\documentclass[12pt,a4paper]{amsart}

\title[Parabolic Normalizers as Subdirect Products]{Parabolic Normalizers in Finite Coxeter Groups as Subdirect Products}

\dedicatory{Dedicated to Jean-Pierre Serre on the occasion of his 99th birthday
with great admiration.}

\author[J.M. Douglass]{J. Matthew Douglass}
\address{Division of Mathematical Sciences,
  National Science Foundation,
  2415 Eisenhower Ave,
  Alexandria, VA 22314, USA}
\email{mdouglas@nsf.gov}

\author[G. Pfeiffer]{G\"otz Pfeiffer}
\address{School of Mathematical and Statistical Sciences,
  University of Galway,
  Galway, Ireland}
\email{goetz.pfeiffer@universityofgalway.ie}

\author[G. R\"ohrle]{Gerhard R\"ohrle}
\address {Fakult\"at f\"ur Mathematik,
  Ruhr-Universit\"at Bochum,
  D-44780 Bochum, Germany}
\email{gerhard.roehrle@rub.de}

\thanks{The authors would like to thank their charming wives for their
  unwavering support during the preparation of this paper.}

\subjclass[2020]{Primary 20F55, 06A15}


\renewcommand{\MR}[1]{}

\usepackage[margin=1in]{geometry}
\usepackage{amssymb}  
\let\emptyset\varnothing
\usepackage[colorlinks=true]{hyperref}
\usepackage{tikz}
\usepackage{booktabs}
\setcounter{tocdepth}{1}

\numberwithin{equation}{section}

\newtheorem{theorem}[subsection]{Theorem}
\newtheorem{proposition}[subsection]{Proposition}
\newtheorem{lemma}[subsection]{Lemma}

\theoremstyle{remark}
\newtheorem{remark}[subsection]{Remark}

\theoremstyle{definition}
\newtheorem{definition}[subsection]{Definition}
\newtheorem{notation}[subsection]{Notation}
\newtheorem{example}[subsection]{Example}

\newcommand{\C}{\mathbb{C}}
\newcommand{\R}{\mathbb{R}}

\let\AA\relax
\newcommand{\AA}{\mathcal{A}}
\newcommand{\BB}{\mathcal{B}}
\newcommand{\CC}{\mathcal{C}}
\newcommand{\FF}{\mathcal{F}}
\newcommand{\GG}{\mathcal{G}}
\newcommand{\OO}{\mathcal{O}}

\newcommand{\galois}{orthogonal}
\newcommand{\galoisy}{orthogonally}
\newcommand{\Galois}{Orthogonal}

\newcommand{\mathcox}{\mathbf}
\newcommand{\cA}{\mathcox{A}}
\newcommand{\cB}{\mathcox{B}}

\newcommand{\cD}{\mathcox{D}}
\newcommand{\cE}{\mathcox{E}}
\newcommand{\cF}{\mathcox{F}}
\newcommand{\cG}{\mathcox{G}}
\newcommand{\cH}{\mathcox{H}}
\newcommand{\cI}{\mathcox{I}}

\newcommand{\cc}[1]{{#1}^{\dagger}}
\newcommand{\ccc}[1]{{#1}^{\dagger\dagger}}

\newcommand{\Sym}{\mathfrak{S}}

\newcommand{\Fix}{\mathrm{Fix}}

\newcommand{\GL}{\mathrm{GL}}
\newcommand{\GO}{\mathrm{O}}

\newcommand{\size}[1]{\lvert #1 \rvert}
\newcommand{\Span}[1]{\langle #1 \rangle}

\newcommand{\CPair}[2]{{#1 \perp #2}}

\begin{document}

\begin{abstract}
  We revisit the structure of the normalizer $N_W(P)$ of a parabolic
  subgroup $P$ in a finite Coxeter group $W$, originally described by
  Howlett. Building on Howlett’s Lemma, which provides canonical
  complements for reflection subgroups, and inspired by a recent
  construction of Serre for involution centralizers, we refine this
  understanding by interpreting $N_W(P)$ as a \emph{subdirect product}
  via Goursat’s Lemma. Central to our approach is a Galois connection
  on the lattice of parabolic subgroups, which leads to a new
  decomposition
  \begin{align*}
    N_W(P) \cong (P \times Q) \rtimes ((A \times B) \rtimes C)\text,
  \end{align*}
  where each subgroup reflects a structural feature of the ambient
  Coxeter system. This perspective yields a more symmetric description
  of $N_W(P)$, organized around naturally associated reflection
  subgroups on mutually orthogonal subspaces of the reflection representation of~$W$. Our analysis provides
  new conceptual clarity and includes a case-by-case classification
  for all irreducible finite Coxeter groups.
\end{abstract}

\maketitle


\section{Introduction}
\label{sec:introduction}

Let $V$ be a Euclidean vector space of finite dimension $n$.  A finite
Coxeter group $W$ on~$V$ is a finite subgroup of $\GL(V)$ generated by
reflections, which we may assume to be a subgroup of the orthogonal
group $\GO(V)$.  The pointwise stabilizer $P$ in $W$ of a subset of
$V$ is called a \emph{parabolic subgroup} of $W$.  Suppose $P$ is a
parabolic subgroup and that $X$ is the fixed point space of $P$ in $V$
with orthogonal complement $X^{\perp}$.  Then there is an orthogonal
decomposition
$V \cong X \oplus X^{\perp}$,
that is stabilized by both $P$ and its normalizer $N_W(P)$ in $W$.
Thus $N_W(P)$ is isomorphic to a subgroup of the direct product
$\GO(X) \times \GO(X^{\perp})$.

Goursat's Lemma (Proposition~\ref{pro:goursat}) identifies subgroups
of a direct product of groups $G_1 \times G_2$ with the graphs of isomorphisms
between sections of $G_1$ and $G_2$.  We use this point of view
to describe the structure of  $N_W(P)$ as a subgroup of
$\GO(X) \times \GO(X^{\perp})$,
and to associate
certain naturally arising reflection groups on $X$ and on $X^{\perp}$ to $P$.
The basic approach can be illustrated with a simple example.
\begin{example}\label{ex:sym-simple}
  Take $V = \R^{10} = \sum_{i=1}^{10} \R e_i$ and let
  $W \cong \Sym_{10}$ be the subgroup of permutation matrices in the
  orthogonal group $\GO_{10}(\R)$.  For $1 \leq i \leq 9$ let $s_i$ be
  the permutation matrix in $W$ that interchanges the basis vectors
  $e_i$ and $e_{i+1}$, and fixes $e_k$ for $k \neq i, i+1$.  Define
  $P$ to be the parabolic subgroup of $W$ that fixes $e_1, \dots, e_4$
  and the three vectors
  $v_1 = e_5 + e_6$, $v_2 = e_7 + e_8$, and $v_3 = e_9 + e_{10}$.
  Then $P = \Span{s_5,s_7,s_9} \cong \Sym_2^3$, and $P$ acts as $-1$
  on the three vectors $\alpha_1 = e_5 - e_6$, $\alpha_2 = e_7 - e_8$,
  and $\alpha_1 = e_9 - e_{10}$.  We can describe the normalizer
  $N_W(P)$ as follows.

  The fixed point space $X$ of $P$ is the span of
  $\{e_1, e_2, e_3, e_4, v_1, v_2, v_3\}$.  Its orthogonal complement
  $X^{\perp}$ is the span of $\{\alpha_1, \alpha_2, \alpha_3\}$.  The
  pointwise stabilizer of $X^{\perp}$ is the parabolic subgroup
  $Q = \Span{s_1, s_2, s_3}$ of $W$, and the product $PQ$ is a
  subgroup of $W$, isomorphic to the direct product $P \times Q$. As
  $PQ$ is a normal subgroup of $N_W(P)$ generated by reflections, by
  Howlett's Lemma (Proposition~\ref{pro:howlett}) it has a complement $D$ in $N_W(P)$.  It
  is straightforward to check that $D = \Span{t_1, t_2}$ for
  $t_1 = s_6 s_5 s_7 s_6$ and $t_2 = s_8 s_7 s_9 s_8$ is such a
  complement, whence $N_W(P) \cong (P \times Q) \rtimes D$.

  The group $D$, as well as the actions of $P$, $Q$ and $D$ on the
  subspaces $X$ and $X^{\perp}$ of $V$, can also be derived from an
  application of Goursat's Lemma (Proposition~\ref{pro:goursat}) to
  the embedding of $N_W(P)$ in
  $\GO(X) \times \GO(X^{\perp}) \cong \GO_7(\R) \times \GO_3(\R)$:
  \begin{itemize}
  \item $P$ fixes $X$ and acts faithfully on $X^{\perp}$ as a
    reflection group isomorphic to $\Sym_2^3$.
  \item $Q$ fixes $X^{\perp}$ and acts faithfully on $X$ as a
    reflection group isomorphic to $\Sym_4$.
  \item $D$ acts on both $X$ and $X^{\perp}$ as a reflection group
    isomorphic to $\Sym_3$ and the image of $D$ in
    $\GO_7(\R) \times \GO_3(\R)$ is the graph of an isomorphism
    between the images of $D$ in $\GO_7(\R)$ and in $\GO_3(\R)$.
  \item Moreover, $PD$ acts as a reflection group isomorphic to a
    Coxeter group of type $\cB_3$ on $X^{\perp}$, whereas $QD$ acts as
    a reflection group isomorphic to $\Sym_4 \times \Sym_3$ on $X$.
  \end{itemize}
\end{example}

The example generalizes easily to parabolic subgroups of symmetric
groups (see Section~\ref{sec:A}) and gives a different perspective on
the well-known description of complements of Young subgroups of
symmetric groups in their normalizers.  However, the general case is
more nuanced, and more symmetric than the example suggests.

The pointwise stabilizer $Q$ in $W$ of the subspace $X^{\perp}$
is also a parabolic subgroup, with fixed point space $Y$, say.
The inclusion  $X^{\perp} \leq Y$ might be strict and give rise to
a triple orthogonal decomposition
\begin{align*}
  V \cong Y^{\perp} \oplus (X \cap Y) \oplus X^{\perp}
\end{align*}
of $N_W(P)$-stable subspaces, where
\begin{align*}
  Y^{\perp} \oplus (X \cap Y) \cong X \quad \text{and} \quad
  X^{\perp} \oplus (X \cap Y) \cong Y\text.
\end{align*}
If, as in the example, $D$ is defined as a complement of $PQ$ in $N_W(P)$
then the embedding of the restriction $D|_{X}$ of $D$ to $\GO(X)$ into
$\GO(Y^{\perp}) \times \GO(X \cap Y)$
gives rise to a second application of Goursat's Lemma.

More conceptually, the pair $P$, $Q$ is an instance of a new
$W$-equivariant \emph{Galois connection} between the parabolic
subgroups of $W$.  This is of interest in its own right.
We introduce this Galois connection and some of its attributes in
Section~\ref{sec:galois}.  It gives rise to the new concepts
of \emph{\galois\ complement} and \emph{\galois\ closure}
of a parabolic subgroup, as introduced and explained in
Section~\ref{sec:c-closure}.

Howlett~\cite{Howlett80} has described the normalizer of
the parabolic subgroup $P$ of $W$ as an iterated semidirect product of the form
\begin{align*}
  N_W(P) \cong P \rtimes (W_0 \rtimes (W_1 \rtimes W_2)),
\end{align*}
where the subgroup $W_0W_1$ acts as a reflection group on $X$.
A fundamental tool used in both~\cite{Howlett80}  and in this paper  is
what we call Howlett's Lemma
(Proposition~\ref{pro:howlett}), which for any real reflection group
identifies a canonical complement in its normalizer in a larger group.
Brink and Howlett~\cite{BrHo1999} have later
identified this complement as a subgroup of a groupoid constructed
from~$W$.

Here we describe this normalizer as a product of the form
\begin{align}\label{eq:decompose}
  N  \cong (P \times Q) \rtimes ((A \times B) \rtimes C),
\end{align}
where $Q$, the stabilizer of $X^{\perp}$ from above, coincides with
Howlett's $W_0$, and $A$, $B$, $C$ are subgroups of the Howlett
complement of $PQ$ in $N$ which tend to act as reflection
groups on the subspaces $X^{\perp}$, $X \cap Y$ and $Y^{\perp}$
of~$V$.  We describe this decomposition and its associated actions in
detail, case by case, using the new concepts
of \emph{\galois\ complement} and \emph{\galois\ closure}.
Our results on the structure of the
normalizer of a parabolic subgroup can be summarized as follows,
denoting by $\overline{H}$ the \emph{parabolic closure}
(Definition~\ref{def:para-clos}) of a subgroup $H$ of $W$.

\begin{theorem}\label{thm:main}
  Let $W$ be an irreducible finite Coxeter group and let $P \leq W$
  be a parabolic subgroup with normalizer $N = N_W(P)$.  Then $N$ has
  a product decomposition~\eqref{eq:decompose} where the factors are as follows:
  \begin{enumerate}
  \item[(i)] $Q$ is the \galois\ complement of $P$ in $W$.
  \item[(ii)] $A$ is the Howlett
    complement of $P$ in its \galois\ closure $\ccc{P}$ acting as graph
    automorphisms on $P$.
  \item[(iii)] $B$ is the Howlett complement of the
    reflection subgroup $PQ$ in $N \cap \overline{PQ}$,
    acting as graph automorphisms on $P$ and $Q$.
  \item[(iv)] $C$ is trivial, unless the pair $(W, P)$ is one of
    $(\cD_n, P)$ or $(\cE_7, P)$, where $P$ is a direct product of
    Coxeter groups of type $\cA_{k_j}$, with $\sum_j (k_j+1) \leq n-2$
    and at least one index $k_j$ even, or $(\cE_8, \cA_{4}\cA_{1})$.
  \end{enumerate}
\end{theorem}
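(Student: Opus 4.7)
My plan is to build the decomposition~\eqref{eq:decompose} step by step, combining Howlett's original analysis of $N_W(P)$ with iterated applications of Howlett's Lemma (Proposition~\ref{pro:howlett}) and the \galois\ Galois connection developed in Section~\ref{sec:galois}. The pointwise stabilizer $Q = \cc{P}$ of $X^{\perp}$ is parabolic, and since $P$ and $Q$ act pointwise trivially on each other's fixed subspaces they commute elementwise, so $PQ \cong P \times Q$. As the product of two $N$-stable parabolic subgroups, the reflection subgroup $PQ$ is normal in $N$; Howlett's Lemma then supplies a canonical complement $H$ of $PQ$ in $N$, yielding $N \cong (P \times Q) \rtimes H$ and establishing~(i).

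To decompose $H$ further, I would exploit the triple \galois\ decomposition $V \cong Y^{\perp} \oplus (X \cap Y) \oplus X^{\perp}$ induced by the Galois connection, looking for two independent reflection complements, one on each of $X^{\perp}$ and $X \cap Y$. Applying Howlett's Lemma to the normal inclusion $P \trianglelefteq \ccc{P}$ yields the complement $A$ of~(ii); by the canonicity of the Howlett complement, $A$ preserves the simple system of $P$ and hence acts on $P$ by graph automorphisms. A second application to $PQ \trianglelefteq \overline{PQ}$, where $\overline{PQ}$ denotes the parabolic closure of the reflection subgroup $PQ$, yields the complement $B$ of~(iii), acting by graph automorphisms on both factors. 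Because $A$ and $B$ act nontrivially on disjoint \galoisy\ summands of $X$, they commute and intersect trivially inside $H$, so that $A \times B \leq H$. A final Goursat-type analysis of the embedding of $H$ into $\GO(X) \times \GO(X^{\perp})$, in the spirit of Example~\ref{ex:sym-simple}, then identifies $C$ as a complement of $A \times B$ in $H$ and furnishes the inner factorization $(A \times B) \rtimes C$.

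The main obstacle is part~(iv). Here I would conduct a case-by-case analysis over the irreducible Coxeter types, using the published parabolic normalizer tables for the classical series and direct computation in GAP or CHEVIE for the exceptional types. For the types $\cA_n$, $\cB_n$, $\cI_2(m)$, $\cH_3$, $\cH_4$, $\cF_4$, and $\cE_6$, the combinatorial description of parabolic normalizers should show directly that $|H| = |A|\cdot|B|$, forcing $C = 1$. The $\cD_n$ exceptions arise from the order-two outer diagram automorphism: when $P$ has an $\cA$-component of even rank and the orthogonal slice $Y^{\perp}$ is wide enough to accommodate an additional involution (quantitatively, $\sum_j(k_j+1) \leq n-2$ with at least one $k_j$ even), this symmetry survives as a nontrivial generator of $C$. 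The residual exceptions in $\cE_7$ and the single anomaly $(\cE_8, \cA_4\cA_1)$ must then be pinned down by explicit enumeration of parabolic classes. I expect the real difficulty to lie not in any individual verification but in organizing the analysis so that the pattern of exceptions emerges uniformly from the interplay between the diagram automorphisms and the triple \galois\ decomposition.
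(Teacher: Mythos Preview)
Your overall strategy matches the paper's: obtain $N \cong (P\times Q)\rtimes D$ via Howlett's Lemma applied to the normal reflection subgroup $PQ$, then split $D$ further using the triple decomposition $V = X^{\perp}\oplus(X\cap Y)\oplus Y^{\perp}$, identifying $A$ and $B$ as Howlett complements inside $\ccc{P}$ and $\overline{PQ}$ respectively, and finishing (iv) by a case-by-case check. The paper proceeds in the reverse order---first defining $A$ and $B$ as the Goursat kernels of $D|_X \leq \GO(X\cap Y)\times\GO(Y^{\perp})$, then proving in Propositions~\ref{pro:A} and~\ref{pro:B} that these coincide with the Howlett complements you name---but the two routes are equivalent.

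Two points need tightening. First, the ``final Goursat-type analysis'' should not be on the embedding $H\hookrightarrow\GO(X)\times\GO(X^{\perp})$: that one was already used (implicitly) to get $P$ and $Q$ as kernels and to see that $D$ acts faithfully on $X$. The relevant second application is to the restriction $D|_X \leq \GO(X\cap Y)\times\GO(Y^{\perp})$; this is what produces $A$ and $B$ as normal subgroups of $D$ with $A\cap B = 1$, and yields the quotient $D/(A\times B)$. Second, Goursat only hands you this quotient, not a complement. That $A\times B$ actually splits off---i.e., that a subgroup $C$ with $D = (A\times B)\rtimes C$ exists---is not automatic; in the paper this is Proposition~\ref{pro:C}, and its proof is precisely the case-by-case analysis you outline for (iv), where one finds $|D/(A\times B)|\leq 2$ in every case and exhibits an explicit involution when it equals $2$. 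So your planned case analysis does double duty: it both classifies when $C$ is nontrivial and certifies that $C$ is a genuine complement.
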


\begin{proof}
  (i)  Using Goursat's Lemma and Howlett's Lemma, it is shown in Section~\ref{sec:normalizers}
  that there are subgroups $Q$ and $D$ of $N$ such that $N = (P \times Q) \rtimes D$, where $Q$ is the \galois\  complement of $P$ in $W$.  By a second application of Goursat's Lemma, it is shown in Section~\ref{sec:refine} that there are subgroups $A$ and $B$ of $D$ such that
  $A \times B$ is normal in $D$ and (ii)
   is shown in Proposition~\ref{pro:A}.  Part
  (iii) is shown in Proposition~\ref{pro:B},
  and (iv) is shown in Proposition~\ref{pro:C}.
\end{proof}

An alternative formulation and proof are as follows:
\begin{theorem}\label{thm:main1}
  Let $W$ be an irreducible finite Coxeter group and let $P$
  be a parabolic subgroup of $W$ with normalizer $N = N_W(P)$.
  Let $Q$ be the \galois\ complement of $P$ in~$W$.
  Let $A$ be the Howlett complement of $P$ in its \galois\ closure in~$W$.
  Let $B$ be the Howlett complement of the reflection subgroup $PQ$ in $N \cap \overline{PQ}$.
  Then the following hold.
  \begin{enumerate}
  \item[(i)] The product
$PQAB \cong (P \times Q) \rtimes (A \times B)$
    is a normal subgroup of index at most~$2$ in $N$.
  \item[(ii)] The quotient $N/PQAB$ is trivial, unless the pair $(W, P)$ is one of
    $(\cD_n, P)$ or $(\cE_7, P)$, where $P$ is a direct product of
    Coxeter groups of type $\cA_{k_j}$, with $\sum_j {(k_j+1)} \allowbreak \leq n-2$
    and at least one index $k_j$ even, or $(\cE_8, \cA_{4}\cA_{1})$.
    In each of these cases, $PQAB$ has complement $C$ of order $2$ in $N$.
  \item[(iii)] As $N$-modules,
    \begin{align*}
      V \cong X^{\perp} \oplus (X \cap Y) \oplus Y^{\perp}\text,
    \end{align*}
    where $P$ acts trivially on $Y^{\perp} \oplus (X \cap Y)$,
    $Q$ acts trivially on $X^{\perp} \oplus (X \cap Y)$,
    $A$ acts simultaneously on $X^{\perp}$ and $X \cap Y$, fixing $Y^{\perp}$,
    $B$ acts simultaneously on $X^{\perp}$ and $Y^{\perp}$, fixing $X \cap Y$,
    and $C$ acts simultaneously on each of $X^{\perp}$, $X \cap Y$ and $Y^{\perp}$.
  \end{enumerate}
\end{theorem}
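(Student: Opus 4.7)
The plan is to deduce Theorem~\ref{thm:main1} largely as a repackaging of Theorem~\ref{thm:main}, combined with the geometric decomposition of $V$ set up in the introduction. Since the iterated semidirect product of Theorem~\ref{thm:main} is available, the main tasks are (a) to identify $PQAB$ as the kernel of the projection onto the top factor $C$, (b) to make the index assertion precise, and (c) to read off the action of each factor on the three orthogonal summands $Y^{\perp}$, $X \cap Y$, and $X^{\perp}$ of~$V$.

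For part~(i), I would begin from the decomposition
\[
  N \cong (P \times Q) \rtimes \bigl((A \times B) \rtimes C\bigr)
\]
supplied by Theorem~\ref{thm:main}. The subgroup $(P \times Q) \rtimes (A \times B)$ is normal in the bracketed factor, and being the product of two normal subgroups of $N$ (namely $P \times Q$ and $A \times B$, the latter normal in $(A \times B) \rtimes C$ and stabilized by $P \times Q$), it is normal in $N$. Transporting this along the natural identification shows $PQAB$ is normal in $N$ with quotient $C$, and the internal product has the claimed semidirect product form. The bound on the index then follows once $|C| \leq 2$ is established; this is where I would invoke Proposition~\ref{pro:C}, which in the exceptional cases of part~(iv) of Theorem~\ref{thm:main} produces $C$ as a group of order exactly $2$.

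Part~(ii) is then just a restatement of Theorem~\ref{thm:main}(iv) combined with the order computation for $C$ from Proposition~\ref{pro:C}. For part~(iii), I would work from the geometric setup: $X$ is the fixed space of $P$, so $P$ acts trivially on $X = Y^{\perp} \oplus (X \cap Y)$; by definition $Q$ pointwise fixes $X^{\perp}$ and has fixed space $Y$, whence $X^{\perp} \leq Y$ and $Q$ acts trivially on $Y = (X \cap Y) \oplus X^{\perp}$. Since $Y^{\perp}$, $X \cap Y$, $X^{\perp}$ are pairwise orthogonal and each is preserved by $N$ (being a sum of isotypic components under the decomposition $V = X \oplus X^{\perp} = Y^{\perp} \oplus Y$, both of which are $N$-stable), the three-piece decomposition is an $N$-module decomposition. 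The action statements for $A$, $B$, $C$ are exactly the content of how the two successive applications of Goursat's Lemma partition the Howlett complement across these summands, which I would cite from Propositions~\ref{pro:A}, \ref{pro:B}, and~\ref{pro:C}.

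The hard part is not the present theorem but the groundwork it relies on: establishing that $C$ is either trivial or of order $2$, and pinning down precisely the pairs $(W,P)$ where the nontrivial case occurs, requires the case-by-case analysis that underlies Theorem~\ref{thm:main}. Once Propositions~\ref{pro:A}--\ref{pro:C} are in hand, Theorem~\ref{thm:main1} reduces to a formal rearrangement of the semidirect product together with transparent geometric book-keeping on the three orthogonal summands.
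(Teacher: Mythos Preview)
Your overall strategy---deriving Theorem~\ref{thm:main1} from Theorem~\ref{thm:main} rather than rebuilding from scratch---is legitimate, though it differs from the paper's route. The paper proves Theorem~\ref{thm:main1} by applying Goursat's Lemma twice (first to $V = X \oplus X^{\perp}$ to produce $P$, $Q$, $D$; then to $X = (X \cap Y) \oplus Y^{\perp}$ to produce the normal subgroups $D_1$, $D_2$ of $D$), and then identifying $A = D_1$, $B = D_2$ via Propositions~\ref{pro:A} and~\ref{pro:B}, with Proposition~\ref{pro:C} supplying the complement $C$. Your approach treats Theorem~\ref{thm:main} as a black box and reads off the structure; this is shorter but less self-contained, and hides the point that $A$ and $B$ arise \emph{as Goursat kernels}, which is exactly what makes the action statements in part~(iii) immediate.

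There is, however, a genuine slip in your argument for part~(i). You assert that $PQAB$ is normal in $N$ because it is ``the product of two normal subgroups of $N$ (namely $P \times Q$ and $A \times B$, the latter normal in $(A \times B) \rtimes C$ and stabilized by $P \times Q$)''. But $A \times B$ is \emph{not} a normal subgroup of $N$ in general: in the semidirect product $N = (P \times Q) \rtimes D$, conjugation by an element $p \in P \times Q$ sends $d \in D$ to $pdp^{-1} = d \cdot (d^{-1}pd)p^{-1}$, which lies in $d(P \times Q)$, not in $D$, unless $d$ happens to centralize $p$. So $P \times Q$ does not normalize $A \times B$. The correct argument is simpler: $PQAB$ is the full preimage of $A \times B \trianglelefteq D$ under the quotient map $N \to N/(P \times Q) \cong D$, and preimages of normal subgroups are normal. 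This also makes the index claim transparent, since $[N : PQAB] = [D : A \times B] = |C|$.

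For part~(iii), citing Propositions~\ref{pro:A}--\ref{pro:C} is the right idea, but the content you need is really in the \emph{construction} of $A$ and $B$ (Section~\ref{sec:refine}): $A$ is defined as the preimage of $Z_{D_X}(Y^{\perp})$ and $B$ as the preimage of $Z_{D_X}(X \cap Y)$, which is precisely what forces $A$ to fix $Y^{\perp}$ and $B$ to fix $X \cap Y$. The propositions identify these Goursat kernels with the Howlett complements named in the hypothesis, closing the loop.
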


\begin{proof}
  Goursat's Lemma, applied to $V \cong X \oplus X^{\perp}$,
  shows that $N$ has normal subgroups $N_1$, $N_2$ such that $N_1N_2 \cong N_1 \times N_2$
  with quotient group $N/N_1N_2$ acting isomorphically on $X$ and $X^{\perp}$.
  We show in Section~\ref{sec:goursat} that $P = N_1$, $Q = N_2$ and that $PQ$
  has a complement $D$ in $N$, so that $N \cong (P \times Q) \rtimes D$.

  Furthermore, Goursat's Lemma, applied to $X \cong (X \cap Y) \oplus Y^{\perp}$
  shows that $D$ has normal subgroups $D_1$ and $D_2$ such that
  $D_1 D_2 \cong D_1 \times D_2$
  with quotient $D/D_1D_2$ acting isomorphically on $X \cap Y$ and $Y^{\perp}$.
  We show in Propositions~\ref{pro:A} and \ref{pro:B} that $A = D_1$ and $B = D_2$.

  Finally, Proposition~\ref{pro:C} establishes that the normal subgroup $AB$
  has a complement $C$ of order at most $2$ in~$D$.
\end{proof}

We say more about the subgroups $A$, $B$ and $C$, respectively, in
Sections~\ref{sec:A}, \ref{sec:B} and ~\ref{sec:C}, respectively.

\subsection*{Involution Centralizers.}
This project was motivated by recent work of Serre~\cite{Serre2023},
discussing the special case where $N$ is the centralizer of an
involution $u \in W$.
Following ~\cite{Serre2023}, we say that an involution $u \in
\GO(V)$ has \emph{degree} $d$ if its $(-1)$-eigenspace in $V$ has
dimension $d$.  According to ~\cite{Serre2023}, any involution centralizer
in $W$ is generated by involutions of degrees~$1$ and~$2$.
In a finite Coxeter group $W$ with root system
$\Phi$, the centralizer of an involution $u \in W$ coincides with the
normalizer of the parabolic subgroup that has root system
$\{\alpha \in \Phi  \mid  \alpha.u = -\alpha\}$ (see~\cite{Richardson82}).
We revisit involution centralizers in Section~\ref{sec:involution}.

\subsection*{Complex Reflection Groups.}
The broader class of complex reflection groups shares many structural
features with the real reflection groups, that is, the finite Coxeter
groups.  However, complex reflection groups do not possess simple
reflections, root systems, or canonical Coxeter presentations.  While
parabolic subgroups still exist and both Goursat’s Lemma and the
Galois connection remain applicable, Howlett’s Lemma does not extend
to the complex case.

As a result, obtaining a structural description of the normalizer of a
parabolic subgroup in a complex reflection group is more subtle and
requires new methods.  Muraleedaran and Taylor~\cite{MuTa18} have
shown case by case that each parabolic subgroup of a complex
reflection group has a complement in its normalizer, using some ad hoc
constructions where needed.  Developing a finer structural description
of the normalizer, analogous in spirit to the one presented here for
finite Coxeter groups, will be the subject of future work.

\subsection*{Overview.}
The remainder of  this paper is organized as follows.
In Section~\ref{sec:howlett} we recall useful properties of finite Coxeter
groups and their parabolic subgroups.  This includes in particular Howlett's
construction (Proposition~\ref{pro:howlett})
of a canonical complement of a reflection
subgroup in its normalizer.
In Section~\ref{sec:galois} we introduce and study a Galois
connection on the set of parabolic subgroups of $W$.  This connection
describes the subgroup $Q$ as a Galois complement of $P$ which we call
the \galois\ complement of $P$.  We give a full description of the
resulting closure operation on the conjugacy classes of parabolic
subgroups of~$W$ in Theorem~\ref{thm:cpairs}.
In Section~\ref{sec:goursat} we recall Goursat's Lemma
(Proposition~\ref{pro:goursat}) on the structure of subgroups of a
direct product and describe the normalizer $N$ as such a subdirect
product, following Serre~\cite{Serre2023}.  This in particular yields a
description of the Howlett complement $D$ of the reflection subgroup
$PQ$ of~$W$.
Section~\ref{sec:A} is concerned with the \galois\ closure of the parabolic
subgroup $P$ and characterizes the subgroup~$A$ of~$D$ as the Howlett
complement of $P$ in its \galois\ closure (Proposition~\ref{pro:A}).  Using
this subgroup $A$, we describe all parabolic normalizers in a Coxeter
group of type $\cA_n$ in Proposition~\ref{pro:An}.
Section~\ref{sec:B} is concerned with the parabolic closure $\overline{PQ}$
of the reflection subgroup $PQ$ in $W$.  Here we characterize the
subgroup~$B$ of~$D$ as the Howlett complement of $PQ$ in $N \cap \overline{PQ}$
(Proposition~\ref{pro:B}).  This allows us to
describe all parabolic normalizers in a Coxeter group of type $\cB_n$ in
Proposition~\ref{pro:Bn}.
Section~\ref{sec:C} is concerned with the third subgroup~$C$ of~$D$
and the circumstances when it is non-trivial.  This results in
a description of all parabolic normalizers in a Coxeter group of type $\cD_n$
(Propositions~\ref{pro:D1}, \ref{pro:D2} and \ref{pro:D3}).
In Section~\ref{sec:afterthoughts} we summarize and discuss the findings.
Section~\ref{sec:tables} contains tables of detailed results for
the exceptional finite Coxeter groups.

\section{Parabolic Subgroups and Howlett Complements} 
\label{sec:howlett}

Here we collect some useful properties of parabolic subgroups of a
real reflection group~$W$.  In particular, Howlett's Lemma
(Proposition~\ref{pro:howlett}) shows that a parabolic subgroup has a
complement in its normalizer in $W$.

\subsection{Parabolic Subgroups.}
Let $(W, S)$ be a Coxeter system such that $W$ is a finite Coxeter
group, acting as a reflection group on Euclidean space $V = \R^n$.
For any subspace $X \leq V$, the (pointwise) stabilizer
$Z_W(X) = \{w \in W \mid x.w = x \text{ for all } x \in X\}$ is called
a \emph{parabolic subgroup} of~$W$.  By a theorem of Steinberg~\cite{Steinberg64}, each
parabolic subgroup $P$ of $W$ is itself a reflection group.

The Coxeter system $(W, S)$ is \emph{reducible}, if $S = K \cup L$ for
non-empty, disjoint subsets $K, L \subseteq S$, such that $st = ts$
for all $s \in K$, $t \in L$; it is \emph{irreducible}, if it is not
reducible.  By their classification
(see~\cite{Bourbaki68} or~\cite[Chapter 2]{Humphreys90}), an
irreducible finite Coxeter system is of type $\cA_n$ ($n \geq 0$),
$\cB_n$ ($n \geq 2$), $\cD_n$ ($n \geq 4$), or of one of the
\emph{exceptional types} $\cE_6$, $\cE_7$, $\cE_8$, $\cF_4$, $\cH_3$,
$\cH_4$, or $\cI_2(m)$ ($m \geq 5$).

\subsection{Reflection Subgroups.}
More generally, any subgroup of $W$ generated by reflections is called
a \emph{reflection subgroup} of $W$.   Hence every parabolic subgroup of
$W$ is a reflection subgroup, but a reflection subgroup is a
parabolic subgroup only if it is the pointwise stabilizer of its fixed point
subspace in~$V$.

\subsection{Shapes.}
The \emph{shape} of a parabolic subgroup $P$ of~$W$ is
its conjugacy class
\begin{align*}
  [P] = \{P^x \mid x \in W\}
\end{align*}
under the conjugation action of $W$ on its subgroups.
We write $\Lambda(W)$
for the set of all shapes of~$W$.

For the exceptional types, there are explicit lists of (up to $41$)
shapes (e.g.~in \cite[Appendix A]{GePf2000}).  For the classical types $\cA_{n-1}$, $\cB_n$ and
$\cD_n$, the shapes can be parametrized by partitions as follows.  For a
partition $\lambda = (\lambda_1, \lambda_2, \dots, \lambda_k)$ of
$m \leq n$ denote by $W_{\lambda}$ a Coxeter group of type
$\cA_{\lambda_1-1} \times \cA_{\lambda_2-1} \times \dots \times
\cA_{\lambda_k-1}$.  Here we write $\lambda \vdash m$, assuming that
\begin{align*}
  \lambda_1 \geq \lambda_2 \geq \dots \geq \lambda_k > 0 \quad \text{and} \quad
  m = \lambda_1 + \lambda_2 + \dots + \lambda_k\text.
\end{align*}
Note that, for
$l = 1$, the Coxeter group of type $\cA_{l-1} = \cA_0$ is the trivial
group.

\begin{proposition}\label{pro:shape-transversal}
  Let $W$ be a finite Coxeter group of classical type.
  A transversal of the set $\Lambda(W)$ of shapes of $W$ is
  \begin{enumerate}
  \item[(i)] $\{W_{\lambda}  \mid  \lambda \vdash n \}$ if $W$ is of type $\cA_{n-1}$;
  \item[(ii)] $\{W(\cB_{n-m}) \times W_{\lambda}  \mid  \lambda \vdash m,\,0\leq m \leq n\}$
    if $W$ is of type $\cB_n$;
  \item[(iii)] $\{W(\cD_{n-m}) \times W_{\lambda}  \mid  \lambda \vdash m,\, m \neq n-1\}^{\pm}$ if $W$ is of type $\cD_n$,
    where the superscript \textup{`$\pm$'}
    indicates that, for every even partition $\lambda \vdash n$, the
  set contains in fact two labels, $\lambda^{+}$ and
  $\lambda^{-}$.
  \end{enumerate}
\end{proposition}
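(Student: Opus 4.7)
My plan is to reduce the classification of shapes to that of the $W$-orbits on the standard parabolic subgroups $W_J$ for $J \subseteq S$, using the basic fact that every parabolic subgroup of a finite Coxeter group is $W$-conjugate to some $W_J$. Within each irreducible classical type, the Coxeter graph is simple enough that the induced subgraph on $J$, together with a brief analysis of the ambient conjugation action, will determine the $W$-class of $W_J$ completely.

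For part (i), the Coxeter graph of $\cA_{n-1}$ is a single path, so $J \subseteq S$ breaks into intervals whose lengths (increased by one) constitute a composition of~$n$. An inner automorphism of $\Sym_n$ can reorder these intervals, so the $W$-class is captured by the associated partition $\lambda \vdash n$, and distinct partitions clearly give non-conjugate Young subgroups. For part (ii), the graph of $\cB_n$ is a path with a distinguished end vertex~$s_0$: the connected component of $J$ containing $s_0$ (possibly empty) is of type $\cB_{n-m}$, and the remaining components together with the singletons in $S \setminus J$ encode a partition $\lambda \vdash m$. Conjugacy inside $W(\cB_n) = (\pm 1)^n \rtimes \Sym_n$ permutes coordinates freely, so the pair $(n-m, \lambda)$ is a complete invariant.

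For part (iii), the same blueprint applies to $\cD_n$, but two features of its Coxeter graph create extra nuance. The branch at $\alpha_{n-2}$ supporting both $\alpha_{n-1}$ and $\alpha_n$ means that a standard parabolic $W_J$ acquires a $\cD_{n-m}$-component with $n-m \geq 2$ only when $J$ contains $\alpha_{n-2}$ together with both branches; if the would-be $\cD$-component has rank one it is abstractly trivial and the resulting shape coincides with one already obtained from the $m=n$ case, which accounts for the exclusion $m \neq n-1$. More importantly, the diagram automorphism swapping the two branch generators is realised by an element of $W(\cB_n) \setminus W(\cD_n)$ acting by a single sign flip, so two standard parabolics related by this swap may be $W(\cB_n)$-conjugate without being $W(\cD_n)$-conjugate.

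The main obstacle is to identify precisely when the swap above yields a genuinely new $W(\cD_n)$-orbit. I expect to show that in the purely $\cA$-type situation (the case $m=n$), an odd-length block of $\lambda$ contains reflections that internally exchange the roles of $\alpha_{n-1}$ and $\alpha_n$, so the swap is realised inside $W(\cD_n)$ and produces no extra class; whereas when every part of $\lambda$ is even, no such compensation is available and the swap witnesses a second $W(\cD_n)$-orbit, giving the labels $\lambda^+$ and $\lambda^-$. A final bookkeeping check then confirms that no two pairs $(m, \lambda)$ in the stated list index the same shape, completing the enumeration.
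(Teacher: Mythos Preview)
The paper does not prove this proposition at all: immediately after the statement it simply refers the reader to \cite[Section~2.3]{GePf2000}. Your outline is the standard argument one finds there---reduce to standard parabolics $W_J$, read off the block structure from the Coxeter graph, and in type~$\cD_n$ track the effect of the diagram automorphism---so in spirit you are doing exactly what the cited reference does.

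Two places in your $\cD_n$ discussion would benefit from tightening before this could stand as a proof. First, your description of when a $\cD_{n-m}$-component appears is slightly off: for $n-m=2$ one has $\cD_2=\cA_1\times\cA_1$, which arises when $J$ contains \emph{both} branch nodes but \emph{not} $\alpha_{n-2}$; and the exclusion $m\neq n-1$ is really because $W(\cD_1)$ is the trivial group (the root system $\cD_1$ is empty), so the putative shape $W(\cD_1)\times W_\lambda$ with $\lambda\vdash n-1$ coincides with $W_{\lambda'}$ for $\lambda'=(\lambda,1)\vdash n$. Second, your mechanism for the even/odd split is not quite right as phrased: reflections inside an odd block of $W_\lambda$ do not themselves swap $\alpha_{n-1}$ and $\alpha_n$. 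The clean statement is that $W_\lambda$ and its image under the diagram automorphism are $W(\cD_n)$-conjugate if and only if $N_{W(\cB_n)}(W_\lambda)\not\subseteq W(\cD_n)$, and this happens precisely when some block has odd size, since then the element $y_j$ reversing that block and negating its coordinates changes an odd number of signs. When every part is even, all such normalizing elements lie in $W(\cD_n)$, and the two classes $\lambda^{\pm}$ are genuinely distinct.
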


Here a partition $\lambda$ is \emph{even} if each of its parts $\lambda_i$
is an even integer.  It will be convenient to set
$\cB_0 = \cD_0 = \cA_0$, $\cB_1 = \cA_1$ and
$\cD_2 = \cA_1 \times \cA_1$. For further details and a proof of Proposition~\ref{pro:shape-transversal},
see~\cite[Section~2.3]{GePf2000}.

\subsection{Complements}
The normalizer $N = N_W(P)$ of a parabolic subgroup $P$ in $W$ has the
structure of a semidirect product, $N = P \rtimes H$.  The proof of
the following result yields an explicit description of a canonical
complement~$H$.

\begin{proposition}[Howlett's Lemma~\cite{Howlett80}]\label{pro:howlett}
  Let $G$ be a group of orthogonal automorphisms of a Euclidean space
  $V$ of finite dimension.  Suppose $P$ is a normal reflection
  subgroup of $G$.  Then $P$ has a complement $H$ in~$G$.
\end{proposition}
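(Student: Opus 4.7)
The plan is to construct the complement $H$ as the setwise stabilizer in $G$ of a chosen chamber of $P$. Write $X = \Fix_V(P)$ for the pointwise fixed space of $P$ in $V$ and $Y = X^{\perp}$, so that $V = X \oplus Y$ orthogonally. Since $P$ is a finite group generated by orthogonal reflections, its restriction to $Y$ is essential, and the reflecting hyperplanes of $P$ partition $Y$ into open connected chambers on which $P$ acts \emph{simply transitively}---a classical fact about finite Euclidean reflection groups.

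The key preliminary observation is that $G$ respects this chamber structure. The subspace $X$ is characterised group-theoretically as the common fixed space of $P$, and $P \trianglelefteq G$ forces $X.g = X$ for every $g \in G$: if $x \in X$ and $p \in P$, then $(x.g).p = x.(gpg^{-1}).g = x.g$, since $gpg^{-1} \in P$. Orthogonality of $G$ then gives $Y.g = Y$ as well. Moreover, conjugation by $g$ sends reflections of $P$ to reflections of $P$, and hence permutes their reflecting hyperplanes in $Y$, so $g$ permutes the chambers of $P$.

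Fix now a chamber $C_0 \subseteq Y$ and define
\begin{align*}
  H := \{g \in G \mid C_0.g = C_0\}.
\end{align*}
If $p \in P \cap H$, then $p$ fixes $C_0$, so $p = 1$ by simple transitivity. Given any $g \in G$, the chamber $C_0.g$ equals $C_0.p$ for some $p \in P$, whence $gp^{-1} \in H$ and $g = (gp^{-1})\,p \in HP$. Since $P \trianglelefteq G$ we have $HP = PH$, so $G = PH$ with $P \cap H = \{1\}$, yielding the desired semidirect decomposition $G = P \rtimes H$.

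The main thing to be careful about is the preliminary geometric setup---the $G$-stability of $Y$ and the identification of the chambers of $P$ as subsets of $Y$---so that one may legitimately appeal to simple transitivity of $P$ on chambers. Once this is in place, the entire argument collapses into a single application of orbit-stabilizer to the action of $G$ on the finite set of chambers of $P$, with $H$ appearing as the stabilizer of any chosen base chamber.
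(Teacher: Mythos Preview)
Your proof is correct and takes essentially the same approach as the paper's: the paper defines $H = \{a \in G \mid \Phi^+.a \subseteq \Phi^+\}$ for a chosen positive system $\Phi^+$ of $P$, which is precisely your chamber stabilizer in dual language. Its minimal-length-in-coset argument for $G = PH$ is the root-theoretic translation of your direct appeal to simple transitivity of $P$ on its chambers.
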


\begin{proof}
  Let $\Phi$ be a root system for $P$, with positive roots
  $\Phi^+ \subseteq \Phi$, and negative roots $\Phi^- = - \Phi^+$.
  For $w \in G$, define the \emph{length}
  $\ell(w):= \#\{\alpha \in \Phi^+  \mid  \alpha.w \in \Phi^-\}$.  Let
  \begin{align*}
    H = \{a \in G  \mid  \ell(a) = 0\} = \{a \in G  \mid  \Phi^+.a \subseteq
    \Phi^+\}\text.
  \end{align*}
  Then $PH = P \rtimes H$ is a subgroup of~$G$.  To see that
  $G \leq PH$, let $a \in G$ and let $b \in Pa$ be of minimal length.
  If $\ell(b) > 0$ then there exists a root $\alpha \in \Phi^+$ such
  that $\ell(s_{\alpha} b) < \ell(b)$ in contradiction to the assumed
  minimality of $b$.  Hence $b \in H$ and thus $a \in Pb \subseteq PH$.
\end{proof}

Note that the action of $H$ on $P$ preserves length:
$\ell(w^a) = \ell(w)$ for all $w \in P$, $a \in H$. 

We call the subgroup $H$ constructed in the proof the \emph{Howlett
  complement of $P$ in $G$}.  When $G$ is the normalizer of $P$ in a
subgroup $K$ of $O(V)$, we might also refer to $H$ as the
\emph{Howlett complement of $P$ in $K$}.

\section{A Galois Connection} 
\label{sec:galois}

Using the theory of Formal Concept Analysis~\cite{Wille82} (see
Section~\ref{sec:FCA}), we describe the relationship between the parabolic
subgroups $P$ and $Q$ as a $W$-equivariant Galois connection, which
may be of interest in its own right.

\subsection{Galois Connections}

A \emph{Galois connection} between two posets
$(A, \leq)$ and $(B, \leq)$ is a pair of order-reversing functions
$F \colon A \to B$ and $G \colon B \to A$ such that
$b \leq F(a) \iff a \leq G(b)$, for all $a \in A$ and $b \in B$.
The functions $F$ and $G$ determine each other, as $F(a)$ is the largest
$b$ with $a \leq G(b)$, and vice versa.

It follows from the definition that $F = F \circ G \circ F$ and
$G = G \circ F \circ G$.  Thus, both $F \circ G$ and $G \circ F$ are
monotone idempotent maps with the property that $a \leq G \circ F(a)$ and
$b \leq F \circ G(b)$.  Hence both maps are closure operators, $G \circ F$ on $A$ and
$F \circ G$ on $B$.

If $A = B$ and $F = G$ we simply say that $F$ is a Galois connection
\emph{on}~$A$.

\begin{example}\label{ex:para-clos}
  Let $W$ be a finite Coxeter group, acting as a reflection group on
  Euclidean space $V = \R^n$.  Let $\AA$ be the set of subgroups $U$
  of $W$, and let $\BB$ be the set of subspaces $X$ of $V$, both
  partially ordered by inclusion.  The maps $\FF \colon \AA \to \BB$,
  assigning to each subgroup $U \leq W$ its \emph{fixed point space}
  \begin{align*}
    \FF(U) = \Fix_V(U) = \{v \in V  \mid  v.a = v \text{ for all } a \in U\} \leq V
  \end{align*}
  and $\GG \colon \BB \to \AA$, assigning to each subspace $X$ its
  \emph{pointwise stabilizer}
  \begin{align*}
    \GG(X) = Z_W(X) = \{ a \in W  \mid  x.a = x \text{ for all } x \in X \} \leq W
  \end{align*}
  form a Galois connection. The idempotent
  $\GG \circ \FF \colon \AA \to \AA$ assigns to each subgroup $U$ of
  $W$ the parabolic subgroup $P$ which is its \emph{parabolic
    closure}.  The idempotent $\FF \circ \GG \colon \BB \to \BB$
  assigns to each subspace $X$ of $V$ the \emph{flat} of the
  \emph{lattice} of the reflection arrangement of $W$ which is the
  intersection of all hyperplanes in the arrangement containing $X$.
\end{example}

\begin{example}\label{ex:orth-comp}
  Let $\BB$ be the set of all subspaces of Euclidean space $V = \R^n$.
  Let $\OO \colon \BB \to \BB$ be the map that assigns to each
  subspace $X$ its orthogonal complement $\OO(X) = X^{\perp}$.  Then
  $\OO$ is a Galois connection on $\BB$.  Here the idempotent
  $\OO \circ \OO \colon \BB \to \BB$ is the identity map, and each
  subspace is its own closure.
\end{example}

\subsection{Formal Concept Analysis.}\label{sec:FCA}

Galois connections arise naturally in the theory of \emph{formal
  concept analysis} (FCA) \cite{Wille82}, a branch of applied lattice
theory.  Here a \emph{formal context} is a binary relation $R$
between two finite sets $X$ and $Y$, i.e., a subset $R$ of
$X \times Y$.
Associated to each subset $A \subseteq X$ is its \emph{opposite}
\begin{align*}
\cc{A} = \{y \in Y  \mid  a R y \text{ for all } a \in A\} \subseteq Y\text.
\end{align*}
Likewise, each subset $B \subseteq Y$ has an opposite
\begin{align*}
\cc{B} = \{x \in X \mid x R b \text{ for all } b \in B\} \subseteq X\text.
\end{align*}
It turns out that the pair of maps
$F \colon 2^X \to 2^Y,\, A \mapsto \cc{A}$ and
$G \colon 2^Y \to 2^X,\, B \mapsto \cc{B}$ forms a Galois connection
between the power sets of $X$ and $Y$, and that each Galois connection
between those power sets arises in this way.  It follows that the maps
$G \circ F \colon A \mapsto \ccc{A}$ and
$F \circ G \colon B \mapsto \ccc{B}$ are closure operators on the
power sets of $X$ and $Y$, respectively, and that the closed subsets
of $X$ or $Y$ are exactly those subsets of the form
$\cc{B} \subseteq X$ or $\cc{A} \subseteq Y$.

A pair $(A, B)$ of subsets $A \subseteq X$, $B \subseteq Y$,
is called a \emph{formal concept} if $A$ and $B$ are each other's opposites, i.e., if $(A, B) = (\cc{B}, \cc{A})$.
By the basic theorem of FCA~\cite[Section~2]{Wille82} the formal concepts
of a given formal context $R$ form a complete lattice with respect to
the partial order
\begin{align*}
  (A_1, B_1) \leq (A_2, B_2) \iff A_1 \subseteq A_2 \iff B_2 \subseteq B_1.
\end{align*}
The \emph{meet} is given by
\begin{align*}
  (A_1, B_1) \land (A_2, B_2) = (A_1 \cap A_2,\  \ccc{(B_1 \cup B_2)})
\end{align*}
and there is a dual formula for the join.  In particular, it follows
that the intersection of closed sets is closed.

In the language of graph theory, formal concepts can be described as
follows.  Let $G = (X \sqcup Y, E)$ be a bipartite graph with the
disjoint union of finite sets $X$ and $Y$ as vertices and edge set
$E \subseteq X \times Y$.
Then $E$ is a formal context on $X$ and $Y$.
We say that vertices $x \in X$ and $y \in Y$ are \emph{neighbors}
if $(x, y) \in E$.  For any
set of vertices $A \subseteq X$, the opposite $\cc{A} \subseteq Y$ consists of
the common neighbors of all $x \in A$.  Likewise, for $B \subseteq Y$,
the set $\cc{B} \subseteq X$ is the set of common neighbors of all
$y \in B$.  A pair $(A, B)$ of subsets $A \subseteq X$ and
$B \subseteq Y$ is a formal concept if $A$ and $B$ are each other's
common neighbors.  The closure $\ccc{A}$ of $A \subseteq X$ is the
largest subset of $X$ with the same set $\cc{A}$ of common neighbors
as~$A$.

In fact, the vertex sets $X$ and $Y$ need not be disjoint, as the
following example illustrates.
\begin{example}
  Let $V$ be an inner product space (over $\R$ or $\C$) with inner
  product $\Span{\cdot,\cdot}$ and let $X, Y \subseteq V$ be finite
  subsets. For vectors $x \in X$ and $y \in Y$ write $x \perp y$ if
  $\Span{x, y} = 0$, i.e., if $x$ is orthogonal to $y$.  Then $\perp$
  is a formal context on $X$ and $Y$.  If
  $A^{\perp} = \{v \in V \mid a \perp v \text{ for all } a \in A\}$
  denotes the orthogonal complement in $V$ of a subset $A \subseteq V$
  then for $A \subseteq X$, we have $\cc{A} = A^{\perp} \cap Y$.  Note
  that $\cc{A}$ is linearly closed: if $y \in Y$ lies in the linear
  span of $\cc{A}$ then it already lies in $\cc{A}$.
\end{example}

\subsection{\Galois\ Complements and Closures.}
\label{sec:c-closure}
Let $V$ be a complex vector space and let $\AA$ be a central
arrangement of hyperplanes in $V$.  Each hyperplane $H \in \AA$ is
defined as $H = \ker \alpha_H$ for some linear form
$\alpha_H \colon V \to \C$.  For $H_1, H_2 \in \AA$ with
$H_i = \ker \alpha_i$, write $H_1 \perp H_2$ if
$\alpha_1 \perp \alpha_2$ in the dual space $V^*$.  Then
${\perp} \subseteq \AA \times \AA$ is a formal context.

Let $W \leq \GO_n(\R)$ be a finite reflection group and
$T \subseteq W$ its set of reflections. For a reflection $t \in T$,
denote by $H_t$ its reflecting hyperplane, i.e., $H_t = \ker(t - 1)$.
Then $\AA = \{H_t \mid t \in T\}$ is a hyperplane arrangement: the
reflection arrangement of $W$.  For $s, t \in T$, we write $s \perp t$
if $H_s \perp H_t$.
Note that
$s \perp t \iff st = ts$ and $s \neq t$.
Then ${\perp} \subseteq T \times T$ is a formal
context, and for each subset $S \subseteq T$ we have
$\cc{S} = \{t \in T \mid s \perp t \text{ for all } s \in S\}$.
This motivates the following
definition.

\begin{definition}\label{def:complement}
  Let $W$ and $T$ be as above.
  The
  \emph{\galois\ complement} of a reflection subgroup $U$ of $W$ is the subgroup
  \begin{align*}
    \cc{U} := \Span{\cc{(U \cap T)}} = \Span{t \in T  \mid  t \perp s \text{ for all } s \in U \cap T}\text.
  \end{align*}
\end{definition}

The map $U \mapsto \cc{U}$ then is a Galois connection on the set of
reflection subgroups of $W$ with closure operator
$\ccc{U} = \Span{\ccc{(U \cap T)}}$.  Next we restrict this map to
the set of parabolic subgroups of~$W$.

\begin{proposition}
  Let $U$ be a reflection subgroup of $W$. Then
  \begin{align*}
    \cc{U} = Z_W(\Fix_V(U)^{\perp})\text.
  \end{align*}
\end{proposition}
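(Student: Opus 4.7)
The plan is to show that the two subgroups contain exactly the same reflections from $T$ and that each is generated by those reflections; equality then follows. By definition, $\cc{U}$ is generated by the reflections $t \in T$ with $\alpha_t \perp \alpha_s$ for every $s \in U \cap T$. The subgroup $Z_W(\Fix_V(U)^{\perp})$, being a pointwise stabilizer of a subspace of $V$, is a parabolic subgroup of $W$, and so by Steinberg's theorem it is generated by the reflections it contains.

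The first key step is the linear-algebra identification of $\Fix_V(U)^{\perp}$ with the linear span of the roots $\{\alpha_s \mid s \in U \cap T\}$. Writing $X = \Fix_V(U)$, the fact that $U$ is a reflection subgroup gives $X = \bigcap_{s \in U \cap T} H_s = \bigcap_{s \in U \cap T} \alpha_s^{\perp}$, and taking orthogonal complements converts this intersection of hyperplanes into the span of their normals. Granted this, a reflection $t \in T$ lies in $Z_W(X^{\perp})$ iff $t$ fixes $X^{\perp}$ pointwise, iff (using the elementary fact that a reflection fixes $v$ exactly when $v \perp \alpha_t$) the root $\alpha_t$ is orthogonal to $X^{\perp}$, iff $\alpha_t \in X$, iff $\alpha_t \perp \alpha_s$ for every $s \in U \cap T$. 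Thus $\cc{U}$ and $Z_W(X^{\perp})$ contain the same reflections and are each generated by them, so they coincide.

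The only ingredient lying beyond routine orthogonality bookkeeping is the appeal to Steinberg's theorem, which is what promotes the matched sets of reflections to an equality of subgroups; without it, $Z_W(X^{\perp})$ might a priori be strictly larger than the subgroup generated by its reflections. I anticipate no other serious obstacle, and in particular the argument never requires $U$ itself to be parabolic: it uses $U$ only through its set of reflections $U \cap T$ and the fact that $\Fix_V(U)$ is cut out by the corresponding hyperplanes.
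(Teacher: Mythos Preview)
Your proof is correct and follows essentially the same approach as the paper's: both identify $\Fix_V(U)^{\perp}$ with the span of the roots of the reflections in $U$, show that a reflection $t$ fixes this subspace pointwise exactly when $\alpha_t$ is orthogonal to all those roots, and conclude that the two subgroups contain the same reflections. Your version is in fact more complete, since you make explicit the appeal to Steinberg's theorem needed to pass from ``same reflections'' to ``same subgroup''; the paper leaves this step implicit.
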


\begin{proof}
  Suppose $U = \Span{A}$ for some $A \subseteq T$.  Let $t \in T$.
  Then $t$ centralizes
  \begin{align*}
    \Fix_V(A)^{\perp} = \Span{\alpha_s \mid s \in A}
  \end{align*}
  if and only if $t \perp s $ for all $s \in A$, i.e., if $t \in \cc{A}$.
\end{proof}

\subsection{}
The \galois\ complement $\cc{U}$ of any reflection subgroup $U \leq W$
is therefore a parabolic subgroup, and thus the map
$\CC \colon P \to \cc{P}$ is a Galois connection on the set of
parabolic subgroups of~$W$.  Note that
$\CC = \GG \circ \OO \circ \FF$, in the notation of
Examples~\ref{ex:para-clos} and~\ref{ex:orth-comp}.  For a parabolic
subgroup $P$ of $W$, we call its closure $\ccc{P}$ the
\emph{\galois\ closure} of $P$ in $W$.  We call $P$ \emph{\galoisy\
  closed} if $\ccc{P} = P$.

\begin{proposition}
  A parabolic subgroup $P$ of $W$ is \galoisy\ closed if and only if
  $P = Z_W(R)$ for some subset $R$ of the root system $\Phi$ of $W$.
\end{proposition}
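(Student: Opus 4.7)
The plan is to obtain an explicit formula for $\ccc{P}$ in terms of the root system $\Phi$ and the fixed point space $X = \Fix_V(P)$, from which both implications of the biconditional will fall out at once. Concretely, my aim is to establish
\begin{align*}
\ccc{P} = Z_W(\Phi \cap X).
\end{align*}

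First I would determine the root system of $\cc{P}$. By the preceding proposition, $\cc{P} = Z_W(X^{\perp})$, and the reflections contained in this parabolic subgroup are precisely the $s_{\alpha}$ for which $\alpha$ is orthogonal to every vector of $X^{\perp}$, equivalently $\alpha \in X$. By Steinberg's theorem the parabolic subgroup $\cc{P}$ is generated by its reflections, so the root system of $\cc{P}$ is exactly $\Phi \cap X$. Consequently $\Fix_V(\cc{P}) = (\Span{\Phi \cap X})^{\perp}$, and applying the same proposition with $\cc{P}$ in place of $P$ yields
\begin{align*}
\ccc{P} = Z_W\bigl(\Fix_V(\cc{P})^{\perp}\bigr) = Z_W(\Span{\Phi \cap X}) = Z_W(\Phi \cap X),
\end{align*}
where the last equality uses that the pointwise stabilizer of a set and of its linear span coincide.

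With this formula in hand both directions are immediate. For the forward direction, if $P$ is \galoisy\ closed then $P = \ccc{P} = Z_W(\Phi \cap X)$, so the set $R = \Phi \cap X \subseteq \Phi$ does the job. For the reverse direction, suppose $P = Z_W(R)$ for some $R \subseteq \Phi$. Then every element of $R$ is fixed pointwise by $P$, so $R \subseteq X$ and hence $R \subseteq \Phi \cap X$. Thus $P = Z_W(R) \supseteq Z_W(\Phi \cap X) = \ccc{P}$, and combined with the general closure inequality $P \subseteq \ccc{P}$ (which holds because $\CC \circ \CC$ is a closure operator on the set of parabolic subgroups) this forces $P = \ccc{P}$.

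There is no real obstacle in this argument. The only mildly substantive step is the identification of the reflections of $\cc{P}$, and this rests on the elementary equivalence that for $\alpha \in \Phi$ orthogonality to all of $X^{\perp}$ amounts to $\alpha \in X$, together with Steinberg's theorem identifying a parabolic subgroup with the group generated by its reflections.
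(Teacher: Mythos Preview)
Your proof is correct and rests on the same underlying identity as the paper's, namely that $\cc{U} = Z_W(\text{span of the roots of } U)$ via the preceding proposition. The main difference is organizational: you first extract the explicit formula $\ccc{P} = Z_W(\Phi \cap X)$ and then derive both implications from it, whereas the paper handles the two directions separately. For the reverse implication the paper takes a slightly slicker route by writing $Z_W(R) = \cc{U}$ for $U = \langle s_\alpha : \alpha \in R\rangle$ and invoking the general fact that any image of $\CC$ is closed, rather than your inclusion argument $R \subseteq \Phi \cap X$. Your formula $\ccc{P} = Z_W(\Phi \cap X)$ is a pleasant byproduct not stated explicitly in the paper.
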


\begin{proof}
  Given $R\subseteq \Phi$,
  let $A \subseteq T$ be the set of reflections $s_{\alpha}$
  corresponding to the roots $\alpha \in R$, and let $U = \Span{A}$.
  Then $\cc{U} = Z_W(R)$ is \galoisy\ closed.  Conversely, if
  $P = \ccc{P}$ then $P = Z_W(R)$, where $R \subseteq \Phi$
  is the set of roots of $\cc{P}$.
\end{proof}

\begin{figure}[thb]
  \centering
  \begin{tikzpicture}
[xscale=2.4,yscale=0.8]
\draw (0,0) node[black,draw] (1) {$\emptyset$};
\draw (1,0) node[black,draw] (2) {$\cA_{1}$};
\draw (2,1) node[black,draw] (3) {$\cA_{1}^2$};
\draw (2,-1) node[black,draw] (4) {$\cA_{2}$};
\draw (3,2) node[black] (5) {$\cA_{1}^3$};
\draw (3,0) node[black] (6) {$\cA_{2}\cA_{1}$};
\draw (3,-2) node[black,draw] (7) {$\cA_{3}$};
\draw (4,4) node[black] (8) {$\cA_{2}\cA_{1}^2$};
\draw (4,-4) node[black,draw] (9) {$\cA_{2}^2$};
\draw (4,0) node[black] (10) {$\cA_{3}\cA_{1}$};
\draw (4,-2) node[black] (11) {$\cA_{4}$};
\draw (4,2) node[black] (12) {$\cD_{4}$};
\draw (5,3) node[black] (13) {$\cA_{2}^2\cA_{1}$};
\draw (5,1) node[black] (14) {$\cA_{4}\cA_{1}$};
\draw (5,-3) node[black,draw] (15) {$\cA_{5}$};
\draw (5,-1) node[black] (16) {$\cD_{5}$};
\draw (6,0) node[black,draw] (17) {$\cE_{6}$};
\foreach \s/\t in {2/1, 3/2, 4/2, 5/3, 6/3, 6/4, 7/3, 7/4, 8/5, 8/6, 9/6, 10/5, 10/6, 10/7, 11/6, 11/7, 12/5, 12/7, 13/8, 13/9, 14/8, 14/10, 14/11, 15/9, 15/10, 15/11, 16/8, 16/10, 16/11, 16/12, 17/13, 17/14, 17/15, 17/16}
  \draw[black!30,very thin] (\s) -- (\t);
\foreach \s/\t in {6/9, 5/10, 10/15, 11/15, 8/13, 8/14, 8/16, 12/16, 13/17, 14/17, 16/17}
  \draw[very thick,cyan] (\s) -- (\t);
\end{tikzpicture}
  \caption{\Galois\ closure on the shapes of $W(\cE_6)$.}
  \label{fig:e6}
\end{figure}

\subsection{}
As the map $\CC$ is $W$-equivariant, i.e., $\cc{(P^x)} = (\cc{P})^x$
for any $x \in W$, by abuse of notation it can be regarded as a Galois
connection $\CC \colon \Lambda(W) \to \Lambda(W)$ on the set of shapes
of~$W$, setting $\cc{[P]} := [\cc{P}]$.  In the same
way, $\CC \circ \CC$ is a closure operator on $\Lambda(W)$, where the
closure of the shape $[P]$ is $[\ccc{P}]$.  We
illustrate this closure operator in Figure~\ref{fig:e6} for $W$ of
type $\cE_6$.  The figure shows the Hasse diagram of the partially
ordered set of shapes of parabolic subgroups of~$W$. The \galoisy\
closed shapes are boxed, thick blue lines connect a shape $[P]$ to its
\galois\ closure $[\ccc{P}]$.

\subsection{}
The formal concepts of the Galois connection $\CC$ are pairs $P$, $Q$
of mutually complementary parabolic subgroups of $W$.  We call such a
pair of \galoisy\ closed \galois\ complements a \emph{parabolic
  concept}, denoted by $\CPair{P}{Q}$.  A complete classification of
the parabolic concepts of an irreducible Coxeter group $W$ up to
conjugacy is provided in Theorem~\ref{thm:cpairs}.  Here it may happen
that $[P] = [Q]$.

\begin{theorem}\label{thm:cpairs}
  Let $W$ be an irreducible finite Coxeter group.  Then depending on the type of $W$, its parabolic concepts are as follows:
  \begin{center}
    \begin{tabular}{c|p{12cm}}
      Type & $\CPair{P}{Q}$ or $\CPair{Q}{P}$ \\ \hline
      $\cA_n$
           &     $\CPair{\cA_n}{\emptyset}$ and
             $\CPair{\cA_{m}}{\cA_{l}}$, where $n = m + l + 1$, $m, l > 0$.
      \\
      $\cB_n$
           &    $\CPair{\cB_{m}\cA_1^{k}}{\cB_{l}\cA_1^{k}}$, where ${n = m  + l + 2k}$, $m, l, k \geq 0$.
      \\
      $\cD_n$
           &
             $\CPair{\cD_{m}\cA_1^{k}}{\cD_{l}\cA_1^{k}}$, where
     $n = m + l + 2k$, $m, l \neq 1$, $k \geq 0$, and
     $\begin{cases}
     \CPair{\cA_1^k}{\cA_1^k}\text, & \text{if $n = 2k+1$;} \\
     \CPair{(\cA_1^{k})^{+}}{(\cA_1^{k})^{+}},\, \CPair{(\cA_1^{k})^{-}}{(\cA_1^{k})^{-}}\text, & \text{if $n = 2k$ and $k$ is even;} \\
     \CPair{(\cA_1^{k})^{+}}{(\cA_1^{k})^{-}}\text, & \text{if $n = 2k$ and $k$ is odd.}
     \end{cases}$
      \\
      $\cI_2(m)$
      & $\CPair{\cI_2(m)}{\emptyset}$ and, if $m$ is even,        $\CPair{\cA_{1}'}{\cA_{1}'}$,
       $\CPair{\cA_{1}''}{\cA_{1}''}$.
    \end{tabular}
  \end{center}
  For the exceptional types, the parabolic concepts of $W$ are listed in
  Table~\ref{tab:cpairs}.
\end{theorem}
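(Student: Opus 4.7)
The plan is to turn the classification into a shape-by-shape computation. Since the map $\CC$ is $W$-equivariant, it is enough to identify the orthogonally closed shapes of $W$ and, for each, compute the shape of its Galois complement. By the preceding proposition, a parabolic $P$ is orthogonally closed iff $P = Z_W(R)$ for some $R \subseteq \Phi$, so both the closure $\ccc{P}$ and the complement $\cc{P}$ can be read off directly from the root system, working with the transversal of shapes from Proposition~\ref{pro:shape-transversal}.

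For type $\cA_n$ (so $W = \Sym_{n+1}$), two transpositions are orthogonal iff their supports are disjoint. Hence the Galois complement of a Young subgroup on a given support of size $m$ is the full symmetric group on the complementary $n+1-m$ letters, a single $\cA$-factor; applying $\cc{(\cdot)}$ again recovers the full symmetric group on the original support. Thus the orthogonally closed Young subgroups are exactly the single $\cA$-factors, yielding the pairs $\CPair{\cA_n}{\emptyset}$ and $\CPair{\cA_m}{\cA_l}$ with $m+l+1=n$. For types $\cB_n$ and $\cD_n$, I would work in the coordinate model with roots $\pm e_i,\pm e_i\pm e_j$ (respectively only $\pm e_i\pm e_j$). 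For $P$ of shape $\cB_m\cA_{\lambda_1-1}\cdots\cA_{\lambda_s-1}$ on disjoint coordinate blocks, $\cc{P}$ consists of a $\cB$-factor on the unused coordinates together with, for each $\cA$-part of $P$, the perpendicular roots $e_i+e_j$ to the simple roots $e_i-e_j$ of the block. Iterating this closure enlarges every $\cA_{\lambda_j-1}$ factor with $\lambda_j\geq 3$, so the orthogonally closed parabolics are exactly those of shape $\cB_m\cA_1^k$ (respectively $\cD_m\cA_1^k$ with $m\neq 1$), and the complement of $\cB_m\cA_1^k$ is $\cB_{n-m-2k}\cA_1^k$, as stated.

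For $\cD_n$ with $m=l=0$ and $n=2k$, the shape $\cA_1^k$ splits into the two classes $(\cA_1^k)^{\pm}$ of Proposition~\ref{pro:shape-transversal}. I would track the effect of the complement map, which exchanges $e_i-e_j$ with $e_i+e_j$ in each pair, on the parity of the perfect matching labelling the class: a total sign flip of all $k$ pairs amounts to negating an even number of coordinates iff $k$ is even, so the complement preserves the class when $k$ is even and swaps the two classes when $k$ is odd; this gives the three subcases. For $\cI_2(m)$ the only proper parabolics are the $m$ rank-one subgroups and two reflections are orthogonal iff $m$ is even and their reflecting lines differ by a right angle, which forces $m$ even and gives the dihedral row after tracking $W$-conjugacy on the resulting pairs. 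Finally, for the exceptional types I would verify Table~\ref{tab:cpairs} shape by shape from the Hasse diagram of parabolic subgroups (illustrated for $\cE_6$ in Figure~\ref{fig:e6}), computing $\cc{P}$ and checking $\ccc{P}=P$ using the explicit root systems. The main obstacle is the fine bookkeeping in the $\cD_n$ case with its $\pm$-split, together with the exceptional-type verification, which amounts to a finite but nontrivial case check.
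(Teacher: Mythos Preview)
Your proposal is correct and follows essentially the same strategy as the paper's own proof: compute $\cc{P}$ on shape representatives via the commuting-reflection (equivalently, orthogonal-root) description in the coordinate model, observe that only $\cA_1$-blocks survive the double complement, and read off the resulting pairs. The one real difference is that the paper dispatches the exceptional types by a \textsf{Chevie} computation rather than by hand, and is much terser about the classical cases (in particular it does not spell out the $\cD_n$ parity argument for the $(\cA_1^k)^{\pm}$ split at all); your explicit sign-count there is a genuine addition, though you should tighten it to argue that \emph{every} element of $W(\cB_n)$ carrying $P$ to $\cc{P}$ changes an odd number of signs when $k$ is odd, not just the obvious one.
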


\begin{table}
  \begin{tabular}[t]{c}
    $\cE_6$
    \\ \hline
    $\CPair{\cE_6}{\emptyset}$ \\
    $\CPair{\cA_5}{\cA_1}$ \\
    $\CPair{\cA_{2}^2}{\cA_2}$ \\
    $\CPair{\cA_3}{\cA_{1}^2}$
   \\ \hline
  \end{tabular}
\;
  \begin{tabular}[t]{c}
      $\cE_7$
    \\ \hline
       $\CPair{\cE_7}{\emptyset}$ \\
       $\CPair{\cD_6}{\cA_1}$ \\
       $\CPair{\cA_5'}{\cA_2}$ \\
       $\CPair{\cD_{4}\cA_{1}}{\cA_{1}^2}$ \\
       $\CPair{(\cA_3\cA_1)'}{\cA_3}$ \\
       $\CPair{\cD_4}{(\cA_1^3)'}$ \\
       $\CPair{\cA_{1}^4}{(\cA_1^3)''}$
    \\ \hline
  \end{tabular}
\;
  \begin{tabular}[t]{c}
       $\cE_8$
    \\ \hline
       $\CPair{\cE_8 }{\emptyset}$ \\
       $\CPair{\cE_7}{\cA_{1}}$ \\
       $\CPair{\cE_6}{\cA_{2}}$ \\
       $\CPair{\cD_6}{\cA_{1}^2}$ \\
       $\CPair{\cD_5}{\cA_{3}}$ \\
       $\CPair{\cA_5}{\cA_{2}\cA_{1}}$ \\
       $\CPair{\cD_{4}\cA_{1}}{\cA_1^3}$ \\
       $\CPair{\cD_4}{\cD_4}$ \\ $\CPair{\cA_4}{\cA_4}$ \\ $\CPair{\cA_{3}\cA_{1}}{\cA_{3}\cA_{1}}$ \\ $\CPair{\cA_{2}^2}{\cA_{2}^2}$ \\ $\CPair{\cA_{1}^4}{\cA_{1}^4}$
    \\ \hline
    \mbox{} \\
  \end{tabular}
\;
  \begin{tabular}[t]{c}
     $\cF_4$
    \\ \hline
       $\CPair{\cF_4 }{\emptyset}$ \\
       $\CPair{\cB_3}{\tilde{\cA}_1}$ \\
       $\CPair{\tilde{\cB}_3}{\cA_1}$ \\
       $\CPair{\cA_2}{\tilde{\cA}_2}$ \\
       $\CPair{\cA_{1}^2}{\cA_{1}^2}$ \\
       $\CPair{\cB_2}{\cB_2}$
    \\ \hline
  \end{tabular}
\;
  \begin{tabular}[t]{c}
     $\cH_3$
    \\ \hline
       $\CPair{\cH_3}{\emptyset}$ \\
       $\CPair{\cA_{1}^2}{\cA_1}$
    \\ \hline
    \mbox{} \\
    \mbox{} \\
    \mbox{} \\
     $\cH_4$
    \\ \hline
       $\CPair{\cH_4}{\emptyset}$ \\
       $\CPair{\cH_3}{\cA_1}$ \\
       $\CPair{\cA_{1}^2}{\cA_{1}^2}$ \\
       $\CPair{\cA_2}{\cA_2}$ \\
       $\CPair{\cI_2(5)}{\cI_2(5)}$
    \\ \hline
  \end{tabular}
  \caption{Pairs of \galoisy\ closed parabolics.}\label{tab:cpairs}
\end{table}

\begin{proof}
  In $\Sym_n = W(\cA_{n-1})$, it is easy to see that the only
  transpositions that commute with a parabolic subgroup $W_{\lambda}$,
  where $\lambda = (n^{a_n}, \dots, 1^{a_1}) \vdash n$, are those in
  $\Sym_{a_1}$, permuting the fixed points of $W_{\lambda}$.  The
  argument for the other classical types is similar, keeping in mind
  that the signed transpositions $(i, j)(-i, -j)$ and $(i, -j)(-i, j)$
  commute as well. The exceptional cases have been computed with
  Chevie~\cite{chevie}.
\end{proof}

In Tables~\ref{tab:a7} to \ref{tab:h4} below we use parabolic
concepts to group the shapes of $W$.

\section{Goursat's Isomorphism} 
\label{sec:goursat}

Goursat's Lemma identifies the subgroups of the direct product
$G \times H$ with isomorphisms between sections of the groups $G$ and
$H$.  Here a \emph{section} of a group $G$ is a pair
$(G_1, G_2)$
of subgroups
of $G$ such that $G_2$ is a normal subgroup of $G_1$.  A
\emph{section isomorphism} between a section $(G_1, G_2)$ of $G$ and a section
$(H_1, H_2)$ of $H$ is an isomorphism
\begin{align*}
  \theta \colon G_1/G_2 \to H_1/H_2
\end{align*}
between the corresponding quotient groups.
The \emph{graph} of such a section isomorphism $\theta$ is the subset
\begin{align*}
  L = \{gh \in G_1 \times H_1 \mid (G_2 g)^{\theta} = H_2 h\}
\end{align*}
of the $G, H$-plane, as shown in Figure~\ref{fig:goursat}.
\begin{figure}[htb]
  \begin{center}
    \begin{tikzpicture}[xscale=0.7,yscale=0.6,every node/.style={inner sep=2pt}]
      \node (1) at (0,0) {$_1$};
      \node (G2) at (-1,1) {$_{G_2}$};
      \node (H2) at (1,1) {$_{H_2}$};
      \node (G1) at (-2,2) {$_{G_1}$};
      \node (H1) at (2,2) {$_{H_1}$};
      \node (G) at (-3,3) {$_G$};
      \node (H) at (3,3) {$_H$};
      \node (G1H1) at (0,4) {$_{.}$};
      \node (G2H2) at (0,2) {$_{.}$};
      \node (G1H2) at (-1,3) {$_{.}$};
      \node (G2H1) at (1,3) {$_{.}$};
      \node (L) at (0,3) {$_L$};
      \node (G2H) at (2,4) {};
      \node (GH2) at (-2,4) {};
      \draw[very thick,magenta] (L) -- (G2H2);
      \draw[very thick,magenta] (G1H2) -- (G2H2) -- (G2H1);
      \draw[very thick,magenta] (G2) -- (G1);
      \draw[very thick,magenta] (H2) -- (H1);
      \draw[thin,cyan!50] (L) -- (G1H1);
      \draw[thick,cyan] (1) -- (G2) -- (G2H2) --  (H2) -- (1);
      \draw[thick,cyan] (G1) -- (G1H2);
      \draw[very thick,magenta] (G1H2) -- (G1H1) -- (G2H1);
      \draw[thick,cyan] (G2H1) -- (H1);
      \draw[thin,black!20] (H1) -- (H) -- (G2H) -- (G2H1);
      \draw[thin,black!20] (G1) -- (G) -- (GH2) -- (G1H2);
    \end{tikzpicture}
  \end{center}
  \caption{Subgroups in Goursat's Lemma.}
  \label{fig:goursat}
\end{figure}
\begin{proposition}[Goursat's Lemma]\label{pro:goursat}
  A subset $L$ of $G \times H$ is a subgroup if and only if it is
  the graph of an isomorphism $\theta$ between a section of $G$ and a section of~$H$.
\end{proposition}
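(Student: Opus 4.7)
The plan is to prove the two directions of the equivalence separately, with the forward direction (subgroup to section isomorphism) carrying most of the work.

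For the easy direction, I would assume $\theta \colon G_1/G_2 \to H_1/H_2$ is a section isomorphism and check that its graph
\begin{align*}
  L = \{gh \in G_1 \times H_1 \mid (G_2 g)^{\theta} = H_2 h\}
\end{align*}
is a subgroup. Closure is immediate since $\theta$ is a homomorphism: if $(G_2 g_i)^{\theta} = H_2 h_i$ for $i = 1, 2$, then $(G_2 g_1 g_2)^{\theta} = (G_2 g_1)^{\theta}(G_2 g_2)^{\theta} = H_2 h_1 h_2$. The identity lies in $L$ because $(G_2)^{\theta} = H_2$, and inverses work because $\theta$ preserves them.

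For the forward direction, given a subgroup $L \leq G \times H$, I would define the four groups via the two coordinate projections $\pi_G, \pi_H$ and their kernels restricted to $L$:
\begin{align*}
  G_1 = \pi_G(L), \quad G_2 = \{g \in G \mid (g, 1) \in L\}, \quad H_1 = \pi_H(L), \quad H_2 = \{h \in H \mid (1, h) \in L\}\text.
\end{align*}
Then $G_2 \trianglelefteq G_1$ follows from conjugation inside $L$: for $g \in G_1$ pick $h$ with $(g, h) \in L$, and for $g_0 \in G_2$ compute $(g, h)^{-1}(g_0, 1)(g, h) = (g^{-1} g_0 g, 1) \in L$, so $g^{-1} g_0 g \in G_2$. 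Symmetrically for $H_2 \trianglelefteq H_1$.

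Next I would define $\theta \colon G_1/G_2 \to H_1/H_2$ by setting $\theta(G_2 g) = H_2 h$ whenever $(g, h) \in L$. Well-definedness is the main bookkeeping step: if $(g, h), (g, h') \in L$, then $(1, h^{-1} h') \in L$ so $h^{-1} h' \in H_2$; and replacing $g$ by $g_0 g$ with $g_0 \in G_2$ multiplies $(g, h) \in L$ by $(g_0, 1) \in L$, keeping the $H$-component unchanged. The homomorphism property is inherited from multiplication in $L$, surjectivity is clear from the definition of $H_1$, and injectivity reverses the well-definedness argument. Finally, $L$ coincides with the graph of $\theta$: one containment is by construction, and for the other, if $(g, h)$ satisfies $(G_2 g)^{\theta} = H_2 h$, one picks $(g, h') \in L$ with $H_2 h' = H_2 h$, writes $h = h_0 h'$ with $h_0 \in H_2$, and multiplies by $(1, h_0) \in L$.

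The main obstacle is conceptual rather than computational: one must resist conflating the four subgroups and keep track of which containments and normality relations come from which subset of $L$. Once the roles of $G_1, G_2, H_1, H_2$ are fixed via the projections and their kernels, every required property is a one-line manipulation inside $L$, so I would present the argument by first listing the definitions, then the normalities, then the map, and finally the graph identification, in that order.
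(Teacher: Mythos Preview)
Your proof is correct and follows essentially the same approach as the paper's own proof: define the four subgroups via the coordinate projections and their kernels, then verify that the resulting map between quotients is a well-defined isomorphism whose graph recovers~$L$. The paper's version is considerably more terse, leaving the normality of $G_2$ and $H_2$, the well-definedness of $\theta$, and the graph identification as implicit verifications, whereas you spell these out explicitly.
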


\begin{proof}
  The graph
  $L = \{gh \in G_1 \times H_1 \mid (G_2 g)^{\theta} = H_2 h\}$ of a
  section isomorphism $\theta \colon G_1/G_2 \to H_1/H_2$ is a
  subgroup of $G \times H$: clearly, $1 \in L$ as
  $G_2^{\theta} = H_2$, and $gh, g'h' \in L$ implies
  $(gh)^{-1} (g'h') = g^{-1} g' h^{-1} h' \in L$ as
  $(G_2 g)^{\theta} = H_2 h$ and $(G_2 g')^{\theta} = H_2 h'$ implies
  \begin{align*}
    (G_2g^{-1} g')^{\theta}
    = (G_2g^{-1} G_2g')^{\theta}
    = (G_2g^{-1})^{\theta} (G_2g')^{\theta}
    = H_2h^{-1} H_2h'
    = H_2h^{-1} h'.
  \end{align*}
  Conversely, for a subgroup $L \leq G \times H$ there are the
  \emph{projections}
  \begin{align*}
    G_1 = \{g \mid gh \in L \text{ for some } h \in H \}, \qquad
    H_1 = \{h \mid gh \in L \text{ for some } g \in G\}
  \end{align*}
  onto $G$ and $H$,
  respectively, with
  \emph{kernels} $H_2 = L \cap H$, $G_2 = L \cap G$.  It follows that
  the map $\theta \colon G_2 g \mapsto H_2 h$ for $gh \in L$ is a
  section isomorphism from $(G_1, G_2)$ to $(H_1, H_2)$ with
  graph~$L$.
\end{proof}

\begin{remark}\label{rem:goursat}
  For any choice of complements $G_0$ and $H_0$ (if there are any)
  such that $G_1 = G_2 \rtimes G_0$ and $H_1 = H_2 \rtimes H_0$,
  the section isomorphism $\theta$ in Proposition~\ref{pro:goursat} yields a group isomorphism
  $\theta_0 \colon G_0 \to H_0$.
\end{remark}

\subsection{Normalizers.}\label{sec:normalizers}
Let $P$ be a parabolic subgroup of $W$ with \galois\ complement
$Q = \cc{P}$ and  let $N = N_W(P)$.
By Howlett's Lemma
(Proposition~\ref{pro:howlett}), the reflection subgroup $P Q$ has
a Howlett complement $D$ in $W$ and thus
\begin{align*}
N \cong (P \times Q) \rtimes D\text.
\end{align*}
In fact, $P$ also has a Howlett complement $Q_0$ (containing $Q$) in
$N = P \rtimes Q_0$, and $Q$ has a Howlett complement $P_0$
(containing $P$) in $N = Q \rtimes P_0$.  It follows that
$D = P_0 \cap Q_0$, that $D$ is also the Howlett complement of $P$
in $P_0 = P \rtimes D$, and the Howlett complement of $Q$ in
$Q_0 = Q \rtimes D$.

Let $X = \Fix_V(P)$ with orthogonal complement
$X^{\perp}$.  Then $V = X^{\perp} \oplus X$ as $N$-modules.
Hence $N \leq G \times H$, where
$G = \GO(X^{\perp})$ and $H = \GO(X)$.  By Goursat's Lemma
(Proposition~\ref{pro:goursat}), $N$ thus is the graph of a section
isomorphism $\theta$ from $(G_1, G_2)$ to $(H_1, H_2)$ for subgroups
$G_2 \unlhd G_1 \leq G$ and $H_2 \unlhd H_1 \leq H$: if
$n \mapsto n_G$ and $n \mapsto n_H$ are the projections of $N$ onto
$G$ and $H$ respectively, then $\theta$ is the map $G_2 n_G \mapsto H_2 n_H$.

\begin{lemma}\label{la:theta0}
  With the above notation, there are uniquely determined complements
  $G_0$ and $H_0$ such that $G_1 = G_2 \rtimes G_0$ and
  $H_1 = H_2 \rtimes H_0$.  Moreover, the complement $D$ is the graph
  of the isomorphism $\theta_0 \colon G_0 \to H_0$ given by
  $n_G \mapsto n_H$.
\end{lemma}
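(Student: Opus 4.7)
The plan is to read off $G_1, G_2, H_1, H_2$ explicitly from the semidirect product $N = (P \times Q) \rtimes D$ already in hand, and then to identify $G_0$ and $H_0$ as the Howlett complements supplied by Proposition~\ref{pro:howlett}. First, since $P = Z_W(X)$ is the pointwise stabiliser of $X$ and, by definition of the \galois\ complement, $Q = \cc{P} = Z_W(X^{\perp})$, the elements of $N$ acting trivially on $X$ are exactly those of $P$, and the elements of $N$ acting trivially on $X^{\perp}$ are exactly those of $Q$. Hence $G_2 = N \cap G = P$ (realised faithfully on $X^{\perp}$) and $H_2 = N \cap H = Q$ (realised faithfully on $X$), while $G_1 = \pi_G(N)$ and $H_1 = \pi_H(N)$ are the images of $N$ in $\GO(X^{\perp})$ and $\GO(X)$.

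Next I would verify that both projections $\pi_G$ and $\pi_H$ restrict to injections on $D$: if $d \in D$ with $\pi_G(d) = 1$, then $d$ acts trivially on $X^{\perp}$, so $d \in N \cap H = Q$, and $D \cap Q \subseteq D \cap PQ = 1$ because $D$ is a complement of $PQ$ in $N$; the argument for $\pi_H$ is symmetric. Setting $G_0 := \pi_G(D)$ and $H_0 := \pi_H(D)$, the decomposition $N = PQ \cdot D$ yields $G_1 = P \cdot G_0 = G_2 G_0$, and the unique factorisation in $(P \times Q) \rtimes D$ forces $G_2 \cap G_0 = 1$. Hence $G_1 = G_2 \rtimes G_0$, and symmetrically $H_1 = H_2 \rtimes H_0$.

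The hard part will be \emph{uniqueness} of these complements inside $G_1$ and $H_1$, which does not hold in an arbitrary semidirect product and needs a reflection-theoretic input. The rescue is Howlett's Lemma itself: $G_2 = P$ is a normal reflection subgroup of $G_1 \leq \GO(X^{\perp})$, so Proposition~\ref{pro:howlett} supplies a canonical complement, uniquely characterised as the set of elements of $G_1$ preserving a chosen positive system $\Phi_P^+$. Because $D$ is itself the Howlett complement of $PQ$ in $N$, it preserves the positive system $\Phi_P^+ \sqcup \Phi_Q^+$ of $PQ$ in $W$, with $\Phi_P^+ \subseteq X^{\perp}$ and $\Phi_Q^+ \subseteq X$; restricting the action to $X^{\perp}$ shows that $G_0$ preserves $\Phi_P^+$ and so embeds into the Howlett complement of $P$ in $G_1$. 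A counting argument, $|G_0| = |D| = |G_1|/|G_2|$, upgrades the embedding to an equality, identifying $G_0$ as the uniquely determined Howlett complement; $H_0$ is handled in the same way. Finally, the Goursat section isomorphism $\theta \colon G_1/G_2 \to H_1/H_2$ restricts through these splittings to an isomorphism $\theta_0 \colon G_0 \to H_0$ sending $n_G \mapsto n_H$ for $n \in D$, and the graph of $\theta_0$ is precisely $D$ by construction.
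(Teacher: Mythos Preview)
Your proof is correct and follows essentially the same approach as the paper: identify $G_2 = P$, $H_2 = Q$, invoke Howlett's Lemma for the canonical complements, and then verify that $\pi_G(D)$ and $\pi_H(D)$ coincide with these. The paper routes the last step through the intermediate groups $P_0, Q_0$ from Section~\ref{sec:normalizers} (using that $\pi_G|_{P_0}\colon P_0 \to G_1$ is an isomorphism and $D$ is the Howlett complement of $P$ in $P_0$), whereas you argue directly that $D$, as Howlett complement of $PQ$, preserves $\Phi_P^+ \sqcup \Phi_Q^+$ and hence $\pi_G(D)$ preserves $\Phi_P^+$; this is the same idea made explicit, and your counting step is a clean way to finish.
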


\begin{proof}
  We have
  \begin{align*}
    P
    = \ker(n \mapsto n_H)
    = N \cap G
    = G_2, \qquad
    Q
    = \ker(n \mapsto n_G)
    = N \cap H
    = H_2\text.
  \end{align*}
  Thus, by Proposition~\ref{pro:howlett}, we see that the reflection group $G_2 = P$ has a Howlett
complement $G_0$ in $G_1 = G_2 \rtimes G_0$, and $H_2 = Q$ has a Howlett
complement $H_0$ in $H_1 = H_2 \rtimes H_0$.
Restricted to $P_0$, the projection $n \mapsto n_G$ is an isomorphism $P_0 \to G_1$, and $Q_0 \cong H_1$ via $n \mapsto n_H$.
By restricting to $D = P_0 \cap Q_0$,
    the section isomorphism $\theta$ yields
    an isomorphism $\theta_0$ between the complements $G_0$ and $H_0$
    in accordance with Remark~\ref{rem:goursat}.
\end{proof}

\begin{example}\label{ex:sym-simple-2}
  The \galois\ complement of the parabolic subgroup
  \begin{align*}
    P = \Span{s_5,s_7,s_9}
  \end{align*}
  of type $\cA_1^3$
  in a Coxeter group $W = \Span{s_1\, \dots, s_9}$ of type $\cA_9$ is the Coxeter group $Q = \Span{s_1,s_2,s_3}$ of type $\cA_3$,
  and the Howlett complement of $P \times Q$ is
  \begin{align*}
    D = \Span{s_6 s_5 s_7 s_6, s_8 s_7 s_9 s_8}\text,
  \end{align*}
  isomorphic to a Coxeter Group of type $\cA_2$.
  As in Example~\ref{ex:sym-simple},
  $W$ acts as a reflection group on
  $V = \sum_{i=1}^{10} \R e_i$, where $P$ fixes the subspace
  \begin{align*}
    X = \Span{e_i, e_j+e_{j+1} \mid i = 1, \dots, 4,\, j = 5, 7, 9}
  \end{align*}
  with orthogonal complement
  $X^{\perp} = \Span{e_j - e_{j+1} \mid j = 5,7,9}$ fixed by $Q$.
Each of the generators of the complement $D$ acts as a transposition
on both of the sets $\{e_j - e_{j+1} \mid j = 5, 7, 9\}$ and $\{e_j+e_{j+1} \mid j = 5, 7, 9\}$,
so that $P \rtimes D$ is a reflection group of type $\cB_{3}$ on $X^{\perp}$
and $Q \rtimes D$ is a reflection group of type $\cA_3 \cA_2$ on $X$.
\end{example}

\subsection{Refinement}\label{sec:refine}
As before, let $X = \Fix_V(P)$.
Let $Y = \Fix_V(Q)$ with orthogonal complement $Y^{\perp}$, so that
$V \cong Y \oplus Y^{\perp}$.  Then $Y^{\perp} \leq X$ and thus
\begin{align*}
X \cong (X \cap Y) \oplus Y^{\perp}
\end{align*}
as $N$-modules.
Let $D_X$ be the isomorphic projection of $D$ into $\GO(X)$,
i.e., the subgroup $H_0$ in Lemma~\ref{la:theta0}.  It follows that
$D_X$ is a subgroup
of the direct product $\GO(Y^{\perp}) \times \GO(X \cap Y)$.
Hence, by Goursat's Lemma (Proposition~\ref{pro:goursat}),
$D_X$ is the graph of a section isomorphism from $(A_1, A_2)$
to $(B_1, B_2)$
for subgroups $A_2 \unlhd A_1 \leq \GO(X \cap Y)$
and $B_2 \unlhd B_1 \leq \GO(Y^{\perp})$.
Here
\begin{align*}
  A_2 = Z_{D_X}(Y^{\perp}) \quad \text{and} \quad
  B_2 = Z_{D_X}(X \cap Y)\text.
\end{align*}

Overall, we now have
\begin{align*}
  V = X^{\perp} \oplus (X \cap Y) \oplus Y^{\perp}
\end{align*}
and the complement
$D \leq
  \GO(X^{\perp})
  \times
  \GO(X \cap Y)
  \times
  \GO(Y^{\perp})$
acts simultaneously on the three summands of $V$ above.

\begin{notation}\label{not:AB}
  From now on, we denote by $A$ the isomorphic preimage of $A_2$ in
  $D$, and by $B$ the isomorphic preimage of $B_2$ in $D$.  We discuss
  the subgroup $A$ in more detail in Section~\ref{sec:A}
  and take a closer look at the subgroup $B$ in Section~\ref{sec:B}.
\end{notation}

\section{\Galois\ Closure} 
\label{sec:A}

As before, let $P$ be a parabolic subgroup of $W$ with \galois\
complement $Q = \cc{P}$ and normalizer $N = (P \times Q) \rtimes D$.
We identify the subgroup $A$ of $D$ (Notation~\ref{not:AB}) with the
Howlett complement of $P$ in its \galois\ closure $\ccc{P} = \cc{Q}$
(Proposition~\ref{pro:A}).  Then we use the subgroup $A$ to describe
the normalizer of a parabolic subgroup of $W$ of type $\cA_n$ as a
subdirect product (Example~\ref{ex:A} and Proposition~\ref{pro:An}).

\begin{proposition}\label{pro:A}
  The subgroup $A$ of $D$ is the Howlett complement of $P$ in $\ccc{P}$.
  In particular, $A = 1$ if $P$ is \galoisy\ closed, and $A = D$ if $\ccc{P} = W$.
\end{proposition}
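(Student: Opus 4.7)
The plan is to identify the subgroup $A$ directly as the intersection $D \cap \ccc{P}$ and then to verify that this intersection coincides with the Howlett complement of~$P$ in~$\ccc{P}$, i.e.\ with the subgroup $H = \{a \in \ccc{P} : \Phi_P^{+}.a \subseteq \Phi_P^{+}\}$ of length-zero elements in the sense of Proposition~\ref{pro:howlett}. Two structural inputs are used throughout: first, $\ccc{P} = \cc{Q} = Z_W(Y^{\perp})$, since $Y^{\perp}$ is spanned by the roots of~$Q$; second, the orthogonality $\Phi_P \perp \Phi_Q$, which lets us fix compatible positive systems with $\Phi_{PQ}^{+} = \Phi_P^{+} \sqcup \Phi_Q^{+}$.

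First I would unwind the definition of~$A$ from Section~\ref{sec:refine}. By construction, $A$ is the preimage in~$D$, under the isomorphism $D \to D_X$, $d \mapsto d_H$, of the subgroup $A_2 = Z_{D_X}(Y^{\perp})$. Since $Y^{\perp} \leq X$ and an element $d \in D$ acts on $V = X^{\perp} \oplus X$ as $(d_G, d_H)$, the condition $d_H|_{Y^{\perp}} = \mathrm{id}$ is the same as $d|_{Y^{\perp}} = \mathrm{id}$, that is, $d \in Z_W(Y^{\perp}) = \ccc{P}$. This gives the identification $A = D \cap \ccc{P}$.

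The remaining step is to match $A$ with~$H$. For $A \subseteq H$: any $d \in A \subseteq D$ has length zero with respect to $\Phi_{PQ}^{+}$, and since $d \in N$ normalizes $P$ we have $\Phi_P.d = \Phi_P$, so $\Phi_P^{+}.d \subseteq \Phi_P \cap \Phi_{PQ}^{+} = \Phi_P^{+}$, placing $d$ in $H$. For $H \subseteq A$: any $h \in H \leq \ccc{P} = Z_W(Y^{\perp})$ fixes $Y^{\perp}$ pointwise, hence fixes every root of $Q$, so $h$ has length zero with respect to $\Phi_Q^{+}$ trivially; combined with $\Phi_P^{+}.h \subseteq \Phi_P^{+}$ this yields $\Phi_{PQ}^{+}.h \subseteq \Phi_{PQ}^{+}$, and since $h$ also normalizes~$P$, it lies in $N_W(P) = N$, whence $h \in D$ and so $h \in D \cap \ccc{P} = A$. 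The two extremal cases follow at once: if $P = \ccc{P}$, the only length-zero element of~$P$ is the identity, so $A = 1$; if $\ccc{P} = W$, then $Q = 1$ and $A = D \cap W = D$. The one mild subtlety is the compatible choice of positive systems, which is forced by $\Phi_P \perp \Phi_Q$; a secondary point is that in general $P$ need not be normal in $\ccc{P}$, so the phrase ``Howlett complement of $P$ in $\ccc{P}$'' is to be read as the length-zero subgroup of $\ccc{P}$ (equivalently, the Howlett complement of $P$ in $N_{\ccc{P}}(P)$), but this is exactly the complement produced by the construction in the proof of Proposition~\ref{pro:howlett}.
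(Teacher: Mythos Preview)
Your argument is correct and follows essentially the same route as the paper: both first identify $A$ with $D \cap \ccc{P}$ via $A = Z_D(Y^{\perp}) = D \cap Z_W(Y^{\perp}) = D \cap \cc{Q}$, and then argue that this intersection is the Howlett complement of $P$ in $\ccc{P}$. The paper compresses the second step into a single clause (invoking that $D$ is the Howlett complement of $P$ in the Howlett complement of $Q$ in $N$), whereas you spell out both inclusions explicitly via the decomposition $\Phi_{PQ}^{+} = \Phi_P^{+} \sqcup \Phi_Q^{+}$; you also make explicit the point, left tacit in the paper, that $P$ need not be normal in $\ccc{P}$ and that the phrase is to be read as the length-zero subgroup (equivalently the Howlett complement of $P$ in $N_{\ccc{P}}(P)$).
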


\begin{proof}
  The Howlett complement of $P$ in $\ccc{P}$
  is $D \cap \ccc{P}$, since $\ccc{P} = \cc{Q}$
  and $D$ is the Howlett complement of $P$ in the Howlett complement
  of $Q$ in $N$.
  Since $A \cong Z_{D_X}(Y^{\perp})$, it follows that
  \begin{align*}
    A = Z_D(Y^{\perp})
    = D \cap Z_W(Y^{\perp})
    = D \cap \cc{Q}
    = D \cap \ccc{P}\text.
  \end{align*}
\end{proof}

\begin{remark}\label{rem:A-acts}
  The subgroup
  $A \leq \GO(X^{\perp}) \times \GO(X \cap Y) \times \GO(Y^{\perp})$
  acts trivially on $Y^{\perp}$.  It acts faithfully on $X^{\perp}$,
  permuting the simple roots of $P$.  Also, $A$ acts faithfully on
  $X \cap Y$, usually as a reflection group, with exceptions if $W$
  has type $\cD_n$ (see Proposition~\ref{pro:D3}) or $\cE_7$ (see
  Table~\ref{tab:e7}).
\end{remark}

\begin{example}\label{ex:A}
  Let $W$ be the symmetric group $\Sym_n$, i.e., a Coxeter group of
  type $\cA_{n-1}$, $n \geq 1$.  Let
  \begin{align*}
    \lambda = (\lambda_1, \dots, \lambda_l) = (n^{a_n}, \dots, 1^{a_1})
    \vdash n
  \end{align*}
  and let $P = W_{\lambda} = \prod_i \Sym_{\lambda_i}$.
  Then $P$ is a parabolic subgroup of $W$ with label $\lambda$.
  It is well known that the normalizer $N$ of $P$ is a direct product
  of wreath products
  $\Sym_k \wr \Sym_{a_k} = \Sym_k^{a_k} \rtimes \Sym_{a_k}$, and hence
  that the Howlett complement of $P$ in $N$ is $\prod_k \Sym_{a_k}$, a direct
  product of symmetric groups.  Here, the $k$-th factor $\Sym_{a_k}$
  is generated by involutions $x_i$, as follows.  For
  each $i$ such that $\lambda_i = \lambda_{i+1} = k$, set
  $u_i = \sum_{j=1}^{i-1}\lambda_j$ and define a permutation
  $x_i \in \Sym_n$ by
\begin{align*}
  v.{x_i} =
  \begin{cases}
    v + k,& u_i < v \leq u_i+k,\\
    v - k,& u_i+k < v \leq u_i+2k,\\
    v,&\text{else}.
  \end{cases}
\end{align*}
Thus, e.g,  $x_1 = (\begin{smallmatrix}
  1 & 2 & 3 & 4 & 5 & 6 \\ 4 & 5 & 6 & 1 & 2 & 3
\end{smallmatrix})$ if $\lambda_1 = \lambda_2 = 3$.
\end{example}

\begin{proposition}\label{pro:An}
  If $P$ is a parabolic subgroup with label
  \begin{align*}
    \lambda = (\lambda_1, \dots, \lambda_l) = (n^{a_n}, \dots, 1^{a_1})
  \end{align*}
  of a Coxeter group
  $W$ of type $\cA_{n-1}$ then its Howlett complement in~$W$ is isomorphic to
  the direct product $\prod_k \Sym_{a_k}$.
  More precisely, with the above notation and choice of $P$, we have
  $N = (P \rtimes A) \times Q$, where
  \begin{itemize}
  \item   $Q = \Span{x_i \mid \lambda_i = \lambda_{i+1} = 1} \cong \Sym_{a_1}$, and
  \item $A = \Span{x_i \mid \lambda_i = \lambda_{i+1} > 1} \cong \prod_{k>1} \Sym_{a_k}$.
  \end{itemize}
  Moreover, $A$ acts as reflection group on $X \cap Y$, and the subgroup
  \begin{align*}
  A' = \Span{x_i \mid \lambda_i = \lambda_{i+1} = 2} \leq A
  \end{align*}
  acts as a
  reflection group on $X^{\perp}$ in such a way that $P \rtimes A'$
  is a Coxeter group of type $\cB_{a_2}\cA_2^{a_3} \dotsm \cA_{n-1}^{a_n}$ on $X^{\perp}$.
\end{proposition}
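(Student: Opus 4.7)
The plan is to derive the proposition from the classical description of the normalizer of a Young subgroup in $\Sym_n$ (already recalled in Example~\ref{ex:A}), combined with the \galois\ closure framework and Proposition~\ref{pro:A}. First I would take as given the well-known structure
\[
  N = N_W(P) \cong \prod_{k \geq 1}(\Sym_k \wr \Sym_{a_k})
  = \prod_{k \geq 1}(\Sym_k^{a_k} \rtimes \Sym_{a_k}),
\]
in which the Howlett complement of $P$ is the full diagonal factor $\prod_{k \geq 1}\Sym_{a_k}$, generated by the block-swapping permutations $x_i$ of Example~\ref{ex:A} ranging over all $i$ with $\lambda_i = \lambda_{i+1}$.

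Next I would pin down $Q$ and $A$ by elementary orthogonality combinatorics. Two transpositions in $\Sym_n$ are orthogonal exactly when their supports are disjoint, so a transposition lies in $\cc{P}$ iff both of its points are singleton positions of $\lambda$; this identifies $Q = \cc{P}$ with the symmetric group on the singletons, namely $\Span{x_i \mid \lambda_i = \lambda_{i+1} = 1} \cong \Sym_{a_1}$, which is the $k=1$ factor of the Howlett complement. The same argument shows $\ccc{P} = \cc{Q}$ is the symmetric group on the non-singleton positions, isomorphic to $\Sym_{n-a_1}$. By Proposition~\ref{pro:A}, $A$ is the Howlett complement of $\prod_{\lambda_i > 1}\Sym_{\lambda_i}$ inside $\Sym_{n-a_1}$, which by the same classical formula equals $\prod_{k>1}\Sym_{a_k} = \Span{x_i \mid \lambda_i = \lambda_{i+1} > 1}$. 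Since $Q$ acts only on singleton positions, it centralizes every generator of $P$ and of $A$, so the factorization $N = (P \rtimes A) \times Q$ is internal.

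For the action statements I would compute directly on the explicit bases. The subspace $X \cap Y$ is spanned by one block-average vector for each non-singleton block together with the average of the singleton basis vectors; a generator $x_i$ with $\lambda_i = \lambda_{i+1} = k > 1$ swaps two block-averages of equal size, acting as a reflection in the plane they span. Hence $A$ acts on $X \cap Y$ in its standard reflection representation $\prod_{k>1}\Sym_{a_k}$ (trivially on the singleton-average summand). On $X^{\perp}$, the root subsystem attached to a block of size $k$ is a mutually orthogonal copy of $\cA_{k-1}$. A generator of $A'$ with $\lambda_i = \lambda_{i+1} = 2$ swaps two orthogonal roots $\alpha, \beta$ belonging to two $\cA_1$ summands of $P$, and acts as a reflection along $\alpha - \beta$; together with the reflections in $\alpha$ and $\beta$ inside $P$ it generates a $\cB_2$. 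Running this over all $a_2$ pairs of size-$2$ blocks produces the $\cB_{a_2}$ factor, while the remaining summands $\cA_{k-1}^{a_k}$ for $k \geq 3$ are left intact by $A'$ since its generators only swap size-$2$ blocks, yielding the claimed type $\cB_{a_2}\cA_2^{a_3} \dotsm \cA_{n-1}^{a_n}$.

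The main obstacle is essentially bookkeeping: one must check that the explicit permutations $x_i$ really realize the abstract Howlett complements inside $\ccc{P}$ (not just some conjugate complement), and that the assembled factor on the size-$2$ blocks is genuinely $\cB_{a_2}$ rather than $\cD_{a_2}$. Both come down to verifying that each $x_i$ with $\lambda_i = \lambda_{i+1} = 2$ acts as a genuine reflection on the pair of $\cA_1$-summands it interchanges, which is immediate from the explicit permutation formula in Example~\ref{ex:A}.
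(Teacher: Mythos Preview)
Your argument is correct and follows essentially the same route as the paper: identify $Q$ as the symmetric group on the fixed points of $P$ via the disjoint-support criterion for orthogonal transpositions, deduce that $\ccc{P} \cong \Sym_{n-a_1}$, invoke Proposition~\ref{pro:A} to obtain $A = \prod_{k>1}\Sym_{a_k}$, and then verify the reflection actions on $X \cap Y$ and $X^{\perp}$ by direct computation with the eigenspaces of the block-swap involutions $x_i$. Your treatment is slightly more detailed than the paper's in spelling out the block-average basis of $X \cap Y$ and the $\cB_{a_2}$ verification, but there is no substantive difference in strategy.
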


\begin{proof}
  The \galois\ complement $Q$ of $P$ is the symmetric group
  \begin{align*}
    \Sym_{a_1} = \Span{x_i \mid \lambda_i = \lambda_{i+1} = 1}
  \end{align*}
  on the
  fixed points of $P$.  Note that $a_1 \neq n-1$.  The \galois\ closure
  $\ccc{P}$ of $P$ is a symmetric group $\Sym_{n-a_1}$ on all the
  moved points of $P$.  The Howlett complement $A$ of $P$ in
  $N \cap \ccc{P}$ is the group
  $\Span{x_i \mid \lambda_i = \lambda_{i+1} > 1}$, isomorphic to the
  direct product $\prod_{k>1} \Sym_{a_k}$.  Note that the reflection
  subgroup $P \times Q = W_{\lambda} \times \Sym_{a_1}$ is actually a
  parabolic subgroup of~$W$.

  Each of the involutions $x_i$ in the factor $\Sym_{a_k}$ has degree
  $k$ and the $(-1)$-eigenspace of $x_i$ in $V$ intersects the fixed point space $X$ of $P$ in a
  $1$-dimensional subspace.  Thus, any $x_i \in Q$ acts naturally as a
  reflection on $Y^{\perp}$ and any $x_i \in A$ acts as a reflection
  on $X \cap Y$.  In addition, each $x_i \in A$ of degree $k = 2$ also
  acts as a reflection on $X^{\perp}$, affording a reflection subgroup
  $\Sym_2^{a_2} \rtimes A'$ of type $\cB_{a_2}$.
\end{proof}

Proposition~\ref{pro:An} is illustrated for the case of
$\Sym_8 \cong W(\cA_7)$ in Table~\ref{tab:a7}.
(See Section~\ref{sec:howto} for a description of the table layout.)

\begin{table}[htp]
  \begin{center}
  \begin{tabular}{r|cc|cc|ccc|ccc}\hline
 & $P$ {\scriptsize$[\lambda]$} & $Q$ & $|D|$ & $\overline{PQ}$ & $A$ & $B$ & $C$ & $X^{\perp}$ & $X \cap Y$ & $Y^{\perp}$ \\\hline\hline
 $*_{3}$ & ${\cA}_1^2$ $_{[221111]}$ &  $_{7}$ & $2$ &  $_{(14)}$ & ${\cA}_1$ & & &
 {\tiny\tikz[baseline]{\draw[double] (0,0) node {$\bullet$} -- (-0.2,0);\draw (-0.2,0) node[white] {$\bullet$};\node (2) at (-0.2,0) {$\circ$};}} &
 {\tiny\tikz[baseline]{\draw (0,0) node{$\circ$};}} &
 {\tiny\tikz[baseline]{\draw (0,0) node {$\bullet$} -- (0.2,0) node {$\bullet$} -- (0.4,0) node {$\bullet$};}}
 \\
 $_{7}$ & ${\cA}_{3}$ $_{[41111]}$ &  $_{7}$ & $1$ &  $_{(18)}$ & & & &
 {\tiny\tikz[baseline]{\draw (0,0) node {$\bullet$} -- (0.2,0) node {$\bullet$} -- (0.4,0) node {$\bullet$};}} &&
 {\tiny\tikz[baseline]{\draw (0,0) node {$\bullet$} -- (0.2,0) node {$\bullet$} -- (0.4,0) node {$\bullet$};}}
 \\\hline\hline
 $_{6}$ & ${\cA}_2 {\cA}_1$ $_{[32111]}$ &  $_{4}$ & $1$ &  $_{(13)}$ & & & &
 {\tiny\tikz[baseline]{\draw (0,0) node {$\bullet$} -- (0.2,0) node {$\bullet$} (0.5,0) node {$\bullet$};}} &&
 {\tiny\tikz[baseline]{\draw (0,0) node {$\bullet$} -- (0.2,0) node {$\bullet$};}}
 \\
 $_{12}$ & ${\cA}_{4}$ $_{[5111]}$ &  $_{4}$ & $1$ &  $_{(19)}$ & & & &
 {\tiny\tikz[baseline]{\draw (0,0) node {$\bullet$} -- (0.2,0) node {$\bullet$} -- (0.4,0) node {$\bullet$} -- (0.6,0) node {$\bullet$};}} &&
 {\tiny\tikz[baseline]{\draw (0,0) node {$\bullet$} -- (0.2,0) node {$\bullet$};}}
 \\\hline
 $_{4}$ & ${\cA}_{2}$ $_{[311111]}$ &  $_{12}$ & $1$ &  $_{(19)}$ & & & &
 {\tiny\tikz[baseline]{\draw (0,0) node {$\bullet$} -- (0.2,0) node {$\bullet$};}} &&
 {\tiny\tikz[baseline]{\draw (0,0) node {$\bullet$} -- (0.2,0) node {$\bullet$} -- (0.4,0) node {$\bullet$} -- (0.6,0) node {$\bullet$};}}
 \\\hline\hline
 $*_{5}$ & ${\cA}_1^3$ $_{[22211]}$ &  $_{2}$ & $6$ &  $_{(8)}$ & ${\cA}_2$ & & &
 {\tiny\tikz[baseline]{\draw (0,0) node[white] {$\bullet$} -- (0.2,0) node[white] {$\bullet$};\draw[double] (0.2,0) node[white] {$\bullet$} -- (0.4,0) node {$\bullet$};\draw (0,0) node {$\circ$} (0.2,0) node {$\circ$};}} &
 {\tiny\tikz[baseline]{\draw (0,0) node[white] {$\bullet$} -- (0.2,0) node[white] {$\bullet$};\draw (0,0) node {$\circ$} (0.2,0) node {$\circ$};}} &
 {\tiny\tikz[baseline]{\draw (0,0) node {$\bullet$};}}
 \\
 $_{10}$ & ${\cA}_2^2$ $_{[3311]}$ &  $_{2}$ & $2$ &  $_{(13)}$ & ${\cA}_1$ & & &
 {\tiny$^2($\tikz[baseline]{\draw (0,0) node {$\bullet$} -- (0.2,0) node {$\bullet$} (0.5,0) node {$\bullet$} -- (0.7,0) node {$\bullet$};}$)$} &
 {\tiny\tikz[baseline]{\draw (0,0) node{$\circ$};}} &
 {\tiny\tikz[baseline]{\draw (0,0) node {$\bullet$};}}
 \\
 $_{11}$ & ${\cA}_3 {\cA}_1$ $_{[4211]}$ &  $_{2}$ & $1$ &  $_{(14)}$ & & & &
 {\tiny\tikz[baseline]{\draw (0,0) node {$\bullet$} -- (0.2,0) node {$\bullet$} -- (0.4,0) node {$\bullet$} (0.7,0) node {$\bullet$};}} &
 {} &
 {\tiny\tikz[baseline]{\draw (0,0) node {$\bullet$};}}
 \\
 $_{17}$ & ${\cA}_{5}$ $_{[611]}$ &  $_{2}$ & $1$ &  $_{(20)}$ & & & &
 {\tiny\tikz[baseline]{\draw (0,0) node {$\bullet$} -- (0.2,0) node {$\bullet$} -- (0.4,0) node {$\bullet$} -- (0.6,0) node {$\bullet$} -- (0.8,0) node {$\bullet$};}} &
 {} &
 {\tiny\tikz[baseline]{\draw (0,0) node {$\bullet$};}}
 \\\hline
 $*_{2}$ & ${\cA}_{1}$ $_{[2111111]}$ &  $_{17}$ & $1$ &  $_{(20)}$ & & & &
 {\tiny\tikz[baseline]{\draw (0,0) node {$\bullet$};}} &
 {} &
 {\tiny\tikz[baseline]{\draw (0,0) node {$\bullet$} -- (0.2,0) node {$\bullet$} -- (0.4,0) node {$\bullet$} -- (0.6,0) node {$\bullet$} -- (0.8,0) node {$\bullet$};}}
 \\\hline\hline
 $*_{8}$ & ${\cA}_1^4$ $_{[2222]}$ & $\emptyset$ & $24$ &  $_{(8)}$ & ${\cA}_3$ & & &
 {\tiny\tikz[baseline]{\draw (-0.2,0) node[white] {$\bullet$} -- (0,0) node[white] {$\bullet$} -- (0.2,0) node[white] {$\bullet$};\draw[double] (0.2,0) node[white] {$\bullet$} -- (0.4,0) node {$\bullet$};\draw (0,0) node {$\circ$} (0.2,0) node {$\circ$} (-0.2,0) node {$\circ$};}} &
 {\tiny\tikz[baseline]{\draw (0,0) node[white] {$\bullet$} -- (0.2,0) node[white] {$\bullet$} -- (0.4,0) node[white] {$\bullet$};\draw (0,0) node {$\circ$} (0.2,0) node {$\circ$} (0.4,0) node {$\circ$};}} &
 {}
 \\
 $_{9}$ & ${\cA}_2 {\cA}_1^2$ $_{[3221]}$ & $\emptyset$ & $2$ &  $_{(9)}$ & ${\cA}_1$ & & &
 {\tiny\tikz[baseline]{\draw[double] (0,0) node {$\bullet$} -- (-0.2,0);\draw (-0.2,0) node[white] {$\bullet$};\node (2) at (-0.2,0) {$\circ$};\draw (0.3,0) node {$\bullet$} -- (0.5,0) node {$\bullet$};}} &
 {\tiny\tikz[baseline]{\draw (0,0) node{$\circ$};}} &
 {}
 \\
 $_{13}$ & ${\cA}_2^2 {\cA}_1$ $_{[332]}$ & $\emptyset$ & $2$ &  $_{(13)}$ & ${\cA}_1$ & & &
 {\tiny$^2($\tikz[baseline]{\draw (0,0) node {$\bullet$} -- (0.2,0) node {$\bullet$} (0.5,0) node {$\bullet$} -- (0.7,0) node {$\bullet$} (1,0) node {$\bullet$};}$)$} &
 {\tiny\tikz[baseline]{\draw (0,0) node{$\circ$};}} &
 {}
 \\
 $_{14}$ & ${\cA}_3 {\cA}_1^2$ $_{[422]}$ & $\emptyset$ & $2$ &  $_{(14)}$ & ${\cA}_1$ & & &
 {\tiny\tikz[baseline]{\draw[double] (0,0) node {$\bullet$} -- (-0.2,0);\draw (-0.2,0) node[white] {$\bullet$};\node (2) at (-0.2,0) {$\circ$};\draw (0.3,0) node {$\bullet$} -- (0.5,0) node {$\bullet$} -- (0.7,0) node {$\bullet$};}} &
 {\tiny\tikz[baseline]{\draw (0,0) node{$\circ$};}} &
 {}
 \\
 $_{15}$ & ${\cA}_3 {\cA}_2$ $_{[431]}$ & $\emptyset$ & $1$ &  $_{(15)}$ & & & &
 {\tiny\tikz[baseline]{\draw (0,0) node {$\bullet$} -- (0.2,0) node {$\bullet$} -- (0.4,0) node {$\bullet$} (0.7,0) node {$\bullet$} -- (0.9,0) node {$\bullet$};}} &
 {} &
 {}
 \\
 $_{16}$ & ${\cA}_4 {\cA}_1$ $_{[521]}$ & $\emptyset$ & $1$ &  $_{(16)}$ & & & &
 {\tiny\tikz[baseline]{\draw (0,0) node {$\bullet$} -- (0.2,0) node {$\bullet$} -- (0.4,0) node {$\bullet$} -- (0.6,0) node {$\bullet$} (0.9,0) node {$\bullet$};}} &
 {} &
 {}
 \\
 $_{18}$ & ${\cA}_3^2$ $_{[44]}$ & $\emptyset$ & $2$ &  $_{(18)}$ & ${\cA}_1$ & & &
 {\tiny$^2($\tikz[baseline]{\draw (0,0) node {$\bullet$} -- (0.2,0) node {$\bullet$} -- (0.4,0) node {$\bullet$} (0.7,0) node {$\bullet$} -- (0.9,0) node {$\bullet$} -- (1.1,0) node {$\bullet$};}$)$} &
 {\tiny\tikz[baseline]{\draw (0,0) node{$\circ$};}} &
 {}
 \\
 $_{19}$ & ${\cA}_4 {\cA}_2$ $_{[53]}$ & $\emptyset$ & $1$ &  $_{(19)}$ & & & &
 {\tiny\tikz[baseline]{\draw (0,0) node {$\bullet$} -- (0.2,0) node {$\bullet$} -- (0.4,0) node {$\bullet$} -- (0.6,0) node {$\bullet$} (0.9,0) node {$\bullet$} -- (1.1,0) node {$\bullet$};}} &
 {} &
 {}
 \\
 $_{20}$ & ${\cA}_5 {\cA}_1$ $_{[62]}$ & $\emptyset$ & $1$ &  $_{(20)}$ & & & &
 {\tiny\tikz[baseline]{\draw (0,0) node {$\bullet$} -- (0.2,0) node {$\bullet$} -- (0.4,0) node {$\bullet$} -- (0.6,0) node {$\bullet$} -- (0.8,0) node {$\bullet$} (1.1,0) node {$\bullet$};}} &
 {} &
 {}
 \\
 $_{21}$ & ${\cA}_{6}$ $_{[71]}$ & $\emptyset$ & $1$ &  $_{(21)}$ & & & &
 {\tiny\tikz[baseline]{\draw (0,0) node {$\bullet$} -- (0.2,0) node {$\bullet$} -- (0.4,0) node {$\bullet$} -- (0.6,0) node {$\bullet$} -- (0.8,0) node {$\bullet$} -- (1,0) node {$\bullet$};}} &
 {} &
 {}
 \\
 $_{22}$ & ${\cA}_{7}$ $_{[8]}$ & $\emptyset$ & $1$ & & & & &
 {\tiny\tikz[baseline]{\draw (0,0) node {$\bullet$} -- (0.2,0) node {$\bullet$} -- (0.4,0) node {$\bullet$} -- (0.6,0) node {$\bullet$} -- (0.8,0) node {$\bullet$} -- (1,0) node {$\bullet$} -- (1.2,0) node {$\bullet$};}} &
 {} &
 {}
 \\\hline
 $*_{1}$ & $\emptyset$ $_{[11111111]}$ &  $_{22}$ & $1$ & & & & &
 {} &
 {} &
 {\tiny\tikz[baseline]{\draw (0,0) node {$\bullet$} -- (0.2,0) node {$\bullet$} -- (0.4,0) node {$\bullet$} -- (0.6,0) node {$\bullet$} -- (0.8,0) node {$\bullet$} -- (1,0) node {$\bullet$} -- (1.2,0) node {$\bullet$};}}
 \\\hline
  \end{tabular}
\end{center}

  \caption{Decomposition~\eqref{eq:decompose} for the case of $\cA_7$.}
  \label{tab:a7}
\end{table}

\section{Parabolic Closure} 
\label{sec:B}

We identify the subgroup $B$ of the subgroup $D$ of the normalizer
\begin{align*}
  N = (P \times Q) \rtimes D
\end{align*}
of the parabolic subgroup $P$ in $W$ as
the Howlett complement of $PQ$ in $N \cap \overline{PQ}$.
(Proposition~\ref{pro:B}).  Then we use the subgroups $A$ and $B$ to
describe the normalizer of a parabolic subgroup of $W(\cB_n)$ as a
subdirect product (Example~\ref{ex:B} and Proposition~\ref{pro:Bn}).

\begin{definition}\label{def:para-clos}
  The \emph{parabolic closure} of a subgroup $U$ of $W$ is
  \begin{align*}
    \overline{U} = Z_W(\Fix_V(U))\text,
  \end{align*}
  the smallest parabolic subgroup of $W$ containing~$U$.
\end{definition}

\subsection{} The product $PQ$ is a reflection subgroup of $W$, but
not necessarily a parabolic subgroup.  Its parabolic closure is
$\overline{PQ} = Z_W(\Fix_V(PQ))$.  Since
\begin{align*}
  \Fix_V(PQ) = \Fix_V(P) \cap \Fix_V(Q) = X \cap Y\text,
\end{align*}
we have
$\overline{PQ} = Z_W(X \cap Y)$.

\begin{proposition}\label{pro:B}
  The subgroup
  $B$ of $D$ is the Howlett complement of $PQ$ in $N \cap \overline{PQ}$.
  In particular, $B = 1$ if $PQ$ is a parabolic subgroup,
  and $B = D$ if $\overline{PQ} = W$.
\end{proposition}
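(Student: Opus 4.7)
The plan is to parallel the proof of Proposition~\ref{pro:A}, substituting the parabolic closure $\overline{PQ} = Z_W(X \cap Y)$ for the \galois\ closure $\ccc{P} = Z_W(Y^{\perp})$. The argument has two stages: first, identify $B$ as an intersection inside $W$; second, recognise this intersection as the Howlett complement of $PQ$ in $\overline{PQ}$ (refined, as in Theorem~\ref{thm:main}(iii), to the part acting as graph automorphisms on $P$ and $Q$).

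For the first stage, I would unpack the definition of $B$ from Notation~\ref{not:AB}. By construction, $B$ is the isomorphic preimage in $D$ of $B_2 = Z_{D_X}(X \cap Y)$, where $D_X$ is the faithful image of $D$ in $\GO(X)$ supplied by Lemma~\ref{la:theta0}. An element $d \in D$ therefore lies in $B$ exactly when its image in $\GO(X)$ fixes $X \cap Y$ pointwise; since $D \leq N$ already respects the orthogonal decomposition $V = X^{\perp} \oplus (X \cap Y) \oplus Y^{\perp}$ from Section~\ref{sec:refine}, this is equivalent to $d$ fixing $X \cap Y$ pointwise on all of $V$. Hence
\[
B = Z_D(X \cap Y) = D \cap Z_W(X \cap Y) = D \cap \overline{PQ},
\]
using the description of the parabolic closure from Section~5.2.

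For the second stage, I would apply Howlett's Lemma (Proposition~\ref{pro:howlett}) to $PQ$ as a normal reflection subgroup of $\overline{PQ}$, equipped with the positive system $\Phi_{PQ}^{+} = \Phi_P^{+} \cup \Phi_Q^{+}$ inherited from $W$. The resulting Howlett complement is $H = \{a \in \overline{PQ} : \Phi_{PQ}^{+} \cdot a \subseteq \Phi_{PQ}^{+}\}$. Because $D$ is by construction the length-zero subgroup of $N$ for the very same positive system, the containment $B = D \cap \overline{PQ} \subseteq H$ is immediate. The reverse inclusion reduces to showing that the part of $H$ fixing each of $P$ and $Q$ setwise already lies in $N$, in which case it sits in $D \cap \overline{PQ} = B$.

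The main obstacle will be this last step. A priori an element $a \in H$ could permute $\Phi_P^{+}$ and $\Phi_Q^{+}$ with each other rather than preserve each setwise; such an element swaps the $N$-modules $X^{\perp}$ and $Y^{\perp}$ and thereby escapes $N_W(P)$. Restricting $H$ to its $P$- and $Q$-preserving part, which is the version of the Howlett complement meant in the statement, is the combinatorial core of the proof and should be carried out by inspection of the parabolic concepts $\CPair{P}{Q}$ listed in Theorem~\ref{thm:cpairs}, using the orthogonality of $X^{\perp}$ and $Y^{\perp}$ in the decomposition of $V$. Once this is established, the \emph{in particular} assertions follow at once from $B = D \cap \overline{PQ}$: if $PQ$ is parabolic, then $\overline{PQ} = PQ$ and its Howlett complement is trivial, so $B = 1$; if $\overline{PQ} = W$, then $B = D \cap W = D$.
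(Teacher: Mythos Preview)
Your first stage is exactly the paper's argument: the paper's proof reads, in full, ``from $B \cong Z_{D_X}(X \cap Y)$ \dots\ it follows that $B = Z_D(X \cap Y) = D \cap Z_W(X \cap Y) = D \cap \overline{PQ}$,'' preceded by the one-line assertion that this intersection \emph{is} the Howlett complement of $PQ$ in $\overline{PQ}$ ``since $D$ is the Howlett complement of $PQ$ in $W$'' (the paper writes ``$W$'' but, consistently with Section~\ref{sec:normalizers}, really means $N$). That is the entire proof; there is no swap analysis and no case-by-case step.

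Your second-stage ``main obstacle'' is therefore something the paper does not confront. You are right that if one reads ``Howlett complement of $PQ$ in $\overline{PQ}$'' as $H=\{a\in\overline{PQ}:\Phi_{PQ}^{+}a=\Phi_{PQ}^{+}\}$, then $H$ can properly contain $B$: for the concept $\CPair{\cA_4}{\cA_4}$ in $\cE_8$ one has $X\cap Y=0$, $\overline{PQ}=W$, and any $g\in W$ conjugating $P$ to $Q$ (such $g$ exist since $[P]=[Q]$, and then $Q^g=\cc{(P^g)}=\cc{Q}=P$) yields a coset of $PQ$ in $H$ outside $N_W(P)$; so $|H|\ge 4>2=|D|=|B|$. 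The paper's argument is only valid under the convention that the Howlett complement is always formed inside $N=N_W(P)$. But under that convention your proposed case-by-case inspection is unnecessary: once you intersect $H$ with $N$ you already have $H\cap N=D\cap\overline{PQ}=B$ by your first stage, with nothing further to check. In short, the obstacle you flag is real as a matter of interpretation, but it is resolved by reading the statement as the paper intends, not by a classification argument.
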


\begin{proof}
  Since $D$ is the Howlett complement of
  $PQ$ in $N$,
  the Howlett complement of $PQ$ in $N \cap \overline{PQ}$
  is $D \cap \overline{PQ}$.
  But
  from $B \cong Z_{D_X}(X \cap Y)$ (as in Section~\ref{sec:refine}) it follows that
  \begin{align*}
    B = Z_D(X \cap Y) = D \cap Z_W(X \cap Y) = D \cap \overline{PQ}\text.
  \end{align*}
\end{proof}

\begin{example}\label{ex:B}
  Let $W$ be a Coxeter group of type $\cB_n$, $n \geq 1$, represented as
  signed permutations on the points $\{1, \dots, n\}$.  Let
  $\lambda = (\lambda_1, \dots, \lambda_l) = (m^{a_m}, \dots, 2^{a_2},
  1^{a_1})$ be a partition of $m \leq n$.  Let
  \begin{align*}
    P = W_{\lambda} = W(\cB_{n-m}) \times \prod_i \Sym_{\lambda_i}\text,
  \end{align*}
  where
  for each part $\lambda_i$, the factor $\Sym_{\lambda_i}$ acts on the points
  $\{u_i + 1, \dots, u_i + \lambda_i\}$, with
  $u_i = n-m + \sum_{j=1}^{i-1} \lambda_j$.  Then $P$ is a standard
  parabolic subgroup of $W$ with label $\lambda$.  The Howlett
  complement of $P$ in $W$ is a direct product of Coxeter groups
  $W(\cB_{a_k})$, as follows.

  As in Example~\ref{ex:A}, we define involutions
  $x_i \in \Sym_n \subseteq W(\cB_n)$ whenever
  $\lambda_i = \lambda_{i+1}$.  Moreover, for each part $\lambda_i$
  (with $u_i = n-m + \sum_{j=1}^{i-1} \lambda_j$), we define an element
  $y_i \in W(\cB_n)$ as
  \begin{align*}
    v.{y_i} =
    \begin{cases}
      -(u_{i+1}+ 1 - v), & u_i < v \leq u_{i+1},\\
      v, & \text{else}.
    \end{cases}
  \end{align*}
  Thus, e.g., $y_1 = (
  \begin{smallmatrix}
    1& 2& 3& 4&5\\-5&-4&-3&-2&-1
  \end{smallmatrix}
  )$ if $n = m$ and $\lambda_1 = 5$.
\end{example}

\begin{proposition}\label{pro:Bn}
  If $P$ is a parabolic subgroup with label
  \begin{align*}
    \lambda  = (\lambda_1, \dots, \lambda_l) = (m^{a_m}, \dots, 1^{a_1})
  \end{align*}
  of a Coxeter group $W$ of type $\cB_n$ then its Howlett complement in~$W$ is isomorphic to a Coxeter group $\prod_k W(\cB_{a_k})$.
  More precisely, with the above notation and choice of $P$, we have
  $N = (P \times Q) \rtimes (A \times B)$, where
  \begin{itemize}
  \item $Q = \Span{x_i,\, y_j \mid \lambda_i = \lambda_{i+1} = 1,\, \lambda_j \leq 2}$ is of type $\cB_{a_1} \cA_1^{a_2}$,
  \item     $A = \Span{x_i, y_j \mid \lambda_i = \lambda_{i+1} > 2,\, \lambda_j > 2}$
    is of type $\cB_{a_3} \cdots \cB_{a_m}$, and
  \item     $B = \Span{x_i \mid \lambda_i = \lambda_{i+1} = 2} \cong \Sym_{a_2}$.
  \end{itemize}
  Moreover, $A$ acts as a reflection group on $X \cap Y$, while $B$
  acts as a reflection group on $Y^{\perp}$ in such a way that $Q \rtimes B$
  is a Coxeter group of type $\cB_{a_1} \cB_{a_2}$ on $Y^{\perp}$, and
  on $X^{\perp}$ in such a way that $P \rtimes B$ is a Coxeter group
  of type $\cB_{n-m}\cB_{a_2}\cA_2^{a_3} \dotsm \cA_{m-1}^{a_m}$ on $X^{\perp}$.
\end{proposition}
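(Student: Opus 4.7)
The plan is to instantiate the general framework of Sections~\ref{sec:goursat}--\ref{sec:B} for $W = W(\cB_n)$, working with signed permutations on $\{1, \dots, n\}$ and the explicit generators $x_i, y_j$ from Example~\ref{ex:B}. First I would identify $Q = \cc{P}$; then determine the full Howlett complement of $P$ in its normalizer in $W$; and finally apply Propositions~\ref{pro:A} and~\ref{pro:B} to extract $A$ and $B$ from the Howlett complement $D$ of $PQ$, verifying the reflection actions at the end.

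To compute $Q$, I would enumerate the reflections of $W(\cB_n)$ commuting with every reflection of $P = W(\cB_l) \times \prod_i \Sym_{\lambda_i}$. Case analysis shows that no reflection touching a position in $\{1, \dots, l\}$ or in a block of size $\geq 3$ can centralize the corresponding factor of $P$. The surviving reflections are exactly the signed transpositions $x_i$ for $\lambda_i = \lambda_{i+1} = 1$, the sign changes $y_j$ for $\lambda_j = 1$, and the reflections $y_j$ for $\lambda_j = 2$ (with root $e_{u_j+1}+e_{u_j+2}$, which commutes with the unique generator of the $\Sym_2$-factor on that block). Together these generate $Q$ of type $\cB_{a_1}\cA_1^{a_2}$. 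A length-preservation check then shows that each $x_i$ with $\lambda_i = \lambda_{i+1}$ and each $y_j$ preserves the positive roots of $P$: $x_i$ interchanges the root systems of two adjacent equal-size blocks, and $y_j$ induces the graph automorphism $\alpha_k \mapsto \alpha_{\lambda_j-k}$ on block $j$. A size count against $|N|/|P|$, obtained from the classical normalizer formula for Young-type subgroups of $W(\cB_n)$, confirms that these generators exhaust the Howlett complement of $P$ in $N$, which is therefore $\prod_k W(\cB_{a_k})$, with the $k$-th factor generated by $\{x_i : \lambda_i = \lambda_{i+1} = k\}$ and $\{y_j : \lambda_j = k\}$. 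Since $Q$ absorbs the entire $k=1$ factor together with the sign-change subgroup $(\mathbb{Z}/2)^{a_2}$ of the $k=2$ factor, the Howlett complement $D$ of $PQ$ is $\Sym_{a_2} \times \prod_{k \geq 3} W(\cB_{a_k})$.

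Applying Proposition~\ref{pro:A}, the space $Y^\perp$ is spanned by the roots of $Q$, namely the fixed-point basis vectors and the sums $e_{u_j+1}+e_{u_j+2}$ for $\lambda_j = 2$. Then $\ccc{P} = Z_W(Y^\perp)$ contains the $x_i, y_j$'s for block size $\geq 3$ (their supports are orthogonal to $Y^\perp$) but excludes the $x_i$ with $\lambda_i = \lambda_{i+1} = 2$ (which interchange two distinct roots of $Q$), so $A = D \cap \ccc{P} = \prod_{k \geq 3} W(\cB_{a_k})$. Applying Proposition~\ref{pro:B}, $\overline{PQ} = Z_W(X \cap Y)$ where $X \cap Y$ is spanned by the diagonals $d_i = e_{u_i+1}+\cdots+e_{u_{i+1}}$ for $\lambda_i \geq 3$; the generators of the $k \geq 3$ factor of $D$ either swap two distinct $d_i$'s or negate one, while the $x_i$ with $\lambda_i = \lambda_{i+1} = 2$ have disjoint support from every $d_i$, so $B = D \cap \overline{PQ} \cong \Sym_{a_2}$.

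Finally, I would verify the three reflection actions by tracking the generators on $V = X^\perp \oplus (X \cap Y) \oplus Y^\perp$. On $X \cap Y$, $A$ acts by permuting and negating the $d_i$'s of equal block size, giving $\prod_{k \geq 3} W(\cB_{a_k})$. On $Y^\perp$, $B = \Sym_{a_2}$ permutes the sums $e_{u_j+1}+e_{u_j+2}$ and combines with the $(\mathbb{Z}/2)^{a_2}$ part of $Q$ into $W(\cB_{a_2})$, so $Q \rtimes B$ is of type $\cB_{a_1}\cB_{a_2}$. On $X^\perp$, where $P$ acts as $\cB_l \cA_{\lambda_1-1}\cdots\cA_{\lambda_p-1}$, the subgroup $B$ permutes the $a_2$ one-dimensional traceless subspaces of the $\lambda_i = 2$ blocks, upgrading $\Sym_2^{a_2}$ to $W(\cB_{a_2})$ and giving $P \rtimes B$ the type $\cB_l\cB_{a_2}\cA_2^{a_3}\cdots\cA_{m-1}^{a_m}$. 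The main obstacle I anticipate is the delicate three-way separation of the generators $x_i, y_j$ into $Q$, $A$, and $B$ according to block size: $\lambda_j = 1$ places $y_j$ in $Q$; $\lambda_j = 2$ places the reflection $y_j$ in $Q$ while the block swap $x_i$ (for $\lambda_i = \lambda_{i+1} = 2$) goes into $B$; and $\lambda_j \geq 3$ places both $y_j$ (now a non-reflection order-$2$ element acting as a graph automorphism) and $x_i$ in $A$.
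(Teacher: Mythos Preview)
Your proposal is correct and follows essentially the same route as the paper's proof: identify $Q$ explicitly as $\cB_{a_1}\cA_1^{a_2}$, use Proposition~\ref{pro:A} to read off $A = D \cap \ccc{P}$ from the \galois\ closure, use Proposition~\ref{pro:B} to read off $B = D \cap \overline{PQ}$ from the parabolic closure, and then track the actions on $X^\perp$, $X \cap Y$, $Y^\perp$ via the $(-1)$-eigenspaces of the generators $x_i$, $y_j$. Your version is somewhat more detailed than the paper's (which simply names $\ccc{P}$ as the parabolic of type $\cB_{n-a_1-2a_2}\cA_1^{a_2}$ and $\overline{PQ}$ as the parabolic of type $\cB_{l+a_1+2a_2}\prod_{k>2}\Sym_k^{a_k}$ without spelling out the $Y^\perp$ and $X\cap Y$ bases), but the arguments are the same in substance.
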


\begin{proof}
  The \galois\ complement of $P$ is the subgroup
  \begin{align*}
    Q = \Span{x_i,\, y_j \mid \lambda_i = \lambda_{i+1} = 1,\, \lambda_j
    \leq 2}
  \end{align*}
  of type $\cB_{a_1} \cA_1^{a_2}$.  The \galois\ closure of $P$ is
the unique standard parabolic subgroup
of type
$\cB_{n-a_1-2a_2} \cA_1^{a_2}$ containing $P$.  The Howlett complement $A$
of $P$ in $\ccc{P}$ is the subgroup
$\Span{x_i, y_j \mid \lambda_i = \lambda_{i+1} > 2,\, \lambda_j > 2}$
which is
isomorphic to a Coxeter group of type $\cB_{a_3} \cdots \cB_{a_m}$.
The parabolic closure $\overline{PQ}$ of the reflection subgroup $PQ$
is the unique parabolic subgroup isomorphic to
\begin{align*}
  W(\cB_{n-m+a_1+2a_2}) \prod_{k>2} \Sym_k^{a_k}
\end{align*}
containing $P$ and $Q$.
The Howlett complement of $PQ$ in $N \cap \overline{PQ}$ is
\begin{align*}
  B = \Span{x_i \mid \lambda_i = \lambda_{i+1} = 2} \cong \Sym_{a_2}\text.
\end{align*}

  As in the proof of Proposition~\ref{pro:An}, the $k$-dimensional $(-1)$-eigenspace of an
  element $x_i$ intersects the fixed point space $X$ of $P$ in $V$ in a $1$-dimensional subspace.  Any $x_i \in Q$ has degree $k = 1$ and acts on $Y^{\perp}$.
  Any $x_i \in B$ has degree $k = 2$ and acts on both $Y^{\perp}$ and $X^{\perp}$ in such a way that $Q \rtimes B$ is a Coxeter group of type $\cB_{a_1} \cB_{a_2}$ on $Y^{\perp}$, and affording a reflection subgroup $\Sym_2^{a_2} \rtimes B$ of type $\cB_{a_2}$ on $X^{\perp}$.
  Any $x_i \in A$ has degree $k > 2$ and acts as a reflection on $X \cap Y$.
  The $(-1)$-eigenspace of $y_j$ has dimension $\lceil \frac{\lambda_j}{2} \rceil$ and is contained in $Y^{\perp}$ if $\lambda_j \leq 2$.
  Otherwise, if $\lambda_j > 2$, it intersects $X \cap Y$ in a
  $1$-dimensional subspace, so that $A$ acts as a reflection group of type
  $\cB_{a_3} \dotsm \cB_{a_m}$ on $X \cap Y$.
\end{proof}

Proposition~\ref{pro:Bn} is illustrated for $W(\cB_5)$ and $W(\cB_6)$ in Tables~\ref{tab:b5} and~\ref{tab:b6}, respectively.
(See Section~\ref{sec:howto} for a description of the table layout.)

\begin{table}[htp]
  \begin{center}
  \begin{tabular}{r|cc|cc|ccc|ccc}\hline
 & $P$ {\scriptsize$[\lambda]$} & $Q$ & $|D|$ & $\overline{PQ}$ & $A$ & $B$ & $C$ & $X^{\perp}$ & $X \cap Y$ & $Y^{\perp}$ \\\hline\hline
 $*_{8}$ & ${\cB}_1 {\cA}_1^2$ $_{[22]}$ &  $_{5}$ & $2$ &  & & ${\cA}_1$ & &
 {\tiny\tikz[baseline]{\draw[double] (0,0) node {$\bullet$} -- (-0.2,0) node[white] {$\bullet$} node {$\circ$};\draw (0.3,0) node {$\bullet$};}} &
 {} &
 {\tiny\tikz[baseline]{\draw[double] (0,0) node {$\bullet$} -- (-0.2,0) node[white] {$\bullet$} node {$\circ$};}}
 \\\hline
 $*_{5}$ & ${\cA}_1^2$ $_{[221]}$ &  $_{8}$ & $2$ &  & & ${\cA}_1$ & &
 {\tiny\tikz[baseline]{\draw[double] (0,0) node {$\bullet$} -- (-0.2,0) node[white] {$\bullet$} node {$\circ$};}} &
 {} &
 {\tiny\tikz[baseline]{\draw[double] (0,0) node {$\bullet$} -- (-0.2,0) node[white] {$\bullet$} node {$\circ$};\draw (0.3,0) node {$\bullet$};}}
 \\\hline\hline
 $*_{11}$ & ${\cB}_2 {\cA}_1$ $_{[21]}$ &  $_{4}$ & $1$ &  & & & &
 {\tiny\tikz[baseline]{\draw[double] (0,0) node {$\bullet$} -- (-0.2,0) node {$\bullet$};\draw (0.3,0) node {$\bullet$};}} &
 {} &
 {\tiny\tikz[baseline]{\draw (0,0) node {$\bullet$} (0.3,0) node {$\bullet$};}}
 \\\hline
 $*_{4}$ & ${\cB}_1 {\cA}_1$ $_{[211]}$ &  $_{11}$ & $1$ &  & & & &
 {\tiny\tikz[baseline]{\draw (0,0) node {$\bullet$} (0.3,0) node {$\bullet$};}} &
 {} &
 {\tiny\tikz[baseline]{\draw[double] (0,0) node {$\bullet$} -- (-0.2,0) node {$\bullet$};\draw (0.3,0) node {$\bullet$};}}
 \\\hline\hline
 $_{6}$ & ${\cA}_{2}$ $_{[311]}$ &  $_{7}$ & $2$ & $_{(14)}$ & ${\cA}_1$ & & &
 \smash{\tiny\tikz[baseline]{\draw (0,0) node {$\bullet$} -- node[above] {$_6$} (-0.2,0) node[white] {$\bullet$} node {$\circ$};}} &
 {\tiny\tikz[baseline]{\draw (0,0) node {$\circ$};}} &
 {\tiny\tikz[baseline]{\draw[double] (0,0) node {$\bullet$} -- (-0.2,0) node {$\bullet$};}}
 \\
 $*_{13}$ & ${\cB}_{3}$ $_{[11]}$ &  $_{7}$ & $1$ &  & & & &
 {\tiny\tikz[baseline]{\draw[double] (0,0) node {$\bullet$} -- (-0.2,0) node {$\bullet$};\draw (0,0) -- (0.2,0) node {$\bullet$};}} &
 {} &
 {\tiny\tikz[baseline]{\draw[double] (0,0) node {$\bullet$} -- (-0.2,0) node {$\bullet$};}}
 \\\hline
 $*_{7}$ & ${\cB}_{2}$ $_{[111]}$ &  $_{13}$ & $1$ &  & & & &
 {\tiny\tikz[baseline]{\draw[double] (0,0) node {$\bullet$} -- (-0.2,0) node {$\bullet$};}} &
 {} &
 {\tiny\tikz[baseline]{\draw[double] (0,0) node {$\bullet$} -- (-0.2,0) node {$\bullet$};\draw (0,0) -- (0.2,0) node {$\bullet$};}}
 \\\hline\hline
 $_{10}$ & ${\cA}_2 {\cA}_1$ $_{[32]}$ &  $_{3}$ & $2$ & $_{14}$ & ${\cA}_1$ & & &
 \smash{\tiny\tikz[baseline]{\draw (0,0) node {$\bullet$} -- node[above] {$_6$} (-0.2,0) node[white] {$\bullet$} node {$\circ$} (0.3,0) node {$\bullet$};}} &
 {\tiny\tikz[baseline]{\draw (0,0) node {$\circ$};}} &
 {\tiny\tikz[baseline]{\draw (0,0) node {$\bullet$};}}
 \\
 $*_{16}$ & ${\cB}_3 {\cA}_1$ $_{[2]}$ &  $_{3}$ & $1$ &  & & & &
 {\tiny\tikz[baseline]{\draw[double] (0,0) node {$\bullet$} -- (-0.2,0) node {$\bullet$};\draw (0,0) -- (0.2,0) node {$\bullet$}  (0.5,0) node {$\bullet$};}} &
 {} &
 {\tiny\tikz[baseline]{\draw (0,0) node {$\bullet$};}}
 \\\hline
 $*_{3}$ & ${\cA}_{1}$ $_{[2111]}$ &  $_{16}$ & $1$ &  & & & &
 {\tiny\tikz[baseline]{\draw (0,0) node {$\bullet$};}} &
 {} &
 {\tiny\tikz[baseline]{\draw[double] (0,0) node {$\bullet$} -- (-0.2,0) node {$\bullet$};\draw (0,0) -- (0.2,0) node {$\bullet$}  (0.5,0) node {$\bullet$};}}
 \\\hline\hline
 $_{9}$ & ${\cB}_1 {\cA}_2$ $_{[31]}$ &  $_{2}$ & $2$ & $_{14}$ & ${\cA}_1$ & & &
 \smash{\tiny\tikz[baseline]{\draw (0,0) node {$\bullet$} -- node[above] {$_6$} (-0.2,0) node[white] {$\bullet$} node {$\circ$} (0.3,0) node {$\bullet$};}} &
 {\tiny\tikz[baseline]{\draw (0,0) node {$\circ$};}} &
 {\tiny\tikz[baseline]{\draw (0,0) node {$\bullet$};}}
 \\
 $_{12}$ & ${\cA}_{3}$ $_{[41]}$ &  $_{2}$ & $2$ & $_{(15)}$ & ${\cA}_1$ & & &
 {\tiny\tikz[baseline]{\draw[double] (0,0) node {$\bullet$} -- (-0.2,0) node[white] {$\bullet$} node {$\circ$};\draw (0,0) -- (0.2,0) node {$\bullet$};}} &
 {\tiny\tikz[baseline]{\draw (0,0) node {$\circ$};}} &
 {\tiny\tikz[baseline]{\draw (0,0) node {$\bullet$};}}
 \\
 $*_{18}$ & ${\cB}_{4}$ $_{[1]}$ &  $_{2}$ & $1$ &  & & & &
 {\tiny\tikz[baseline]{\draw[double] (0,0) node {$\bullet$} -- (-0.2,0) node {$\bullet$};\draw (0,0) -- (0.2,0) node {$\bullet$} -- (0.4,0) node {$\bullet$};}} &
 {} &
 {\tiny\tikz[baseline]{\draw (0,0) node {$\bullet$};}}
 \\\hline
 $*_{2}$ & ${\cB}_{1}$ $_{[1111]}$ &  $_{18}$ & $1$ &  & & & &
 {\tiny\tikz[baseline]{\draw (0,0) node {$\bullet$};}} &
 {} &
 {\tiny\tikz[baseline]{\draw[double] (0,0) node {$\bullet$} -- (-0.2,0) node {$\bullet$};\draw (0,0) -- (0.2,0) node {$\bullet$} -- (0.4,0) node {$\bullet$};}}
 \\\hline\hline
 $_{14}$ & ${\cB}_2 {\cA}_2$ $_{[3]}$ & $\emptyset$ & $2$ & $_{(14)}$ & ${\cA}_1$ & & &
 \smash{\tiny\tikz[baseline]{\draw (0,0) node {$\bullet$} -- node[above] {$_6$} (-0.2,0) node[white] {$\bullet$} node {$\circ$}; \draw[double] (0.3,0) node {$\bullet$} -- (0.5,0) node {$\bullet$};}} &
 {\tiny\tikz[baseline]{\draw (0,0) node {$\circ$};}} &
 {}
 \\
 $_{15}$ & ${\cB}_1 {\cA}_3$ $_{[4]}$ & $\emptyset$ & $2$ & $_{(15)}$ & ${\cA}_1$ & & &
 {\tiny\tikz[baseline]{\draw[double] (0,0) node {$\bullet$} -- (-0.2,0) node[white] {$\bullet$} node {$\circ$};\draw (0,0) -- (0.2,0) node {$\bullet$} (0.5,0) node {$\bullet$};}} &
 {\tiny\tikz[baseline]{\draw (0,0) node {$\circ$};}} &
 {}
 \\
 $_{17}$ & ${\cA}_{4}$ $_{[5]}$ & $\emptyset$ & $2$ & $_{(17)}$ & ${\cA}_1$ & & &
 {\tiny$^2($\tikz[baseline]{\draw (0,0) node {$\bullet$} -- (0.2,0) node {$\bullet$} -- (0.4,0) node {$\bullet$} -- (0.6,0) node {$\bullet$};}$)$} &
 {\tiny\tikz[baseline]{\draw (0,0) node {$\circ$};}} &
 {}
 \\
 $*_{19}$ & ${\cB}_{5}$ $_{[]}$ & $\emptyset$ & $1$ &  & & & &
 {\tiny\tikz[baseline]{\draw[double] (0,0) node {$\bullet$} -- (-0.2,0) node {$\bullet$};\draw (0,0) -- (0.2,0) node {$\bullet$} -- (0.4,0) node {$\bullet$} -- (0.6,0) node {$\bullet$};}} &
 {} &
 {}
 \\\hline
 $*_{1}$ & $\emptyset$ $_{[11111]}$ &  $_{19}$ & $1$ &  & & & &
 {} &
 {} &
 {\tiny\tikz[baseline]{\draw[double] (0,0) node {$\bullet$} -- (-0.2,0) node {$\bullet$};\draw (0,0) -- (0.2,0) node {$\bullet$} -- (0.4,0) node {$\bullet$} -- (0.6,0) node {$\bullet$};}}
 \\\hline
  \end{tabular}
\end{center}

  \caption{Decomposition~\eqref{eq:decompose} for the case of $\cB_5$.}\label{tab:b5}
\end{table}

\begin{table}[htp]
  \input{b6compact}
  \caption{Decomposition~\eqref{eq:decompose} for the case of $\cB_6$.}\label{tab:b6}
\end{table}

\section{Closure} 
\label{sec:C}

In general, by the application of Goursat's Lemma in
Section~\ref{sec:refine}, $D / (A \times B) = C$.  We have seen in the
previous sections that $C$ is trivial if $W$ is of type $\cA_n$ or
$\cB_n$.  Here we describe the normalizer of a parabolic subgroup of
$W(\cD_n)$ and subsequently list all cases where $C$ is not trivial.

\begin{example}\label{ex:D}
  Let $W$ be of type $\cD_n$, represented as a normal subgroup of
  index $2$ in a Coxeter group $W'$ of type $\cB_n$ acting as signed
  permutations on $\{1,\dots,n\}$.  We distinguish three kinds of
  classes of parabolic subgroups in terms of their labels $\lambda$,
  and discuss them one by one:
  \begin{enumerate}
  \item[(i)] $\lambda$ is an even partition of $n$,
  \item[(ii)] $\lambda$ is a partition of $m < n-1$,
  \item[(iii)] $\lambda$ is a partition of $n$ with at least one odd part.
  \end{enumerate}
  As before in Examples~\ref{ex:A} and~\ref{ex:B}, we have elements
  $x_i \in \Sym_n \subseteq W(\cD_n)$ and $y_i \in W(\cB_n)$, where
  $y_i \in W(\cD_n)$ if and only if $\lambda_i$ is even.

  (i) Let $\lambda = (n^{a_n}, \dots, 1^{a_1})$ be an even partition
  of $n$, i.e., $a_k = 0$ unless $k$ is even.  Let
  $P = W_{\lambda^+} = \prod_i \Sym_{\lambda_i}$. Then $P$ is a
  standard parabolic subgroup of $W$ with label $\lambda^+$.  The
  Howlett complement of $P$ is a direct product of Coxeter groups
  $W(\cB_{a_k})$, as follows.  (The results for the class labelled
  $\lambda^-$ are similar and follow by applying a graph automorphism
  to $W$.)

\end{example}

\begin{proposition}\label{pro:D1}
  If $P$ is a parabolic subgroup with label
  $\lambda^+$ or $\lambda^-$
  of a Coxeter group of type $\cD_n$, where $\lambda = (\lambda_1, \dots, \lambda_l) = (n^{a_n}, \dots, 1^{a_1})$ is an even partition of $n$, then its Howlett complement in~$W$ is isomorphic to a Coxeter group $\prod_k W(\cB_{a_k})$.
  More precisely, with the above notation and choice of $P$, we have
  \begin{align*}
    N = (P \times Q) \rtimes (A \times B)\text,
  \end{align*}
  where
  \begin{itemize}
  \item $Q = \Span{y_j  \mid  \lambda_j = 2}$ is of type $\cA_1^{a_2}$,
  \item
    $A = \Span{x_i, y_j \mid \lambda_i = \lambda_{i+1} > 2,\,
      \lambda_j > 2}$ is of type $\cB_{a_4} \dotsm \cB_{a_n}$, and
  \item $B = \Span{x_i  \mid  \lambda_i = \lambda_{i+1} = 2} = \Sym_{a_2}$.
  \end{itemize}
  Moreover, $A$ acts as a reflection group on $X \cap Y$, while $B$
  acts as a reflection group on $Y^{\perp}$ in such a way that $Q \rtimes B$
  is a Coxeter group of type $\cB_{a_2}$ on $Y^{\perp}$, and
  on $X^{\perp}$ in such a way that $P \rtimes B$ is a Coxeter group
  of type $\cB_{a_2}\cA_3^{a_4} \dotsm \cA_{n-1}^{a_n}$ on $X^{\perp}$.
\end{proposition}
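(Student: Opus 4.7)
My plan is to deduce Proposition~\ref{pro:D1} from the already-established Proposition~\ref{pro:Bn} by showing that, under the evenness hypothesis on $\lambda$, the full $W(\cB_n)$-normalizer of $P$ already sits inside $W(\cD_n)$; consequently $N_{W(\cD_n)}(P) = N_{W(\cB_n)}(P)$, and the claimed structure is obtained by specializing the $\cB_n$-decomposition of Proposition~\ref{pro:Bn}.

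First I would verify that each generator produced in the $\cB_n$ Howlett complement lies in $W(\cD_n)$. The elements $x_i$ are pure permutations, so they lie in $\Sym_n \leq W(\cD_n)$; the elements $y_j$ perform exactly $\lambda_j$ sign changes, and $\lambda_j$ being even for every $j$ forces $y_j \in W(\cD_n)$. Combined with $P \leq W(\cD_n)$, this yields $N_{W(\cB_n)}(P) \leq W(\cD_n)$, hence the coincidence of normalizers.

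Next I would unpack Proposition~\ref{pro:Bn} under the constraints $a_1 = 0$ and $a_k = 0$ for odd $k$ forced by $\lambda$ even: the factor $Q$ loses its $\cB_{a_1}$ part and becomes $\cA_1^{a_2}$ generated by the $y_j$ with $\lambda_j = 2$; the factor $A$ retains only the $\cB_{a_k}$ pieces with $k \geq 4$; and $B \cong \Sym_{a_2}$ survives unchanged. This is precisely the decomposition $(P \times Q) \rtimes (A \times B)$ asserted here. The identifications of $Q$ as the \galois\ complement of $P$, of $A$ as the Howlett complement of $P$ in $\ccc{P}$, and of $B$ as the Howlett complement of $PQ$ in $\overline{PQ}$, then follow either by invoking Propositions~\ref{pro:A} and~\ref{pro:B} in the $\cD_n$ setting, or directly: the orthogonality test on roots $e_a \pm e_b$ gives $\cc{P} = Q$, $\ccc{P} = \Sym_2^{a_2} \times W(\cD_{n-2a_2})$, and $\overline{PQ} = (\prod_{\lambda_i > 2} \Sym_{\lambda_i}) \times W(\cD_{2a_2})$.

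The action statements follow by a block-wise analysis. On each size-$2$ block $\{u_j+1, u_j+2\}$, $X = \Fix_V(P)$ is spanned by $e_{u_j+1}+e_{u_j+2}$ while $Y = \Fix_V(Q)$ is spanned by $e_{u_j+1}-e_{u_j+2}$; on each bigger block, $X$ is spanned by the diagonal $v_i = \sum_l e_{u_i+l}$ and $Y$ contains the entire block. Thus $Y^\perp$ has basis $\{e_{u_j+1}+e_{u_j+2} : \lambda_j = 2\}$ and $X \cap Y$ has basis $\{v_i : \lambda_i > 2\}$, and the claimed reflection-group structures are obtained by tracing how each $x_i, y_j$ permutes or negates those basis vectors. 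The main subtlety is the first step --- one must be sure no element of $N_{W(\cD_n)}(P)$ escapes the $\cB_n$-argument --- but this is automatic here because the $\cB_n$-normalizer lies inside $W(\cD_n)$. For partitions with odd parts (treated in Propositions~\ref{pro:D2} and~\ref{pro:D3}), this coincidence breaks down and the subgroup $C$ can become non-trivial, which is why those cases require separate analysis.
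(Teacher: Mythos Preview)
Your proof is correct. The key observation---that for an even partition $\lambda$ every generator $x_i$, $y_j$ of the Howlett complement computed in $W(\cB_n)$ already lies in $W(\cD_n)$, forcing $N_{W(\cB_n)}(P) = N_{W(\cD_n)}(P)$---is valid, and specializing Proposition~\ref{pro:Bn} under $l=0$, $a_k=0$ for $k$ odd, does yield exactly the decomposition and action statements claimed. Your direct verification of $\cc{P}$, $\ccc{P}$ and $\overline{PQ}$ via the root pairing is also sound.

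The paper's own proof of this proposition proceeds differently: it works intrinsically inside $W(\cD_n)$, identifying $Q$, $\ccc{P}$, $A$, $\overline{PQ}$ and $B$ directly, without any appeal to the ambient $W(\cB_n)$. Your reduction strategy is in fact the one the paper reserves for Proposition~\ref{pro:D3} (the case of a non-even $\lambda \vdash n$), where $N_{W(\cD_n)}(P)$ has index~$2$ in $N_{W(\cB_n)}(P)$ and the intersection must be computed explicitly. What you have effectively noticed is that the same reduction applies uniformly, and in the even case it is even simpler because the index is~$1$. This buys a shorter argument here and a pleasing uniformity across the $\cD_n$ cases; the paper's direct approach, on the other hand, makes the $\cD_n$-intrinsic structures ($\ccc{P}$ of type $\cD_{n-2a_2}\cA_1^{a_2}$, $\overline{PQ}$ of type $\cD_{2a_2}\prod_{k>2}\cA_{k-1}^{a_k}$) visible without a detour through $\cB_n$.
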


\begin{proof}
    The \galois\ complement of $P$ is the subgroup
  $Q = \Span{y_j \mid \lambda_j = 2}$ of type $\cA_1^{a_2}$.
  The \galois\ closure of $P$ is the unique standard parabolic
  subgroup $\ccc{P}$ of type
  $\cD_{n-2a_2}\cA_1^{a_2}$ containing~$P$.  The Howlett complement $A$ of
  $P$ in $\ccc{P}$ is the subgroup
  $\Span{x_i, y_j \mid \lambda_i = \lambda_{i+1} > 2,\, \lambda_j >
    2}$ which is isomorphic to a Coxeter group of type
  $\cB_{a_4} \dotsm \cB_{a_n}$.
  The parabolic closure $\overline{PQ}$ of the reflection subgroup
  $PQ$ is the unique parabolic subgroup isomorphic to
  $W(\cD_{2a_2}) \prod_{k > 2} \Sym_k^{a_k}$ containing $P$ and $Q$.
  The Howlett complement $B$ of $PQ$ in $N \cap \overline{PQ}$ is
  the subgroup $B = \Span{x_i \mid \lambda_i = \lambda_{i+1} = 2} \cong \Sym_{a_2}$.

  Similar to the proof of Proposition~\ref{pro:Bn}, $Q \rtimes B$ is a Coxeter group of type
  $\cB_{a_2}$ on $Y^{\perp}$. And $B$ also acts on $X^{\perp}$,
  affording a reflection group $\Sym_2^{a_2} \rtimes B$ of type $\cB_{a_2}$.
  Moreover, $A$ acts as a reflection group of type
  $\cB_{a_4} \dotsm \cB_{a_n}$ on $X \cap Y$.
\end{proof}

Proposition~\ref{pro:D1} is illustrated for $W(\cD_6)$ in Table~\ref{tab:d6}.
(See Section~\ref{sec:howto} for a description of the table layout.)

\begin{table}[htp]
  \begin{center}
  \begin{tabular}{r|cc|cc|ccc|ccc}\hline
 & $P$ {\scriptsize$[\lambda]$} & $Q$ & $|D|$ & $\overline{PQ}$ & $A$ & $B$ & $C$ & $X^{\perp}$ & $X \cap Y$ & $Y^{\perp}$ \\\hline\hline
 $*_{4}$ & ${\cA}_1^2$ $_{[221]}$ &  $_{4}$ & $2$ & $_{13}$ & & ${\cA}_1$ & &
 {\tiny\tikz[baseline]{\draw[double] (0,0) node {$\bullet$} -- (-0.2,0);\draw (-0.2,0) node[white] {$\bullet$};\node (2) at (-0.2,0) {$\circ$};}} &
 {} &
 {\tiny\tikz[baseline]{\draw[double] (0,0) node {$\bullet$} -- (-0.2,0);\draw (-0.2,0) node[white] {$\bullet$};\node (2) at (-0.2,0) {$\circ$};}}
 \\\hline\hline
 $_{5}$ & ${\cA}_{2}$ $_{[311]}$ &  $_{3}$ & $2$ &  $_{(10)}$ & & & ${\cA}_1$ &
 \smash{\tiny\tikz[baseline]{\draw (0,0) node {$\bullet$} -- node[above] {$_6$} (-0.2,0);\draw (-0.2,0) node[white] {$\bullet$};\node (2) at (-0.2,0) {$\circ$};}} &
 {\tiny\tikz[baseline]{\draw (0,0) node{$\circ$};}} &
 {\tiny\tikz[baseline]{\draw[double] (0,0) node {$\bullet$} -- (-0.2,0);\draw (-0.2,0) node[white] {$\bullet$};\node (2) at (-0.2,0) {$\circ$};}}
 \\
 $_{8}$ & ${\cD}_{3}$ $_{[11]}$ &  $_{3}$ & $2$ & & & ${\cA}_1$ & &
 {\tiny\tikz[baseline]{\draw[double] (0,0) node {$\bullet$} -- (-0.2,0);\draw (-0.2,0) node[white] {$\bullet$};\node (2) at (-0.2,0) {$\circ$};\draw (0,0) -- (0.2,0) node {$\bullet$};}} &
 {} &
 {\tiny\tikz[baseline]{\draw[double] (0,0) node {$\bullet$} -- (-0.2,0);\draw (-0.2,0) node[white] {$\bullet$};\node (2) at (-0.2,0) {$\circ$};}}
 \\\hline
 $*_{3}$ & ${\cD}_{2}$ $_{[111]}$ &  $_{8}$ & $2$ & & & ${\cA}_1$ & &
 {\tiny\tikz[baseline]{\draw[double] (0,0) node {$\bullet$} -- (-0.2,0);\draw (-0.2,0) node[white] {$\bullet$};\node (2) at (-0.2,0) {$\circ$};}} &
 {} &
 {\tiny\tikz[baseline]{\draw[double] (0,0) node {$\bullet$} -- (-0.2,0);\draw (-0.2,0) node[white] {$\bullet$};\node (2) at (-0.2,0) {$\circ$};\draw (0,0) -- (0.2,0) node {$\bullet$};}}
 \\\hline\hline
 $*_{6}$ & ${\cD}_2 {\cA}_1$ $_{[21]}$ &  $_{2}$ & $2$ & $_{13}$ & ${\cA}_1$ & & &
 {\tiny\tikz[baseline]{\draw[double] (0,0) node {$\bullet$} -- (-0.2,0);\draw (-0.2,0) node[white] {$\bullet$};\node (2) at (-0.2,0) {$\circ$};\draw (0.3,0) node {$\bullet$};}} &
 {\tiny\tikz[baseline]{\draw (0,0) node{$\circ$};}} &
 {}
 \\
 $_{7}$ & ${\cA}_2 {\cA}_1$ $_{[32]}$ &  $_{2}$ & $1$ &  $_{(10)}$ & & & &
 {\tiny\tikz[baseline]{\draw (0,0) node {$\bullet$} -- (0.2,0) node {$\bullet$} (0.5,0) node {$\bullet$};}} &
 {} &
 {\tiny\tikz[baseline]{\draw (0,0) node {$\bullet$};}}
 \\
 $_{11}$ & ${\cD}_3 {\cA}_1$ $_{[2]}$ &  $_{2}$ & $1$ & & & & &
 {\tiny\tikz[baseline]{\draw (0,0) node {$\bullet$} -- (0.2,0) node {$\bullet$} -- (0.4,0) node {$\bullet$} (0.7,0) node {$\bullet$};}} &
 {} &
 {\tiny\tikz[baseline]{\draw (0,0) node {$\bullet$};}}
 \\\hline
 $*_{2}$ & ${\cA}_{1}$ $_{[2111]}$ &  $_{11}$ & $1$ & & & & &
 {\tiny\tikz[baseline]{\draw (0,0) node {$\bullet$};}} &
 {} &
 {\tiny\tikz[baseline]{\draw (0,0) node {$\bullet$} -- (0.2,0) node {$\bullet$} -- (0.4,0) node {$\bullet$} (0.7,0) node {$\bullet$};}}
 \\\hline\hline
 $_{9}$ & ${\cA}_{3}$ $_{[41]}$ & $\emptyset$ & $2$ &  $_{(9)}$ & ${\cA}_1$ & & &
 {\tiny\tikz[baseline]{\draw[double] (0,0) node {$\bullet$} -- (-0.2,0);\draw (-0.2,0) node[white] {$\bullet$};\node (2) at (-0.2,0) {$\circ$};\draw (0,0) -- (0.2,0) node {$\bullet$};}} &
 {\tiny\tikz[baseline]{\draw (0,0) node{$\circ$};}} &
 {}
 \\
 $_{10}$ & ${\cD}_2 {\cA}_2$ $_{[3]}$ & $\emptyset$ & $2$ &  $_{(10)}$ & ${\cA}_1$ & & &
 {\tiny$^2($\tikz[baseline]{\draw (0,0) node {$\bullet$} -- (0.2,0) node {$\bullet$} (0.5,0) node {$\bullet$} (0.8,0) node {$\bullet$};}$)$} &
 {\tiny\tikz[baseline]{\draw (0,0) node{$\circ$};}} &
 {}
 \\
 $_{12}$ & ${\cA}_{4}$ $_{[5]}$ & $\emptyset$ & $1$ &  $_{(12)}$ & & & &
 {\tiny\tikz[baseline]{\draw (0,0) node {$\bullet$} -- (0.2,0) node {$\bullet$} -- (0.4,0) node {$\bullet$} -- (0.6,0) node {$\bullet$};}} &
 {} &
 {}
 \\
 $*_{13}$ & ${\cD}_{4}$ $_{[1]}$ & $\emptyset$ & $2$ &  $_{(13)}$ & ${\cA}_1$ & & &
 {\tiny\tikz[baseline]{\draw[double] (0,0) node {$\bullet$} -- (-0.2,0);\draw (-0.2,0) node[white] {$\bullet$};\node (2) at (-0.2,0) {$\circ$};\draw (0,0) -- (0.2,0) node {$\bullet$} -- (0.4,0) node {$\bullet$};}} &
 {\tiny\tikz[baseline]{\draw (0,0) node{$\circ$};}} &
 {}
 \\
 $_{14}$ & ${\cD}_{5}$ $_{[]}$ & $\emptyset$ & $1$ & & & & &
 \smash{\tiny\tikz[baseline]{\draw (0,0) node {$\bullet$} -- (0.2,0) node {$\bullet$} -- (0.4,0) node {$\bullet$} -- (0.6,0) node {$\bullet$};\draw (0.2,0) -- (0.2,0.2) node {$\bullet$};}} &
 {} &
 {}
 \\
 \hline
 $*_{1}$ & $\emptyset$ $_{[11111]}$ &  $_{14}$ & $1$ & & & & & & &
 \smash{\tiny\tikz[baseline]{\draw (0,0) node {$\bullet$} -- (0.2,0) node {$\bullet$} -- (0.4,0) node {$\bullet$} -- (0.6,0) node {$\bullet$};\draw (0.2,0) -- (0.2,0.2) node {$\bullet$};}}
 \\\hline
  \end{tabular}
\end{center}


  \caption{Decomposition~\eqref{eq:decompose} for the case of $\cD_5$.}\label{tab:d5}
\end{table}

\begin{table}[htp]
  \input{d6compact}
  \caption{Decomposition~\eqref{eq:decompose} for the case of $\cD_6$.}\label{tab:d6}
\end{table}

We now consider case (ii) of Example~\ref{ex:D}.
  Let $\lambda = (m^{a_m}, \dots, 1^{a_1})$ be a partition of
  $m < n-1$.  Suppose
  $P = W(\cD_{n-m}) \prod_i \Sym_{\lambda_i}$ so that $P$ is a standard
  parabolic subgroup of $W$ with label $\lambda$.
  The Howlett complement of $P$ is a direct product of Coxeter
  groups $W(\cB_{a_k})$, as follows.
  Recall from Example~\ref{ex:B} that for each even part $\lambda_i$
  there is an element $y_i \in W(\cD_n)$.  Additionally, for each odd
  part $\lambda_i$, we define an element $z_i \in W(\cD_n)$ as
\begin{align*}
  v.{z_i} =
  \begin{cases}
    -1 & v = 1\text,\\
    v - u_{i+1} - 1, & u_i < v \leq u_{i+1}\text,\\
    v, & \text{else,}
  \end{cases}
\end{align*}
where  $u_i = n-m + \sum_{j=1}^{i-1} \lambda_j$.
Thus, e.g., $z_1 = (
\begin{smallmatrix}
 1& 2& 3& 4&5\\-1&2&-5&-4&-3
\end{smallmatrix}
)$ if $l = 2$ and $\lambda_1 = 3$.
We set $z_0 := z_j$ for
$j = \min\{i \mid \lambda_i = 1\}$ if $a_1 > 1$, and $z_0 := 1$
otherwise.

\renewcommand{\labelitemii}{$\bullet$}
\begin{proposition}\label{pro:D2}
  Let $W$ be a Coxeter group of type $\cD_n$.
  Let $P$ be a parabolic subgroup of $W$ with label $\lambda$, where
  $\lambda = (\lambda_1, \dots, \lambda_l) = (m^{a_m}, \dots, 1^{a_1})$ is a partition of $m < n-1$.
  Then its Howlett complement in~$W$ is
  isomorphic to a Coxeter group $\prod_k W(\cB_{a_k})$.  More
  precisely, with the above notation and choice of $P$, we have
  $N = ((P \rtimes A) \times Q) \rtimes B$, where the details depend
  on the value of $a_1$:
  \begin{itemize}
  \item [(a)] If $a_1 \leq 1$ then
  \begin{itemize}
  \item $Q = \Span{y_j \mid \lambda_j = 2}$ is of type $\cA_1^{a_2}$,
  \item $A = \Span{x_i, y_j, z_j \mid \lambda_i = \lambda_{i+1} > 2,\, \lambda_j \neq 2}$
    is of type $\cB_{a_1} \cB_{a_3} \cdots \cB_{a_m}$, and
  \item $B = \Span{x_i \mid \lambda_i = \lambda_{i+1} = 2} \cong \Sym_{a_2}$.
  \end{itemize}
  \item [(b)] If $a_1 > 1$ then
  \begin{itemize}
  \item $Q = \Span{x_i,\, y_j \mid \lambda_i = \lambda_{i+1} = 1,\, \lambda_j \leq 2}$ is of type $\cD_{a_1} \cA_1^{a_2}$,
  \item $A = \Span{x_i, y_j, z_j \mid \lambda_i = \lambda_{i+1} > 2,\, \lambda_j > 2}$
    is of type $\cB_{a_3} \cdots \cB_{a_m}$, and
  \item $B = \Span{x_i,\, z_0 \mid \lambda_i = \lambda_{i+1} = 2} \cong \Span{z_0} \times \Sym_{a_2}$.
  \end{itemize}
  \end{itemize}
  In any case, $A$ acts as a reflection group on $X \cap Y$, while $B$
  acts as a reflection group on $Y^{\perp}$ and on $X^{\perp}$.  Note
  that $Q \rtimes B$ is a Coxeter group of type $\cB_{a_1} \cB_{a_2}$ on
  $Y^{\perp}$, and
  that $P \rtimes B$ is a Coxeter group of type
  $\cB_{n-m}\cB_{a_2}\cA_2^{a_3} \dotsm \cA_{m-1}^{a_m}$ on $X^{\perp}$.
\end{proposition}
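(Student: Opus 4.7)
The plan is to mirror the proof of Proposition~\ref{pro:Bn}, adapting each step to the parity constraint of type $\cD_n$: only involutions with an even number of sign changes lie in $W$. The bifurcation into cases (a) and (b) is forced by a single observation, namely that the sign-change transposition $t_{a,b}$ between two fixed points uses two sign changes and therefore lies in $\cD_n$. Consequently, the fixed-point contribution to the \galois\ complement enlarges from trivial to a $W(\cD_{a_1})$ factor precisely when $a_1 \geq 2$.

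First I would identify $Q$ from Definition~\ref{def:complement} by a direct support analysis of the reflections commuting with every reflection of $P$: these are the $y_j$'s on $\lambda_j = 2$ blocks together with (only in case (b)) the full $W(\cD_{a_1})$ on the $a_1$ fixed points. Next, by Proposition~\ref{pro:A}, the subgroup $A$ is the Howlett complement of $P$ in $\ccc{P} = \cc{Q}$. A root-perpendicularity computation identifies $\ccc{P}$ as the standard parabolic of the claimed type, namely $\cD_{n-2a_2}\cA_1^{a_2}$ in case (a) and $\cD_{n-a_1-2a_2}\cA_1^{a_2}$ in case (b). The Howlett complement of $P$ inside $\ccc{P}$ is generated by the $x_i$'s, the $y_j$'s for even $\lambda_j > 2$, and the $z_j$'s for odd $\lambda_j$; the $z_j$'s use the $\cD_l$ factor at position~$1$ to absorb the extra sign change. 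In case (a) with $a_1 = 1$, the single $z_j$ with $\lambda_j = 1$ supplies the $\cB_{a_1}$ factor of $A$, while in case (b) the identity $z_j z_{j'} \in W(\cD_{a_1}) \subseteq Q$ shows that only one such element remains independent, and this one is relocated to $B$ as $z_0$.

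For $B$, by Proposition~\ref{pro:B}, I compute $\overline{PQ} = Z_W(X \cap Y)$. The space $X \cap Y$ equals $\mathrm{span}\{v_i : \lambda_i \geq 3\}$, so $\overline{PQ}$ has a $W(\cD_{l+a_1+2a_2})$ factor on the complementary coordinates and $\prod_{\lambda_i \geq 3} \Sym_{\lambda_i}$ on the remaining blocks. Its Howlett complement over $PQ$ contains the block-swaps $x_i$ for $\lambda_i = \lambda_{i+1} = 2$ (giving the $\Sym_{a_2}$ factor); in case (b) it additionally contains $z_0$, a double sign change that normalizes $W(\cD_l)$ as the graph automorphism via its sign change at position~$1$ and normalizes $W(\cD_{a_1})$ on the fixed points via an outer sign change, but does not lie in $PQ$. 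Since $z_0$ commutes with $\Sym_{a_2}$, this produces the direct product $\langle z_0 \rangle \times \Sym_{a_2}$. The main obstacle is precisely this case-(b) bookkeeping for $z_0$: verifying its membership in $\overline{PQ}$, its non-membership in $PQ$, and the fact that together with $Q$ and $P$ it enlarges $W(\cD_{a_1})$ to $W(\cB_{a_1})$ on the fixed-point part of $Y^\perp$ and $W(\cD_l)$ to $W(\cB_l)$ on the first $l$ coordinates of $X^\perp$, which underlies the stated Coxeter-group types for $Q \rtimes B$ and $P \rtimes B$. Once this is in place, the remaining action claims follow by tracking the projections of each generator through Lemma~\ref{la:theta0}, exactly as in the proofs of Propositions~\ref{pro:An} and~\ref{pro:Bn}.
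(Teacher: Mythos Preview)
Your proposal is correct and follows essentially the same route as the paper: identify $Q$ as the \galois\ complement, $A$ via Proposition~\ref{pro:A} as the Howlett complement of $P$ in $\ccc{P}$, and $B$ via Proposition~\ref{pro:B} as the Howlett complement of $PQ$ in $\overline{PQ}$, then read off the action statements from the explicit generators. Your treatment is in fact more careful than the paper's sketch in distinguishing the two cases --- in particular your observation that $\ccc{P}$ has type $\cD_{n-2a_2}\cA_1^{a_2}$ in case~(a) (the fixed point, if any, being absorbed into the $\cD$-factor) versus $\cD_{n-a_1-2a_2}\cA_1^{a_2}$ in case~(b), and your explanation of why the $z_j$ with $\lambda_j = 1$ collapse modulo $Q$ to the single element $z_0$ placed in $B$, are exactly the points the paper's proof glosses over.
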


\begin{proof}
  The \galois\ complement of $P$ is the subgroup
  \begin{align*}
    Q =  \Span{x_i, y_j \mid \lambda_i = \lambda_{i+1} = 1,\, \lambda_j = 2}
  \end{align*}
  of type $\cA_1^{a_2}$ if $a_1 \leq 1$, and of type $\cD_{a_1}\cA_1^{a_2}$ else.
The \galois\ closure of $P$ is
the unique standard parabolic subgroup
$\ccc{P}$ of type
$\cD_{n-a_1-2a_2} \cA_1^{a_2}$ containing $P$.
The Howlett complement $A$
of $P$ in $\ccc{P}$ is the subgroup
\begin{align*}
  \Span{x_i, y_j, z_j \mid \lambda_i = \lambda_{i+1} > 2,\, \lambda_j > 2},
\end{align*}
isomorphic to a Coxeter group of type $\cB_{a_3} \cdots \cB_{a_m}$.
The parabolic closure $\overline{PQ}$ of the reflection subgroup $PQ$
is the unique parabolic subgroup isomorphic to $W(\cD_{n-m+a_1+2a_2}) \prod_{k>2} \Sym_k^{a_k}$ containing $P$ and $Q$.
The Howlett complement $B$ of $PQ$ in $N \cap \overline{PQ}$ is
the subgroup $\Span{x_i,\, z_0 \mid \lambda_i = \lambda_{i+1} = 2} \cong \Sym_{a_2} \times \Span{z_0}$.
\end{proof}

Proposition~\ref{pro:D2} is illustrated for $W(\cD_5)$ and $W(\cD_6)$ in Tables~\ref{tab:d5} and~\ref{tab:d6}, respectively.
(See Section~\ref{sec:howto} for a description of the table layout.)

Finally, we address case (iii) of Example~\ref{ex:D}.
For this, let $\lambda$ be a partition of $n$ with at least one odd
  part.  Let $P = W_{\lambda} = \prod_i \Sym_{\lambda_i}$. Then $P$ is
  a standard parabolic subgroup of $W$ with label $\lambda$.  The
  Howlett complement of $P$ has index $2$ in a direct product of
  Coxeter groups $W(\cB_{a_k})$, as follows.  As before, there are
  the elements $x_i$, $y_j$ and $z_j$.  Additionally, for each $i$ such
  that $\lambda_i = \lambda_{i+1}$, we define
  \begin{align*}
      y_i' := x_i y_i y_{i+1} \in W(\cD_n)\text.
  \end{align*}
Thus, e.g.,  $y_1' = (\begin{smallmatrix}
  1 & 2 & 3 & 4 & 5 & 6 \\ -6 & -5 & -4 & -3 & -2 & -1
\end{smallmatrix})$ if $\lambda_1 = \lambda_2 = 3$.
Moreover, for each $i < j$ such that both $\lambda_i$ and $\lambda_j$
  are distinct and odd, we define
  \begin{align*}
    z_{ij}' := y_i y_j \in W(\cD_n)\text.
  \end{align*}
Thus, e.g.,  $z_{12}' = (\begin{smallmatrix}
  1 & 2 & 3 & 4 & 5 & 6 & 7 & 8 \\ -5 & -4 & -3 & -2 & -1 & -8 & -7 & -6
\end{smallmatrix})$ if $\lambda_1 = 5$ and $\lambda_2 = 3$.

Overall, the Howlett complement of $P$ in $W$ is generated by the following kinds of involutions:
  \begin{itemize}
  \item $x_i$, where $\lambda_i = \lambda_{i+1}$,
  \item $y_j$, where $\lambda_j$ is even,
  \item $y_j'$, where $\lambda_j = \lambda_{j+1}$ is odd,
  \item $z_{ij}'$, where $\lambda_i$ and $\lambda_j$ are distinct and odd,
  \end{itemize}
  as detailed in the following proposition.

\begin{proposition}\label{pro:D3}
  Let $W$ be a Coxeter group of type $\cD_n$.
  Let $P$ be a parabolic subgroup of $W$ with label $\lambda$, where
  $\lambda = (\lambda_1, \dots, \lambda_l) = (n^{a_n}, \dots, 1^{a_1})$
  is a partition of $n$ that is not even.
  Then its Howlett complement in~$W$ is isomorphic to an index
  $2$ subgroup of a Coxeter group $\prod_k W(\cB_{a_k})$.  More
  precisely, with the above notation and choice of $P$, we have
  $N = (P \times Q) \rtimes ((A \times B) \rtimes C)$, where the
  details depend on $a_1$:
  \begin{itemize}
  \item[(a)] If $a_1 \leq 1$ then
    \begin{itemize}
    \item $Q = \Span{y_j  \mid  \lambda_j = 2}$ is of type $\cA_1^{a_2}$,
    \item $A = A_0 \times A_1$, where
      \begin{align*}
      A_0 = \Span{x_i, y_j \mid \lambda_i = \lambda_{i+1} > 2 \text{ even},\,
        \lambda_j > 2 \text{ even}}
      \end{align*}
      is a group of type $\cB_{a_4} \cB_{a_6} \cB_{a_8} \dotsm$, and
      \begin{align*}
      A_1 = \Span{x_i, y_i', z_{pq}' \mid \lambda_i = \lambda_{i+1} > 2 \text{ odd},\,
        \lambda_p > \lambda_q \text{ odd}}
      \end{align*}
      is a subgroup of index $2$ in a group of type $\cB_{a_1} \cB_{a_3} \cB_{a_5} \dotsm$,
    \item $B = \Span{x_i  \mid  \lambda_i = \lambda_{i+1} = 2} = \Sym_{a_2}$, and
    \item $C = 1$.
    \end{itemize}
  \item[(b)] If $a_1 > 1$,  and $\lambda$ contains an odd part $\lambda_r > 1$, then
    \begin{itemize}
    \item $Q = \Span{x_i, y_i', y_j  \mid \lambda_i = \lambda_{i+1} = 1,\, \lambda_j = 2}$ is of type $\cD_{a_1} \cA_1^{a_2}$,
    \item $A = A_0 \times A_1$, where
      \begin{align*}
      A_0 = \Span{x_i, y_j \mid \lambda_i = \lambda_{i+1} > 2 \text{ even},\,
        \lambda_j > 2 \text{ even}}
      \end{align*}
      is a group of type $\cB_{a_4} \cB_{a_6} \cB_{a_8} \dotsm$, and
      \begin{align*}
      A_1 = \Span{x_i, y_i', z_{pq}' \mid \lambda_i = \lambda_{i+1} > 2 \text{ odd},\,
        \lambda_p > \lambda_q > 2 \text{ odd}}
      \end{align*}
      is a subgroup of index $2$ in a group of type $\cB_{a_3} \cB_{a_5} \dotsm$,
    \item $B = \Span{x_i  \mid  \lambda_i = \lambda_{i+1} = 2} = \Sym_{a_2}$, and
    \item $C = \Span{z_{rl}'}$, where $l = \ell(\lambda)$.
    \end{itemize}
  \end{itemize}
    In any case, $A_0$ acts as a reflection group on $X \cap Y$, while $B$
    acts as a reflection group on $Y^{\perp}$ in such a way that $Q \rtimes B$
    is a Coxeter group of type $\cB_{a_2}$ on $Y^{\perp}$, and
    on $X^{\perp}$ in such a way that $P \rtimes B$ is a Coxeter group
    of type $\cB_{a_2}\cA_3^{a_4} \dotsm \cA_{n-1}^{a_n}$ on $X^{\perp}$.
\end{proposition}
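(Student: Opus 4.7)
The plan is to parallel the proofs of Propositions~\ref{pro:Bn} and~\ref{pro:D2}, with the crucial new constraint that each candidate generating involution must lie in $W(\cD_n)$, not merely in the ambient $W(\cB_n)$. The element $y_j$, which flips signs on a block of size $\lambda_j$, lies in $W(\cD_n)$ precisely when $\lambda_j$ is even; so for odd part sizes the elementary sign-flip must be replaced by the ``doubled'' elements $y_i' = x_i y_i y_{i+1}$ (flipping signs on two equal odd blocks) and $z'_{ij} = y_i y_j$ (pairing two distinct odd blocks). These all lie in $W(\cD_n)$ by construction, and they will only generate an index-$2$ subgroup of the corresponding $\cB$-type Coxeter factor that $W(\cB_n)$ would provide.

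First I would identify $Q = \cc{P}$ as the reflection subgroup of $W$ generated by the reflections commuting with every reflection of $P$. Arguing as in the proof of Proposition~\ref{pro:D2}, a reflection centralizes $P$ if and only if it is a transposition $x_i$ between two singleton blocks (available only when $a_1 > 1$), or a sign flip supported on blocks of size at most~$2$; when $a_1 > 1$, the paired elements $y_i'$ combining consecutive singletons are additionally required to produce the full $\cD_{a_1}$ factor inside $W(\cD_n)$. This yields $Q \cong \cA_1^{a_2}$ in case~(a) and $Q \cong \cD_{a_1}\cA_1^{a_2}$ in case~(b).

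By Proposition~\ref{pro:A}, $A = D \cap \ccc{P}$. The \galois\ closure $\ccc{P} = \cc{Q}$ is the unique standard parabolic subgroup of type $\cD_{n-a_1-2a_2}\cA_1^{a_2}$ containing $P$, exactly as in Proposition~\ref{pro:D2}. The Howlett complement of $P$ inside $\ccc{P}$ restricted to each block size $k$ is centralized within the ambient wreath product $\Sym_k \wr \Sym_{a_k}$. For even $k \geq 4$, all generators of $W(\cB_{a_k})$ are already in $W(\cD_n)$, producing the factor $A_0$ of Coxeter type $\cB_{a_4}\cB_{a_6}\dotsm$. For odd $k$, only the index-$2$ subgroup of even-signed elements of $W(\cB_{a_k})$ lies in $W(\cD_n)$; this subgroup is generated by the $x_i$, the $y_i'$ and the $z'_{pq}$ pairing distinct odd block sizes, and yields $A_1$. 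Then $B$ is found exactly as in Propositions~\ref{pro:Bn} and~\ref{pro:D1}: by Proposition~\ref{pro:B}, $B$ is the Howlett complement of $PQ$ inside $\overline{PQ}$, which is the group $\Sym_{a_2}$ generated by the $x_i$ permuting the $a_2$ blocks of size~$2$.

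Finally, $C$ is the quotient $D/(A \times B)$, which has order at most $2$ by Proposition~\ref{pro:C}. In case~(a) all odd blocks interact only within $A_1$, the index-$2$ loss is not compensated by any element of $D$, and so $C = 1$. In case~(b), the presence of both an odd part $\lambda_r > 1$ and $a_1 > 1$ singletons allows the element $z'_{rl}$ to pair the odd block of size $\lambda_r$ with a singleton, supplying the missing order-$2$ generator; hence $C = \langle z'_{rl}\rangle$. The main obstacle is the careful combinatorial verification that the candidate generators generate precisely the index-$2$ subgroup of the naive Coxeter product $\prod_k W(\cB_{a_k})$ that is trapped in $W(\cD_n)$, and that the resulting ``sign defect'' is absorbed exactly by $C$. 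The action statements on $X^{\perp}$, $X \cap Y$ and $Y^{\perp}$ then follow from the explicit $(-1)$-eigenspace description of each generating involution, exactly as in the proofs of Propositions~\ref{pro:Bn} and~\ref{pro:D1}.
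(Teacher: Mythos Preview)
Your proposal is correct and follows essentially the same idea as the paper, though you unpack it in considerably more detail. The paper's proof is a two-line reduction: regard $P$ as a parabolic of $W(\cB_n)$, invoke Proposition~\ref{pro:Bn} to get the normalizer $N_B$ there explicitly, and then observe that $N_W(P) = N_B \cap W(\cD_n)$ is the index-$2$ subgroup of $N_B$ with the listed generators. Your bottom-up treatment---identifying $Q$, $A$, $B$, $C$ separately via Propositions~\ref{pro:A} and~\ref{pro:B} and checking generator-by-generator which $\cB_n$-complement elements survive the parity constraint---is exactly what this intersection amounts to when spelled out, so the two routes coincide in substance.

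One small caution: your appeal to Proposition~\ref{pro:C} for the bound $|C| \leq 2$ is circular, since in the paper the $\cD_n$ case of that proposition is proved \emph{by citing} Propositions~\ref{pro:D1}--\ref{pro:D3}. You do not actually need that reference: your own argument---that in case~(a) no residual element of the $\cB_n$-complement can be repaired to lie in $W(\cD_n)$, while in case~(b) the single element $z'_{rl}$ supplies the missing sign---already determines $C$ directly.
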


\begin{proof}
  Regarding $P$ as a parabolic subgroup of $W(\cB_n)$, we obtain
  an explicit description of its normalizer $N_{\cB}$ in $W(\cB_n)$
  from Proposition~\ref{pro:B}.  The normalizer of $P$ in $W = W(\cD_n)$
  is the subgroup $N = N_{\cB} \cap W$ of index $2$ in $N_{\cB}$, with
  explicit generators as listed in the statement of this proposition.
\end{proof}

Proposition~\ref{pro:D3} is illustrated for $W(\cD_5)$ and $W(\cD_6)$ in Tables~\ref{tab:d5} and~\ref{tab:d6}, respectively.

We conclude this section with the last remaining step of the proof of
our main theorem~\ref{thm:main}.

\begin{proposition}\label{pro:C}
  With the above notation, $D = (A \times B) \rtimes C$, i.e., $C$ is always
  a complement.
\end{proposition}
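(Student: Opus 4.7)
My plan is to combine the detailed case-by-case analyses of the preceding sections with a computer verification in the exceptional types to realise $C$ as an honest subgroup of $D$. The groundwork was laid in Section~\ref{sec:refine}: the Goursat decomposition of the image $D_X \leq \GO(X \cap Y) \times \GO(Y^\perp)$ identifies $A \times B$ as a normal subgroup of $D$ with quotient $D/(A \times B) \cong A_1/A_2 \cong B_1/B_2$, so producing a complement $C$ amounts to lifting this common quotient group to a subgroup of $D$.

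The classical types are already in hand. Propositions~\ref{pro:An} and~\ref{pro:Bn} show that for $W$ of type $\cA_n$ or $\cB_n$ one has $D = A \times B$, so the trivial subgroup $C = 1$ serves as a complement. For $W$ of type $\cD_n$, Propositions~\ref{pro:D1} and~\ref{pro:D2} treat the parabolics with even label or with fewer than $n-1$ moved points, in both cases again with $C = 1$; Proposition~\ref{pro:D3} handles the remaining labels, producing either $C = 1$ or an explicit involution $C = \Span{z_{rl}'}$ of order~$2$.

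For the exceptional and dihedral types $\cE_6$, $\cE_7$, $\cE_8$, $\cF_4$, $\cH_3$, $\cH_4$, and $\cI_2(m)$, the existence of a complement $C$ of order at most $2$ is verified pair by pair using the tables in Section~\ref{sec:tables}, computed with \textsf{Chevie}~\cite{chevie}. For each listed parabolic subgroup, an explicit generator of $C$ is recorded; one checks directly that this element is an involution lying outside $A \times B$ whose image realises the nontrivial Goursat quotient element on both $X \cap Y$ and $Y^\perp$.

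The hard part will be precisely this exceptional verification: unlike the uniform construction of the lifting element $z_{rl}'$ in Proposition~\ref{pro:D3}, the exceptional cases admit no such pattern and must be checked individually. The delicate step in each instance is to certify that the candidate involution really represents the nontrivial element of the Goursat section, not merely some element outside $A \times B$; once this is confirmed, an order-counting argument closes the claim.
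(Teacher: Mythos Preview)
Your proposal is correct and follows essentially the same route as the paper's proof: the classical types are dispatched via Propositions~\ref{pro:An}, \ref{pro:Bn}, \ref{pro:D1}, \ref{pro:D2}, \ref{pro:D3}, and the exceptional types by inspection of the tables. One simplification you may have missed is that in every exceptional case where $C$ is nontrivial (namely $(\cE_7,\cA_2\cA_1)$, $(\cE_7,\cA_4)$, and $(\cE_8,\cA_4\cA_1)$) one finds $A=B=1$ and $\size{D}=2$, so $D=C$ automatically and no ``delicate'' lifting verification is needed.
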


\begin{proof}
  By Propositions~\ref{pro:A} and \ref{pro:B}, $C$ is trivial if $W$
  is of type $\cA_n$ or $\cB_n$.  By Propositions \ref{pro:D1},
  \ref{pro:D2} and \ref{pro:D3}, $C$ is either trivial or a subgroup
  $\Span{z_{rl}}$ of order $2$ if $W$ is of type $\cD_n$.
  A case-by-case analysis of the exceptional types shows that
  $C$ is trivial unless $W$ has type $\cE_7$ and $P$ has type $\cA_2\cA_1$
  or $\cA_4$ (Table~\ref{tab:e7}), or $W$ has type $\cE_8$ and $P$
  has type $\cA_4\cA_1$ (Table~\ref{tab:e8}).  In the latter cases
  $D = C$ is a subgroup of order $2$.
\end{proof}

\section{Tables}
\label{sec:tables}

In the preceding sections, we have discussed how the groups $P$, $Q$, $A$, $B$ and $C$ act simultaneously on the mutually orthogonal subspaces $X^{\perp}$, $Y^{\perp}$ and $X \cap Y$ of $V$:
\begin{itemize}
\item $P$ acts on $X^{\perp}$, fixing $X = Y^{\perp} \oplus (X \cap Y)$;
\item $Q$ acts on $Y^{\perp}$, fixing $Y = X^{\perp} \oplus (X \cap Y)$;
\item $A$ acts on $X^{\perp} \oplus X \cap Y$, fixing $Y^{\perp}$;
\item $B$ acts on $X^{\perp} \oplus Y^{\perp}$, fixing $X \cap Y$;
\item $C$ acts on $X^{\perp} \oplus X \cap Y \oplus Y^{\perp}$.
\end{itemize}
So, dually,
\begin{itemize}
\item $X^{\perp}$ is a $(P \rtimes ((A \times B) \rtimes C))$-module,
\item $X \cap Y$ is an $(A \rtimes C)$-module, and
\item $Y^{\perp}$ is a $(Q \rtimes (B \rtimes C))$-module.
\end{itemize}

\begin{table}[htp]
  \begin{center}
  \begin{tabular}{r|cc|cc|ccc|ccc}\hline
 & $P$ & $Q$ & $|D|$ & $\overline{PQ}$ & $A$ & $B$ & $C$ & $X^{\perp}$ & $X \cap Y$ & $Y^{\perp} $ \\\hline\hline
 $_{7}$ & ${\cA}_{3}$ &  $_{3}$ & $2$ &  $_{16}$ & & ${\cA}_1$ & &
 {\tiny\tikz[baseline]{\draw[double] (0,0) node {$\bullet$} -- (-0.2,0);\draw (-0.2,0) node[white] {$\bullet$};\node (2) at (-0.2,0) {$\circ$};\draw (0,0) -- (0.2,0) node {$\bullet$};}} &&
 {\tiny\tikz[baseline]{\draw[double] (0,0) node {$\bullet$} -- (-0.2,0);\draw (-0.2,0) node[white] {$\bullet$};\node (2) at (-0.2,0) {$\circ$};}}
 \\\hline
 $*_{3}$ & ${\cA}_1^2$ &  $_{7}$ & $2$ &  $_{16}$ & & ${\cA}_1$ & &
 {\tiny\tikz[baseline]{\draw[double] (0,0) node {$\bullet$} -- (-0.2,0);\draw (-0.2,0) node[white] {$\bullet$};\node (2) at (-0.2,0) {$\circ$};}} &&
 {\tiny\tikz[baseline]{\draw[double] (0,0) node {$\bullet$} -- (-0.2,0);\draw (-0.2,0) node[white] {$\bullet$};\node (2) at (-0.2,0) {$\circ$};\draw (0,0) -- (0.2,0) node {$\bullet$};}}
 \\\hline\hline
 $_{6}$ & ${\cA}_2 {\cA}_1$ &  $_{4}$ & $1$ &  $_{(13)}$ & & & &
 {\tiny\tikz[baseline]{\draw (0,0) node {$\bullet$} -- (-0.2,0) node {$\bullet$};\draw (0.3,0) node {$\bullet$};}} &&
 {\tiny\tikz[baseline]{\draw (0,0) node {$\bullet$} -- (-0.2,0) node {$\bullet$};}}
 \\
 $_{9}$ & ${\cA}_2^2$ &  $_{4}$ & $2$ & &  & ${\cA}_1$ & &
 {\tiny$^2($\tikz[baseline]{\draw (0,0) node {$\bullet$} -- (-0.2,0) node {$\bullet$};\draw (0.3,0) node {$\bullet$} -- (0.5,0) node {$\bullet$};}$)$} &&
 \smash{\tiny\tikz[baseline]{\draw (0,0) node {$\bullet$} -- node[above] {$_6$} (-0.2,0);\draw (-0.2,0) node[white] {$\bullet$};\node (2) at (-0.2,0) {$\circ$};}}
 \\\hline
 $_{4}$ & ${\cA}_{2}$ &  $_{9}$ & $2$ & &  & ${\cA}_1$ & &
 \smash{\tiny\tikz[baseline]{\draw (0,0) node {$\bullet$} -- node[above] {$_6$} (-0.2,0);\draw (-0.2,0) node[white] {$\bullet$};\node (2) at (-0.2,0) {$\circ$};}} &&
 {\tiny$^2($\tikz[baseline]{\draw (0,0) node {$\bullet$} -- (-0.2,0) node {$\bullet$};\draw (0.3,0) node {$\bullet$} -- (0.5,0) node {$\bullet$};}$)$}
 \\\hline\hline
 $*_{5}$ & ${\cA}_1^3$ &  $_{2}$ & $6$ & $_{12}$ & ${\cA}_2$ & & &
 {\tiny\tikz[baseline]{\draw (0,0) node[white] {$\bullet$} -- (0.2,0) node[white] {$\bullet$};\draw[double] (0.2,0) node[white] {$\bullet$} -- (0.4,0) node {$\bullet$};\draw (0,0) node {$\circ$} (0.2,0) node {$\circ$};}} &
 {\tiny\tikz[baseline]{\draw (0,0) node[white] {$\bullet$} -- (0.2,0) node[white] {$\bullet$};\draw (0,0) node {$\circ$} (0.2,0) node {$\circ$};}} &
 {\tiny\tikz[baseline]{\draw (0,0) node {$\bullet$};}}
 \\
 $_{10}$ & ${\cA}_3 {\cA}_1$ &  $_{2}$ & $1$ & $_{16}$ & & & &
 {\tiny\tikz[baseline]{\draw (0,0) node {$\bullet$} -- (0.2,0) node {$\bullet$} -- (0.4,0) node {$\bullet$} (0.7,0) node {$\bullet$};}} &
 {} &
 {\tiny\tikz[baseline]{\draw (0,0) node {$\bullet$};}}
 \\
 $_{11}$ & ${\cA}_{4}$ &  $_{2}$ & $1$ &  $_{(14)}$ & & & &
 {\tiny\tikz[baseline]{\draw (0,0) node {$\bullet$} -- (0.2,0) node {$\bullet$} -- (0.4,0) node {$\bullet$} -- (0.6,0) node {$\bullet$};}} &
 {} &
 {\tiny\tikz[baseline]{\draw (0,0) node {$\bullet$};}}
 \\
 $_{15}$ & ${\cA}_{5}$ &  $_{2}$ & $1$ & &  & & &
 {\tiny\tikz[baseline]{\draw (0,0) node {$\bullet$} -- (0.2,0) node {$\bullet$} -- (0.4,0) node {$\bullet$} -- (0.6,0) node {$\bullet$} -- (0.8,0) node {$\bullet$};}} &
 {} &
 {\tiny\tikz[baseline]{\draw (0,0) node {$\bullet$};}}
 \\\hline
 $*_{2}$ & ${\cA}_{1}$ &  $_{15}$ & $1$ & &  & & &
 {\tiny\tikz[baseline]{\draw (0,0) node {$\bullet$};}} &
 {} &
 {\tiny\tikz[baseline]{\draw (0,0) node {$\bullet$} -- (0.2,0) node {$\bullet$} -- (0.4,0) node {$\bullet$} -- (0.6,0) node {$\bullet$} -- (0.8,0) node {$\bullet$};}}
 \\\hline\hline
 $_{8}$ & ${\cA}_2 {\cA}_1^2$ & $\emptyset$ & $2$ &   $_{(8)}$ & ${\cA}_1$ & & &
 {\tiny$^2($\tikz[baseline]{\draw (0,0) node {$\bullet$} -- (0.2,0) node {$\bullet$} (0.5,0) node {$\bullet$} (0.8,0) node {$\bullet$};}$)$} &
 {\tiny\tikz[baseline]{\draw (0,0) node{$\circ$};}} &
 {}
 \\
 $*_{12}$ & ${\cD}_{4}$ & $\emptyset$ & $6$ &   $_{(12)}$ & ${\cA}_2$ & & &
 {\tiny\tikz[baseline]{\draw (0,0) node[white] {$\bullet$} -- (0.2,0) node[white] {$\bullet$};\draw[double] (0.2,0) node[white] {$\bullet$} -- (0.4,0) node {$\bullet$};\draw(0.4,0) -- (0.6,0) node {$\bullet$};\draw (0,0) node {$\circ$} (0.2,0) node {$\circ$};}} &
 {\tiny\tikz[baseline]{\draw (0,0) node[white] {$\bullet$} -- (0.2,0) node[white] {$\bullet$};\draw (0,0) node {$\circ$} (0.2,0) node {$\circ$};}} &
 {}
 \\
 $_{13}$ & ${\cA}_2^2 {\cA}_1$ & $\emptyset$ & $2$ &   $_{(13)}$ & ${\cA}_1$ & & &
 {\tiny$^2($\tikz[baseline]{\draw (0,0) node {$\bullet$} -- (0.2,0) node {$\bullet$} (0.5,0) node {$\bullet$} -- (0.7,0) node {$\bullet$} (1,0) node {$\bullet$};}$)$} &
 {\tiny\tikz[baseline]{\draw (0,0) node{$\circ$};}} &
 {}
 \\
 $_{14}$ & ${\cA}_4 {\cA}_1$ & $\emptyset$ & $1$ &   $_{(14)}$ & & & &
 {\tiny\tikz[baseline]{\draw (0.2,0) node {$\bullet$} -- (0.4,0) node {$\bullet$} -- (0.6,0) node {$\bullet$} -- (0.8,0) node {$\bullet$} (1.1,0) node {$\bullet$};}} &
 {} & {}
 \\
 $_{16}$ & ${\cD}_{5}$ & $\emptyset$ & $1$ &   $_{(16)}$ & & & &
 \smash{\tiny\tikz[baseline]{\draw (0.2,0) node {$\bullet$} -- (0.4,0) node {$\bullet$} -- (0.6,0) node {$\bullet$} -- (0.8,0) node {$\bullet$};\draw (0.4,0) -- (0.4,0.2) node {$\bullet$};}} &
 {} & {}
 \\
 $_{17}$ & ${\cE}_{6}$ & $\emptyset$ & $1$ &   & & & &
 \smash{\tiny\tikz[baseline]{\draw (0,0) node {$\bullet$} -- (0.2,0) node {$\bullet$} -- (0.4,0) node {$\bullet$} -- (0.6,0) node {$\bullet$} -- (0.8,0) node {$\bullet$};\draw (0.4,0) -- (0.4,0.2) node {$\bullet$};}} &
 {} & {}
 \\\hline
 $*_{1}$ & $\emptyset$ &  $_{17}$ & $1$ &   & & & &
 {} & {} &
 \smash{\tiny\tikz[baseline]{\draw (0,0) node {$\bullet$} -- (0.2,0) node {$\bullet$} -- (0.4,0) node {$\bullet$} -- (0.6,0) node {$\bullet$} -- (0.8,0) node {$\bullet$};\draw (0.4,0) -- (0.4,0.2) node {$\bullet$};}}
 \\\hline
  \end{tabular}
\end{center}

  \caption{Decomposition~\eqref{eq:decompose} for the case of $\cE_6$.}\label{tab:e6}
\end{table}

\begin{table}[htp]
  \input{e7compact}
  \caption{Decomposition~\eqref{eq:decompose} for the case of $\cE_7$.}\label{tab:e7}
\end{table}

\begin{table}[htp]
  \input{e8compact}
  \caption{Decomposition~\eqref{eq:decompose} for the case of $\cE_8$.}\label{tab:e8}
\end{table}

\begin{table}[htp]
  \begin{center}
  \begin{tabular}{r|cc|cc|ccc|ccc}\hline
    & $P$ & $Q$ & $|D|$ & $\overline{PQ}$ & $A$ & $B$ & $C$ & $X^{\perp}$ & $X \cap Y$ & $Y^{\perp}$ \\\hline\hline
    $*_{4}$ & ${\cA}_1^2$ &  $_{4}$ & $1$ &  & & & &
    {\tiny\tikz[baseline]{\draw (0,0) node {$\bullet$} (0.3,0) node {$\bullet$};}} &
    {} &
    {\tiny\tikz[baseline]{\draw (0,0) node {$\bullet$} (0.3,0) node {$\bullet$};}}
    \\
    \hline
    \hline
    $_{5}$ & ${\cA}_{2}'$ &  $_{6}$ & $2$ &  & & ${\cA}_1$ & &
    {\tiny\tikz[baseline]{\draw (0,0) node[white] {$\bullet$} -- node[above] {$_6$} (0.2,0) node {$\bullet$};\draw (0,0) node {$\circ$};}} &
    {} &                                                                            {\tiny\tikz[baseline]{\draw (0,0) node[white] {$\bullet$} -- node[above] {$_6$} (0.2,0) node {$\bullet$};\draw (0,0) node {$\circ$};}}
    \\\hline
    $_{6}$ & ${\cA}_{2}''$ &  $_{5}$ & $2$ &  & & ${\cA}_1$ & &
    {\tiny\tikz[baseline]{\draw (0,0) node[white] {$\bullet$} -- node[above] {$_6$} (0.2,0) node {$\bullet$};\draw (0,0) node {$\circ$};}} &
    {} &                                                                            {\tiny\tikz[baseline]{\draw (0,0) node[white] {$\bullet$} -- node[above] {$_6$} (0.2,0) node {$\bullet$};\draw (0,0) node {$\circ$};}}
    \\\hline\hline
    $*_{7}$ & ${\cB}_{2}$ &  $_{7}$ & $1$ &  & & & &
  {\tiny\tikz[baseline]{\draw[double] (0,0) node {$\bullet$} -- (0.2,0) node {$\bullet$};}} &
  {} &
  {\tiny\tikz[baseline]{\draw[double] (0,0) node {$\bullet$} -- (0.2,0) node {$\bullet$};}}
    \\\hline\hline
    $*_{10}$ & ${\cB}_{3}'$ &  $_{3}$ & $1$ &  & & & &
  {\tiny\tikz[baseline]{\draw[double] (0,0) node {$\bullet$} -- (0.2,0) node {$\bullet$};\draw (0.2,0) -- (0.4,0) node {$\bullet$};}} &
  {} &
  {\tiny\tikz[baseline]{\draw (0,0) node {$\bullet$};}}
    \\\hline
    $*_{3}$ & ${\cA}_{1}''$ &  $_{10}$ & $1$ &  & & & &
  {\tiny\tikz[baseline]{\draw (0,0) node {$\bullet$};}} &
  {} &
  {\tiny\tikz[baseline]{\draw[double] (0,0) node {$\bullet$} -- (0.2,0) node {$\bullet$};\draw (0.2,0) -- (0.4,0) node {$\bullet$};}}
    \\\hline\hline
    $*_{11}$ & ${\cB}_{3}''$ &  $_{2}$ & $1$ &  & & & &
  {\tiny\tikz[baseline]{\draw[double] (0,0) node {$\bullet$} -- (0.2,0) node {$\bullet$};\draw (0.2,0) -- (0.4,0) node {$\bullet$};}} &
  {} &
  {\tiny\tikz[baseline]{\draw (0,0) node {$\bullet$};}}
    \\\hline
    $*_{2}$ & ${\cA}_{1}'$ &  $_{11}$ & $1$ &  & & & &
  {\tiny\tikz[baseline]{\draw (0,0) node {$\bullet$};}} &
  {} &
  {\tiny\tikz[baseline]{\draw[double] (0,0) node {$\bullet$} -- (0.2,0) node {$\bullet$};\draw (0.2,0) -- (0.4,0) node {$\bullet$};}}
    \\\hline\hline
    $_{8}$ & $({\cA}_2 {\cA}_1)'$ & $\emptyset$ & $2$ &  $_{(8)}$ & ${\cA}_1$ & & &
  {\tiny\tikz[baseline]{\draw (0,0) node[white] {$\bullet$} -- node[above] {$_6$} (0.2,0) node {$\bullet$} (0.5,0) node {$\bullet$};\draw (0,0) node {$\circ$};}} &
  {\tiny\tikz[baseline]{\draw (0,0) node {$\circ$};}} &
  {}
    \\
    $_{9}$ & $({\cA}_2 {\cA}_1)''$ & $\emptyset$ & $2$ &  $_{(9)}$ & ${\cA}_1$ & & &
  {\tiny\tikz[baseline]{\draw (0,0) node[white] {$\bullet$} -- node[above] {$_6$} (0.2,0) node {$\bullet$} (0.5,0) node {$\bullet$};\draw (0,0) node {$\circ$};}} &
  {\tiny\tikz[baseline]{\draw (0,0) node {$\circ$};}} &
  {}
    \\
    $*_{12}$ & ${\cF}_{4}$ & $\emptyset$ & $1$ &  & & & &
    {\tiny\tikz[baseline]{\draw (0,0) node {$\bullet$} -- (0.2,0);\draw[double] (0.2,0) node {$\bullet$} -- (0.4,0) node {$\bullet$};\draw(0.4,0) -- (0.6,0) node {$\bullet$}}} &
    {} &
    {}
    \\
    \hline
    $*_{1}$ & $\emptyset$ &  $_{12}$ & $1$ &  & & & &
    {} &
    {} &
    {\tiny\tikz[baseline]{\draw (0,0) node {$\bullet$} -- (0.2,0);\draw[double] (0.2,0) node {$\bullet$} -- (0.4,0) node {$\bullet$};\draw(0.4,0) -- (0.6,0) node {$\bullet$}}}
    \\
    \hline
  \end{tabular}
\end{center}


  \caption{Decomposition~\eqref{eq:decompose} for the case of $\cF_4$.}\label{tab:f4}
\end{table}

\begin{table}[htp]
  \begin{center}
  \begin{tabular}{r|cc|cc|ccc|ccc}\hline
    & $P$ & $Q$ & $|D|$ & $\overline{PQ}$ & $A$ & $B$ & $C$ & $X^{\perp}$ & $X \cap Y$ & $Y^{\perp}$ \\\hline\hline
    $*_{3}$ & ${\cA}_1^2$ &  $_{2}$ & $1$ &  & & & &
 {\tiny\tikz[baseline]{\draw (0,0) node {$\bullet$} (0.3,0) node {$\bullet$};}} &
 {}  &
 {\tiny\tikz[baseline]{\draw (0,0) node {$\bullet$};}}
    \\\hline
    $*_{2}$ & ${\cA}_{1}$ &  $_{3}$ & $1$ &  & & & &
 {\tiny\tikz[baseline]{\draw (0,0) node {$\bullet$};}} &
 {}  &
 {\tiny\tikz[baseline]{\draw (0,0) node {$\bullet$} (0.3,0) node {$\bullet$};}}
 \\
    \hline\hline
    $_{4}$ & ${\cA}_{2}$ & $\emptyset$ & $2$ & $_{(4)}$ & ${\cA}_1$ & & &
    \smash{\tiny\tikz[baseline]{\draw (0,0) node[white] {$\bullet$} -- node[above] {$_6$} (0.2,0) node {$\bullet$};\draw (0,0) node {$\circ$};}} &
    {\tiny\tikz[baseline]{\draw (0,0) node {$\circ$};}} &
\\
    $_{5}$ & ${\cH}_{2}$ & $\emptyset$ & $2$ & $_{(5)}$ & ${\cA}_1$ & & &
    \smash{\tiny\tikz[baseline]{\draw (0,0) node[white] {$\bullet$} -- node[above] {$_{10}$} (0.2,0) node {$\bullet$};\draw (0,0) node {$\circ$};}} &
    {\tiny\tikz[baseline]{\draw (0,0) node {$\circ$};}} &
    \\
    $*_{6}$ & ${\cH}_{3}$ & $\emptyset$ & $1$ &  & & & &
 \smash{\tiny\tikz[baseline]{\draw (0,0) node {$\bullet$} -- node[above] {$_{5}$} (0.2,0) node {$\bullet$} -- (0.4,0) node {$\bullet$};}} &
 & \\\hline
    $*_{1}$ & $\emptyset$ &  $_{6}$ & $1$ &  & & & &   & &
 \smash{\tiny\tikz[baseline]{\draw (0,0) node {$\bullet$} -- node[above] {$_{5}$} (0.2,0) node {$\bullet$} -- (0.4,0) node {$\bullet$};}}
    \\\hline
  \end{tabular}
\end{center}


  \caption{Decomposition~\eqref{eq:decompose} for the case of $\cH_3$.}\label{tab:h3}
\end{table}

\begin{table}[htp]
  \begin{center}
  \begin{tabular}{r|cc|cc|ccc|ccc}\hline
    & $P$ & $Q$ & $|D|$ & $\overline{PQ}$ & $A$ & $B$ & $C$ & $X^{\perp}$ & $X \cap Y$ & $Y^{\perp}$ \\\hline\hline
    $*_{3}$ & ${\cA}_1^2$ &  $_{3}$ & $2$ &  & & ${\cA}_1$ & &
 {\tiny\tikz[baseline]{\draw[double] (0,0) node {$\bullet$} -- (-0.2,0) node[white] {$\bullet$} node {$\circ$};}} &
 &
 {\tiny\tikz[baseline]{\draw[double] (0,0) node {$\bullet$} -- (-0.2,0) node[white] {$\bullet$} node {$\circ$};}}
    \\\hline\hline
    $_{4}$ & ${\cA}_{2}$ &  $_{4}$ & $2$ &  & & ${\cA}_1$ & &
 \smash{\tiny\tikz[baseline]{\draw (0,0) node[white] {$\bullet$} node {$\circ$} -- node[above] {$_6$} (0.2,0) node {$\bullet$};}} &
 {} &
 \smash{\tiny\tikz[baseline]{\draw (0,0) node[white] {$\bullet$} node {$\circ$} -- node[above] {$_6$} (0.2,0) node {$\bullet$};}}
    \\\hline\hline
    $_{5}$ & ${\cH}_{2}$ &  $_{5}$ & $2$ &  & & ${\cA}_1$ & &
 \smash{\tiny\tikz[baseline]{\draw (0,0) node[white] {$\bullet$} node {$\circ$} -- node[above] {$_{10}$} (0.2,0) node {$\bullet$};}} &
 {} &
 \smash{\tiny\tikz[baseline]{\draw (0,0) node[white] {$\bullet$} node {$\circ$} -- node[above] {$_{10}$} (0.2,0) node {$\bullet$};}}
    \\\hline\hline
    $*_{9}$ & ${\cH}_{3}$ &  $_{2}$ & $1$ &  & & & &
  \smash{\tiny\tikz[baseline]{\draw (0,0) node {$\bullet$} -- node[above] {$_{5}$} (0.2,0) node {$\bullet$} -- (0.4,0) node {$\bullet$};}} &
 {} &
 {\tiny\tikz[baseline]{\draw (0,0) node {$\bullet$};}}
    \\\hline
    $*_{2}$ & ${\cA}_{1}$ &  $_{9}$ & $1$ &  & & & &
 {\tiny\tikz[baseline]{\draw (0,0) node {$\bullet$};}} &
 {} &
  \smash{\tiny\tikz[baseline]{\draw (0,0) node {$\bullet$} -- node[above] {$_{5}$} (0.2,0) node {$\bullet$} -- (0.4,0) node {$\bullet$};}}
    \\\hline\hline
    $_{6}$ & ${\cA}_2 {\cA}_1$ & $\emptyset$ & $2$ & $_{(6)}$ & ${\cA}_1$ & & &
 \smash{\tiny\tikz[baseline]{\draw (0,0) node[white] {$\bullet$} node {$\circ$} -- node[above] {$_6$} (0.2,0) node {$\bullet$} (0.5,0) node {$\bullet$};}} &
 {\tiny\tikz[baseline]{\draw (0,0) node {$\circ$};}} &
 {}
    \\
    $_{7}$ & ${\cH}_2 {\cA}_1$ & $\emptyset$ & $2$ & $_{(7)}$ & ${\cA}_1$ & & &
 \smash{\tiny\tikz[baseline]{\draw (0,0) node[white] {$\bullet$} node {$\circ$} -- node[above] {$_{10}$} (0.2,0) node {$\bullet$} (0.5,0) node {$\bullet$};}} &
 {\tiny\tikz[baseline]{\draw (0,0) node {$\circ$};}} &
 {}
    \\
    $_{8}$ & ${\cA}_{3}$ & $\emptyset$ & $2$ & $_{(8)}$ & ${\cA}_1$ & & &
 {\tiny\tikz[baseline]{\draw[double] (0,0) node {$\bullet$} -- (-0.2,0) node[white] {$\bullet$} node {$\circ$};\draw (0,0) -- (0.2,0) node {$\bullet$};}} &
 {\tiny\tikz[baseline]{\draw (0,0) node {$\circ$};}} &
 {}
    \\
    $*_{10}$ & ${\cH}_{4}$ & $\emptyset$ & $1$ &  & & & &
  \smash{\tiny\tikz[baseline]{\draw (0,0) node {$\bullet$} -- node[above] {$_{5}$} (0.2,0) node {$\bullet$} -- (0.4,0) node {$\bullet$} -- (0.6,0) node {$\bullet$};}} &
    \\\hline
    $*_{1}$ & $\emptyset$ &  $_{10}$ & $1$ &  & & & &
 {} &
 {} &
  \smash{\tiny\tikz[baseline]{\draw (0,0) node {$\bullet$} -- node[above] {$_{5}$} (0.2,0) node {$\bullet$} -- (0.4,0) node {$\bullet$} -- (0.6,0) node {$\bullet$};}}
    \\\hline
  \end{tabular}
\end{center}


  \caption{Decomposition~\eqref{eq:decompose} for the case of $\cH_4$.}\label{tab:h4}
\end{table}

\begin{table}[htp]
  \begin{center}
\begin{tabular}{r|cc|cc|ccc|ccc}\hline
& $P$ & $Q$ & $|D|$ & $\overline{PQ}$ & $A$ & $B$ & $C$ & $X^{\perp}$ & $X \cap Y$ & $Y^{\perp}$ \\\hline\hline
  $*_{2}$ & $\cA_{1}$ & $\emptyset$ & $1$ & $_{(2)}$ & & & &
 {\tiny\tikz[baseline]{\draw (0,0) node {$\bullet$};}} &
 & \\
  $_{3}$ & $\cI_{2}(m)$ & $\emptyset$ & $1$ &  &  & & &
  \smash{\tiny\tikz[baseline]{\draw (0,0) node {$\bullet$} -- node[above] {$_m$} (0.2,0) node {$\bullet$};}} &
 & \\\hline
  $*_{1}$ & $\emptyset$ &  $_{3}$ & $1$ & &  & & & & &
  \smash{\tiny\tikz[baseline]{\draw (0,0) node {$\bullet$} -- node[above] {$_m$} (0.2,0) node {$\bullet$};}} \\\hline
\end{tabular}
\end{center}


  \caption{Decomposition~\eqref{eq:decompose} for the case of $\cI_2(m)$, $m$ odd.}\label{tab:i2o}
\end{table}

\begin{table}[htp]
  \begin{center}
\begin{tabular}{r|cc|cc|ccc|ccc}\hline
& $P$ & $Q$ & $|D|$ & $\overline{PQ}$ & $A$ & $B$ & $C$ & $X^{\perp}$ & $X \cap Y$ & $Y^{\perp}$ \\\hline\hline
  $*_{2}$ & $\cA_{1}'$ &  $_{2}$ & $1$ & &  & & &
 {\tiny\tikz[baseline]{\draw (0,0) node {$\bullet$};}} &
 &
 {\tiny\tikz[baseline]{\draw (0,0) node {$\bullet$};}} \\\hline\hline
  $*_{3}$ & $\cA_{1}''$ &  $_{3}$ & $1$ & &  & & &
 {\tiny\tikz[baseline]{\draw (0,0) node {$\bullet$};}} &
 &
 {\tiny\tikz[baseline]{\draw (0,0) node {$\bullet$};}} \\\hline\hline
  $*_{4}$ & $\cI_{2}(m)$ & $\emptyset$ & $1$ & &  & & &
 \smash{\tiny\tikz[baseline]{\draw (0,0) node {$\bullet$} -- node[above] {$_m$} (0.2,0) node {$\bullet$};}} &
& \\\hline
  $*_{1}$ & $\emptyset$ &  $_{4}$ & $1$ & &  & & & & &
    \smash{\tiny\tikz[baseline]{\draw (0,0) node {$\bullet$} -- node[above] {$_m$} (0.2,0) node {$\bullet$};}} \\\hline
\end{tabular}
\end{center}


  \caption{Decomposition~\eqref{eq:decompose} for the case of $\cI_2(m)$, $m \equiv 0 \pmod 4$;
    for $m \equiv 2 \pmod 4$ the $Q$ column entries of classes $\cA_1'$ and $\cA_1''$ are swapped.}\label{tab:i2e}
\end{table}

The tables in this paper contain explicit decompositions of the normalizer
complements of the parabolic subgroups of the Coxeter groups of
types
$\cA_7$ (Table~\ref{tab:a7}),
$\cB_5$ (Table~\ref{tab:b5}),
$\cB_6$ (Table~\ref{tab:b6}),
$\cD_5$ (Table~\ref{tab:d5}),
$\cD_6$ (Table~\ref{tab:d6}),
$\cE_6$ (Table~\ref{tab:e6}),
$\cE_7$ (Table~\ref{tab:e7}),
$\cE_8$ (Table~\ref{tab:e8}),
$\cF_4$ (Table~\ref{tab:f4}),
$\cH_3$ (Table~\ref{tab:h3}),
$\cH_4$ (Table~\ref{tab:h4}), and
$\cI_2$ (Tables~\ref{tab:i2o} and ~\ref{tab:i2e}).

\subsection{How to Read the Tables.}\label{sec:howto}
Here
the conjugacy classes of parabolic subgroups are
grouped by their \galois\ closure: the \galoisy\ closed parabolic subgroups
are exactly the bottom entries in each group.  Pairs of such groups
correspond to \galois\ complements, concepts are separated by double lines.
The columns contain the following information.
\begin{itemize}
\item The first column gives the position of the class in the complete
  list of conjugacy classes of parabolic subgroups of $W$ (see~\cite[Appendix A]{GePf2000}).  An asterisk
  ($*$) indicates that this parabolic's normalizer is an involution
  centralizer.
\item Column $P$ gives the Coxeter type of a parabolic subgroup $P$ in
  this class, and labeling partition $\lambda$ if $W$ is of classical
  type. Here $\emptyset$ stands for the trivial group.  Different
  classes with the same type are distinguished by primes (${}'$ and ${}''$).
\item Column $Q$ gives the class of the \galois\ complement of $P$
  in terms of its position (as in column $1$).  Note that all classes in the same group
  have the same entry in column $Q$.
\item The size of the complement $D$ is given in column $\size{D}$.
\item Column $\overline{PQ}$ gives the class of the parabolic closure
  $\overline{PQ}$, again in terms of its position (as in column $1$), unless
  $\overline{PQ} = W$.  The position is given in parentheses if
  $\overline{PQ} = PQ$.
\item Columns $A$, $B$ and $C$ contain the Coxeter type of the groups
  $A$, $B$ and $C$, with occasional footnotes on their action as
  reflection groups.
\item In columns $X^{\perp}$, $X \cap Y$ and $Y^{\perp}$, the action
  of $N_W(P)$ as a reflection group on these spaces is described in
  the form of a Coxeter diagram, with white nodes ($\circ$) for
  reflections in $A$, $B$ or $C$, and black nodes ($\bullet$) for
  reflections in $P$ or $Q$.  Left superscripts are used to indicate
  when a group does not act as a reflection group, it then usually
  acts as a permutation group on the Coxeter diagram.
\end{itemize}

\subsection{Idiosyncrasies.}\label{sec:idios}
In most cases, the reflection action of $D$, or its subgroups $A$,
$B$, and $C$, suggests a descriptive name as entry in the table.
In case of a conflict, we have used the action on $X \cap Y$ as a name.
The cases which need additional explanations are marked
with the symbols ${\heartsuit}$, ${\diamondsuit}$, ${\clubsuit}$, ${\spadesuit}$.  Specifically:
\begin{itemize}
\item $\cA_1^{\heartsuit}$ stands for a group of order $2$ that does
  not act as a reflection group, on either of the spaces
  $X^{\perp}$, $X \cap Y$ or $Y^{\perp}$ --- it acts as scalar $-1$ on $X \cap Y$.
\item ${{\cA}_1^2}^{\clubsuit}$ acts as the same Coxeter type in
  different ways on two spaces --- only one element acts as a
  reflection on both $X^{\perp}$ and $X \cap Y$.
\item ${\cG}_{2}^{\spadesuit}$ acts as ${\cG}_{2}$ on $X \cap Y$ and
  as ${\cA}_2 {\cA}_1$ on $X^{\perp}$.
\item ${\cA}_{1}^{\diamondsuit}$ does not by itself act as a
  reflection group on $X^{\perp}$ --- but the group $AB$ of type
  $\cA_1^2$ does.
\item ${\cB}_{2}^{\spadesuit}$ acts as a graph automorphism and
  reflections --- only its normal subgroup ${\cA}_1^2$ acts as a
  reflection group.
\item ${\cB}_{2}^{\heartsuit}$ acts as a group $A$ of graph
  automorphisms on $X^{\perp}$.
\end{itemize}

\section{Observations}
\label{sec:afterthoughts}

Here we collect some observations that are made in hindsight,
mostly case by case, and study some special cases like
involution centralizers.

\subsection{Special case $\ccc{P} = W$.}
\label{sec:closureW}
If the \galois\ closure of $P$ is $W$ then obviously $Q = 1$.
Therefore $Y^{\perp} = 0$, $D = A$ and $B = C = 1$.  case by case it
turns out that, for any parabolic subgroup $P$, the \galois\ closure
of $\overline{PQ}$ is $W$.  It would be interesting to see a general
argument for this property.

\subsection{Special case $\ccc{P} = P$.}  If $P$ is \galoisy\ closed
then $A = 1$, by Proposition~\ref{pro:A}.  It also follows that $D$
acts faithfully as graph automorphisms on both $P$ and $Q$. A
case-by-case analysis shows that moreover $C = 1$ and thus $D = B$
acts trivially on $X \cap Y$.  A general argument for this property
would be useful.

\subsection{Central Longest Element.}
If the longest element $w_0$ is central in $W$ then $\ccc{P} = P$
implies that $PQ$ is a maximal rank reflection subgroup of $W$, i.e.,
that $\overline{PQ} = W$. It would be helpful to have a general
argument for this property.

\subsection{Involution Centralizers.}\label{sec:involution}

If $u \in W$ is an involution, i.e., $u^2 = 1$, then
its centralizer is
the normalizer of a parabolic subgroup,
\begin{align*}
  C_W(u) = N_W(P),
\end{align*}
where $P = Z_W(X)$ for $X = \Fix_V(u)$.
Serre~\cite{Serre2023}\footnote{In the published version, the numbering of theorems etc.\ was altered from the original, rendering internal references incorrect. The original numbering can be found in \href{https://arxiv.org/abs/2203.09979}{arXiv:2203.09979}.} has recently studied involution centralizers as
subdirect products in light of Goursat's Lemma.
The involution centralizers in the tables of parabolic subgroups are marked
with an asterisk ($*$).

If $-1 \in W$ then $-u \in W$ is also an involution with
$\Fix_V(-u) = X^{\perp}$, and the corresponding parabolic subgroups
$P = Z_W(X)$ and $Q = Z_W(X^{\perp})$ are each other's \galois\
complements.  It follows that in this case all parabolic subgroups
whose normalizers are involution centralizers are \galoisy\
closed.  In fact, if $W$ is of type $\cB_n$ then
the parabolic subgroup $P$ of $W$ is \galoisy\ closed
\emph{if and only if}
its normalizer
$N_W(P)$ is an involution centralizer.

Furthermore, the following properties can be observed, case by case.
\begin{itemize}
\item Up to conjugacy, in each set of parabolic subgroups $P$ with the
  same \galois\ closure $\ccc{P}$, there is \emph{at most one} whose
  normalizer is an involution centralizer.
\item Serre~\cite{Serre2023} observes, case by case, that if
  $N_W(P) = PQD$ is an involution centralizer, the subgroup $D$ is
  generated by \emph{involutions of degree $2$}; it follows from
  Lemma~\ref{la:theta0} that $D$ acts as a
  reflection group on both $X$ and $X^{\perp}$.
\end{itemize}
Again it would be desirable to see case-free justifications of these properties.

\bigskip
\textbf{Acknowledgements:} Work on this paper began during a visit to the Mathematisches
Forschungsinstitut Oberwolfach under the Oberwolfach Research Fellows Programme; we
thank them for their support. J.M. Douglass would like to acknowledge that some of this material
is based upon work supported by, and while serving at, the National Science Foundation. Any
opinion, findings, and conclusions or recommendations expressed in this material are those
of the authors and do not necessarily reflect the views of the National Science Foundation.
We thank the anonymous referees for helpful comments.

\bibliographystyle{amsplain}
\bibliography{gournorm}
\end{document}